\let\OLDthebibliography\thebibliography
\renewcommand\thebibliography[1]{
  \OLDthebibliography{#1}
  \setlength{\parskip}{3pt}
  \setlength{\itemsep}{0pt plus 0.3ex}
}
\def\numberlikeadb{\global\def\theequation{\thesection.\arabic{equation}}}
\newtheorem{theorem}{Theorem}[section]
\newtheorem{lemma}[theorem]{Lemma}
\newtheorem{corollary}[theorem]{Corollary}
\newtheorem{proposition}[theorem]{Proposition}
\newtheorem{remark}[theorem]{Remark}
\newtheorem{example}[theorem]{Example}
\begin{document} 


\title{Stein factors for variance-gamma approximation in the Wasserstein and Kolmogorov distances}

\author{Robert E. Gaunt\footnote{Department of Mathematics, The University of Manchester, Oxford Road, Manchester M13 9PL, UK. Email: robert.gaunt@manchester.ac.uk}}

\date{} 
\maketitle

\vspace{-5mm}

\begin{abstract}
We obtain new bounds for the solution of the variance-gamma (VG) Stein equation that are of the correct form for approximations in terms of the Wasserstein and Kolmorogorov metrics.  These bounds hold for all parameters values of the four parameter VG class.  As an application we obtain explicit Wasserstein and Kolmogorov distance error bounds in a six moment theorem for VG approximation of double Wiener-It\^{o} integrals.
\end{abstract}

\noindent{{\bf{Keywords:}}} Stein's method; variance-gamma approximation; Stein factors; Wasserstein distance; Kolmorogorov distance

\noindent{{{\bf{AMS 2010 Subject Classification:}}} Primary 60F05; 62E17

\section{Introduction} 

The variance-gamma (VG) distribution with parameters $r > 0$, $\theta \in \mathbb{R}$, $\sigma >0$, $\mu \in \mathbb{R}$ has probability density function
\begin{equation}\label{vgdefn}p(x) = \frac{1}{\sigma\sqrt{\pi} \Gamma(\frac{r}{2})} \mathrm{e}^{\frac{\theta}{\sigma^2} (x-\mu)} \bigg(\frac{|x-\mu|}{2\sqrt{\theta^2 +  \sigma^2}}\bigg)^{\frac{r-1}{2}} K_{\frac{r-1}{2}}\bigg(\frac{\sqrt{\theta^2 + \sigma^2}}{\sigma^2} |x-\mu| \bigg), 
\end{equation}
with support $\mathbb{R}$.  In the limit $\sigma\rightarrow 0$ the support becomes the region $(\mu,\infty)$ if $\theta>0$, and is $(-\infty,\mu)$ if $\theta<0$. Here $K_\nu(x)$ is a modified Bessel function of the second kind, defined in Appendix \ref{appendix}. For a random variable $Z$ with density (\ref{vgdefn}), we write $Z\sim\mathrm{VG}(r,\theta,\sigma,\mu)$.    
Different parametrisations are given in \cite{eberlein} and the book \cite{kkp01}, in which they refer to the distribution as the generalized Laplace distribution. 


The VG distribution is widely used in financial modelling \cite{madan2,madan}; an overview of this and other applications are given in \cite{kkp01}.
The VG distribution also has a rich distributional theory (see Chapter 4 of \cite{kkp01} and \cite{gaunt vg}), and the class contains several classical distributions as special or limiting cases, such as the normal, gamma, Laplace, product of zero mean normals and difference of gammas (see Proposition 1.2 of \cite{gaunt vg} for a list of further cases).  


Stein's method \cite{stein} is a powerful and widely used approach for deriving quantitative limit theorems in probability.  Originally developed for normal approximation, it has been extended to several other distributions such as the Poisson \cite{chen75}, exponential \cite{chatterjee,pekoz1}, gamma \cite{gaunt chi square, luk} and Laplace \cite{pike}; for an overview see \cite{ley}. Stein's method was extended to the VG distribution by \cite{gaunt thesis, gaunt vg}, with subsequent technical advances made by \cite{gaunt vgii}. The Malliavin-Stein method \cite{np09,np12} for VG approximation was developed by \cite{eichelsbacher}, and together with results from \cite{gaunt thesis, gaunt vg}, they were able to obtain ``six moment" theorems for the VG approximation of double Wiener-It\^{o} integrals. Recently, with the aid of results from this paper (Theorem \ref{thm1} and Corollary \ref{cortothm1}), \cite{aet21} have achieved a six moment for the VG approximation of double Wiener-It\^{o} integrals with optimal convergence rate. Their result complements and in some senses generalises the celebrated optimal fourth moment theorem of \cite{np15} for normal approximation; see also \cite{aek20} for a similar result for gamma approximation. Further VG approximations are given in \cite{aaps17,ag18,azmoodeh,bt17}, in which the limiting distributions can be represented as the difference of two centered gamma random variables or the product of two mean zero normal random variables.


The starting point of Stein's method for VG approximation is the Stein equation \cite{gaunt vg}
\begin{equation}
\label{377}L_{r,\theta,\sigma,\mu}f(x):=\sigma^2(x-\mu)f''(x)+(\sigma^2r+2\theta (x-\mu))f'(x)+(r\theta-(x-\mu))f(x)=\tilde{h}(x),
\end{equation}
where $\tilde{h}(x)=h(x)-\mathbb{E}h(Z)$ for $h:\mathbb{R}\rightarrow\mathbb{R}$ and $Z\sim\mathrm{VG}(r,\theta,\sigma,\mu)$, and is such that $\mathbb{E}|h(Z)|<\infty$.  Here $L_{r,\theta,\sigma,\mu}$ is the VG Stein operator. Along with the Stein equations of \cite{pekoz} and \cite{pike}, this was one of the first second order Stein equations in the literature.  Let us now set $\mu=0$; we recover the general case using the translation relation that if $Z\sim\mathrm{VG}(r,\theta,\sigma,\mu)$ then $Z-\mu\sim\mathrm{VG}(r,\theta,\sigma,0)$.  The solution to (\ref{377}) is (see \cite[Lemma 3.3]{gaunt vg}; here we have used a different parametrisation for the VG distribution)
\begin{align}  f_h(x) &=-\frac{\mathrm{e}^{-\theta x/\sigma^2}}{\sigma^2|x|^{\nu}}K_{\nu}\bigg(\frac{\sqrt{\theta^2+\sigma^2}}{\sigma^2}|x|\bigg) \int_0^x \mathrm{e}^{\theta t/\sigma^2} |t|^{\nu} I_{\nu}\bigg(\frac{\sqrt{\theta^2+\sigma^2}}{\sigma^2}|t|\bigg) \tilde{h}(t) \,\mathrm{d}t \nonumber \\
\label{vgsolngeneral0} &\quad-\frac{\mathrm{e}^{-\theta x/\sigma^2}}{\sigma^2|x|^{\nu}}I_{\nu}\bigg(\frac{\sqrt{\theta^2+\sigma^2}}{\sigma^2}|x|\bigg) \int_x^{\infty} \mathrm{e}^{\theta t/\sigma^2} |t|^{\nu} K_{\nu}\bigg(\frac{\sqrt{\theta^2+\sigma^2}}{\sigma^2}|t|\bigg)\tilde{h}(t)\,\mathrm{d}t,
\end{align}
where $\nu=\frac{r-1}{2}$ and $I_\nu(x)$ is a modified Bessel function of the first kind, defined in Appendix \ref{appendix}. If $h$ is bounded, then $f_h(x)$ and $f_h'(x)$ are bounded for all $x\in\mathbb{R}$, and (\ref{vgsolngeneral0}) is the unique bounded solution when $r\geq1$, and the unique solution with bounded first derivative for $r>0$ (see \cite[Lemma 3.14]{gaunt thesis}).

One may approximate a random variable of interest $W$ by a VG random variable $Z\sim\mathrm{VG}(r,\theta,\sigma,0)$ by evaluating both sides of (\ref{377}) at $W$, taking expectations and then taking the supremum of both sides over a class of functions $\mathcal{H}$ to arrive at
\begin{equation*}\sup_{h\in\mathcal{H}}|\mathbb{E}h(W)-\mathbb{E}h(Z)|=\sup_{h\in\mathcal{H}}\mathbb{E}|L_{r,\theta,\sigma,0}f_h(W)|.
\end{equation*}
This is important because many standard probability metrics have a representation of the form $\sup_{h\in\mathcal{H}}|\mathbb{E}h(W)-\mathbb{E}h(Z)|$.  In particular, taking  
\begin{align*}\mathcal{H}_{\mathrm{K}}&=\{\mathbf{1}(\cdot\leq z)\,|\,z\in\mathbb{R}\}, \\
\mathcal{H}_{\mathrm{W}}&=\{h:\mathbb{R}\rightarrow\mathbb{R}\,|\,\text{$h$ is Lipschitz, $\|h'\|\leq1$}\}, \\
\mathcal{H}_{\mathrm{BW}}&=\{h:\mathbb{R}\rightarrow\mathbb{R}\,|\,\text{$h$ is Lipschitz, $\|h\|\leq1$ and $\|h'\|\leq1$}\}
\end{align*}
yields the Kolmogorov, Wasserstein and bounded Wasserstein distances. We shall denote the Kolmogorov and Wasserstein distances by $d_{\mathrm{K}}$ and $d_{\mathrm{W}}$, respectively. Here, and throughout the paper, $\|\cdot\|:=\|\cdot\|_\infty$ denotes the usual supremum norm of a real-valued function on $\mathbb{R}$.

In order for the above procedure to be effective, it is crucial to have suitable bounds on the solution (\ref{vgsolngeneral0}), which are often referred to in the literature as Stein factors. This is technically demanding, due to the presence of modified Bessel functions in the solution  together with the singularity at the origin in the Stein equation (\ref{377}). The first bounds in the literature \cite{gaunt vg} (given for the case $\theta=0$) resulted from a brute force approach that involved the writing of three papers on modified Bessel functions \cite{gaunt ineq1, gaunt ineq2,gaunt diff} and long calculations (given in Section 3.3 and Appendix D of the thesis \cite{gaunt thesis}). A significant advance was later made by \cite{dgv15}. Their iterative approach  reduced the problem of bounding derivatives of arbitrary order to bounding just the solution and its first derivative. Consequently, they were able to obtain bounds for derivatives of any order for the whole class of VG distributions. However, the dependence of the bounds of \cite{dgv15} on the test function $h$ meant that they were only suitable for approximation in metrics that are weaker than the Wasserstein and Kolmogorov metrics. A technical advance was made in the recent work \cite{gaunt vgii} in which the iterative technique of \cite{dgv15} and new inequalities for integrals of modified Bessel functions \cite{gaunt ineq3} were used to obtain bounds suitable for Wasserstein 
and Kolmogorov
distance error bounds in the case $\theta=0$. 

In this paper, we complement the work of \cite{gaunt vgii} by obtaining analogous bounds for the whole class of VG distributions. Our results have been made possible due to a very recent work on inequalities for integrals of modified Bessel functions \cite{gaunt ineq2020}.  
The bounds we establish have followed from a series of contributions to the problem of bounding derivatives of solutions of Stein equations together with technical results for modified Bessel functions spanning several papers, and the overall task of establishing such bounds for VG approximation has arguably been more demanding than for any other distribution for which this step of Stein's method has been achieved. 
Like a number of other papers in the literature, for example \cite{bgx15,bu98,bx06,cd19,daly08,daly17,dgv15,lefevre,lmx20,mg16,
roellin12}, the main focus of this paper is to obtain new Stein factors, although we do present a simple application to complement the work of \cite{eichelsbacher} on the Malliavin-Stein method for VG approximation.  Here our work fixes a technical issue and provides explicit constants for their quantitative limit theorems, with Corollary \ref{cor5.4} giving a quantitative sixth moment theorem for the VG approximation of double Wiener-It\^{o} integrals, with explicit bounds in the Wasserstein and Kolmogorov distances. We give a demonstration of these general bounds by using them to obtain bounds on the rate of convergence, with respect to the Wasserstein and Kolmogorov distances, in one of the main results of the recent work \cite[Theorem 2.4]{bt17}, which concerns the generalized Rosenblatt process at extreme critical exponent. Here the limiting distribution is a VG distribution with $\theta\not=0$, highlighting the importance of our generalisation of the Stein factor bounds of \cite{gaunt vgii} to the general $\theta\in\mathbb{R}$ case.



The rest of this paper is organised as follows.  In Section \ref{sec2}, we present some basic properties of VG distributions that will be used in the paper.  In Section \ref{sec3}, we obtain our new bounds for the solution of the VG Stein equation.  We also provide a connection between Kolmogorov and Wasserstein distances between a general distribution and a VG distribution. Our application to the Malliavin-Stein method for VG approximation is given in Section \ref{sec4}.  Proofs of some technical results are given in Section \ref{sec6}. Appendix \ref{appendix} lists some relevant basic properties and inequalities for modified Bessel functions.  Appendix \ref{appb} provides a list of uniform bounds for expressions involving integrals of modified Bessel functions that we use in obtaining our bounds for the solution of the VG Stein equation.

\section{The class of variance-gamma distributions}\label{sec2}

In this section, we present some basic properties of the class of variance-gamma (VG) distributions that will be useful in the remainder of the paper; for further properties, see \cite{gaunt vg} and Chapter 4 of \cite{kkp01}.


The modified Bessel function in the probability density function (\ref{vgdefn}) makes it difficult to parse on first inspection.  We can gain some understanding from the following limiting forms.  Applying the limiting form (\ref{Ktendinfinity}) to (\ref{vgdefn}) gives that,
\begin{equation*}
p(x)\sim \frac{1}{2^{\frac{r}{2}}(\theta^2+\sigma^2)^{\frac{r}{4}}\Gamma(\frac{r}{2})}|x|^{\frac{r}{2}-1}\exp\bigg(\frac{\theta}{\sigma^2}(x-\mu)-\frac{\sqrt{\theta^2+\sigma^2}}{\sigma^2}|x-\mu|\bigg), \quad |x|\rightarrow\infty,
\end{equation*}
which is valid for all $r>0$, $\theta\in\mathbb{R}$, $\sigma>0$ and $\mu\in\mathbb{R}$. Similarly, this time using the limiting form (\ref{Ktend0}), we have that (see \cite{gaunt thesis}) 
\begin{equation}\label{pmutend}p(x)\sim\begin{cases}\displaystyle \frac{1}{2\sigma\sqrt{\pi}(1+\theta^2/\sigma^2)^{\frac{r-1}{2}}}\frac{\Gamma\big(\frac{r-1}{2}\big)}{\Gamma\big(\frac{r}{2}\big)}, &  x\rightarrow\mu,\:r>1, \\
\displaystyle -\frac{1}{\pi\sigma}\log|x-\mu|, & x\rightarrow\mu,\: r=1, \\
\displaystyle \frac{1}{(2\sigma)^r\sqrt{\pi}}\frac{\Gamma\big(\frac{1-r}{2}\big)}{\Gamma\big(\frac{r}{2}\big)}|x-\mu|^{r-1}, & x\rightarrow\mu,\: 0<r<1. \end{cases}
\end{equation}
We see that the density has a singularity at $x=\mu$ if $r\leq1$.  In fact, for all parameter values, the $\mathrm{VG}(r,\theta,\sigma,\mu)$ distribution is unimodal.  The following properties of the mode $M$ can be found in \cite{gm20}. For $0<r\leq2$, $\theta\in\mathbb{R}$, $\sigma>0$, or $r>0$, $\theta=0$, $\sigma>0$ the mode is equal to $\mu$.  Suppose now that $r>2$, $\theta\in\mathbb{R}$, $\sigma>0$. Then $M=\mu+\mathrm{sgn}(\theta)\cdot x^*$, where $x^*$ is the unique positive solution of the equation 
\begin{equation}\label{xstar}K_{\frac{r-3}{2}}\bigg(\frac{\sqrt{\theta^2+\sigma^2}}{\sigma^2}x\bigg)=\frac{|\theta|}{\sqrt{\theta^2+\sigma^2}}K_{\frac{r-1}{2}}\bigg(\frac{\sqrt{\theta^2+\sigma^2}}{\sigma^2}x\bigg).
\end{equation}
For $r>2$, $\theta>0$, $\sigma>0$, we have the two-sided inequality 
\begin{equation}\label{gmineq}\theta(r-3)_+<M-\mu<\theta(r-2),
\end{equation}  
with the inequality reversed for $\theta<0$. Here $x_+=\max\{0,x\}$.

The following result is new and needed in the proof of Proposition \ref{prop1}. Both Proposition \ref{prop2mode} and Proposition \ref{prop1} are proved in Section \ref{sec6}.

\begin{proposition}\label{prop2mode} $r>2$, $\theta\in\mathbb{R}$, $\sigma>0$, $\mu\in\mathbb{R}$ the $\mathrm{VG}(r,\theta,\sigma,\mu)$ density (\ref{vgdefn}) can be bounded above for all $x\in\mathbb{R}$ by
\begin{align}\label{vgpdfineq2}p(x) \leq \frac{\Gamma(\frac{r-1}{2})}{2\sigma\sqrt{\pi} \Gamma(\frac{r}{2})}\bigg(\frac{\sigma^2}{\theta^2+\sigma^2}\bigg)^{\frac{r-1}{2}} \mathrm{e}^{\frac{\theta^2}{\sigma^2} (r-2)}.
\end{align}
For  $r>3$, $\theta\not=0$, $\sigma>0$, $\mu\in\mathbb{R}$, the following bound improves on (\ref{vgpdfineq2}) for all $x\in\mathbb{R}$:
\begin{align}\label{vgpdfineq}p(x)< \frac{1}{\sigma\sqrt{\pi} \Gamma(\frac{r}{2})} \mathrm{e}^{\frac{\theta^2}{\sigma^2} (r-2)} \bigg(\frac{\theta(r-3)}{2\sqrt{\theta^2 +  \sigma^2}}\bigg)^{\frac{r-1}{2}} K_{\frac{r-1}{2}}\bigg(\frac{\theta\sqrt{\theta^2 + \sigma^2}}{\sigma^2} (r-3) \bigg).
\end{align}
\end{proposition}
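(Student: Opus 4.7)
The plan is to exploit unimodality of the $\mathrm{VG}(r,\theta,\sigma,\mu)$ density together with the mode bounds recalled in (\ref{gmineq}). Since $p(x)\le p(M)$ for all $x\in\mathbb{R}$, it suffices to bound $p(M)$. For $\theta=0$ and $r>2$ we have $M=\mu$, and the first case of (\ref{pmutend}) shows that $p(\mu)$ equals the right-hand side of (\ref{vgpdfineq2}) with $\theta=0$; so I may focus on the case $\theta\neq 0$, where $M-\mu$ has the same sign as $\theta$ with $|M-\mu|=x^{*}$, so that $\theta(M-\mu)=|\theta|x^{*}$. The upper bound $x^{*}<|\theta|(r-2)$, which holds in both sign cases by (\ref{gmineq}), then gives $\mathrm{e}^{\theta(M-\mu)/\sigma^{2}}\le \mathrm{e}^{\theta^{2}(r-2)/\sigma^{2}}$.

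Writing $a=\sqrt{\theta^{2}+\sigma^{2}}/\sigma^{2}$ and $\nu=(r-1)/2$, the remaining factor in $p(M)$ takes the form $(x^{*}/(2\sqrt{\theta^{2}+\sigma^{2}}))^{\nu}K_{\nu}(ax^{*})$. I would bound this using the monotonicity property that $z\mapsto z^{\nu}K_{\nu}(z)$ is strictly decreasing on $(0,\infty)$, which follows at once from the identity $\tfrac{d}{dz}[z^{\nu}K_{\nu}(z)]=-z^{\nu}K_{\nu-1}(z)<0$ together with positivity of $K_{\nu-1}$ on $(0,\infty)$. Since $\nu=(r-1)/2>0$, the supremum of $z^{\nu}K_{\nu}(z)$ on $(0,\infty)$ is its limit $2^{\nu-1}\Gamma(\nu)$ at $0$, by the small-argument asymptotic (\ref{Ktend0}). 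Substituting $z=ax^{*}$, combining with the exponential bound, and simplifying then yields (\ref{vgpdfineq2}); note that the bound is attained in the symmetric case $\theta=0$, which provides a convenient sanity check.

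For the sharper bound (\ref{vgpdfineq}) in the case $r>3$, I would retain the same argument but exploit the strict lower bound $x^{*}>|\theta|(r-3)$ from (\ref{gmineq}) (valid in both sign cases since $(r-3)_{+}=r-3$). The same monotonicity then allows me to replace the worst-case limit at $0$ by the value of $z^{\nu}K_{\nu}(z)$ at the strictly positive input $z=a|\theta|(r-3)$, producing (\ref{vgpdfineq}) upon unwinding the substitution.

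The proof is essentially an assembly of three already-available ingredients: unimodality of the VG density, the mode bounds (\ref{gmineq}), and the monotonicity of $z^{\nu}K_{\nu}(z)$. There is no serious technical obstacle; the only genuine care required is bookkeeping the sign of $\theta$, in order to ensure that both the upper bound $x^{*}<|\theta|(r-2)$ and the lower bound $x^{*}>|\theta|(r-3)$ follow uniformly from (\ref{gmineq}) irrespective of the sign of $\theta$ (the $\theta<0$ case invoking the reversed version of (\ref{gmineq})).
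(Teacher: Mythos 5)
Your proposal is correct and follows essentially the same route as the paper's own proof: unimodality reduces the problem to bounding $p(M)$, the two-sided mode bound (\ref{gmineq}) controls the exponential factor by $\mathrm{e}^{\theta^2(r-2)/\sigma^2}$ and (for $r>3$) pushes the Bessel factor away from the origin, and the monotone decrease of $z^{\nu}K_{\nu}(z)$ via (\ref{ddbk}) together with the limit (\ref{Ktend0}) finishes both bounds. The only cosmetic difference is that you track the sign of $\theta$ through $|\theta|$ and $x^{*}$ directly, whereas the paper reduces to $\theta>0$ by the symmetry $-Z\sim\mathrm{VG}(r,-\theta,\sigma,0)$; both are sound.
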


\begin{remark}The proof of inequality (\ref{vgpdfineq}) in Proposition \ref{prop2mode} makes use of the two-sided inequality (\ref{gmineq}).  An alternative lower bound for the mode of the $\mathrm{VG}(r,\theta,\sigma,\mu)$ distribution is given in \cite[Corollary 2.6]{gm20}, which improves on the lower bound of (\ref{gmineq}) for $r>4$. Applying this inequality in the proof would lead to a more accurate bound than (\ref{vgpdfineq}) for $r>4$, but the resulting bound would be more complicated.  
\end{remark}




The mean and variance of $Z\sim \mathrm{VG}(r,\theta,\sigma,\mu)$ are given by (see \cite{kkp01})
\begin{equation*}\mathbb{E}Z=\mu+r\theta, \quad \mathrm{Var}(Z)=r(\sigma^2+2\theta^2).
\end{equation*}
An application of the Cauchy-Schwarz inequality then yields
\begin{equation}\label{vgabmom}\mathbb{E}|Z-\mu|\leq\sqrt{r(\sigma^2+2\theta^2)+r^2\theta^2}.
\end{equation}
Let us write $\mathrm{VG}_c(r,\theta,\sigma)$ for $\mathrm{VG}(r,\theta,\sigma,-r\theta)$, the VG distribution with zero mean. (The notation $\mathrm{VG}_c(r,\theta,\sigma)$ was introduced by \cite{eichelsbacher}, and whilst when introducing the notation they mention that this denotes the $\mathrm{VG}(r,\theta,\sigma,0)$ distribution, it is quite clear when studying their paper that they instead meant $\mathrm{VG}(r,\theta,\sigma,-r\theta)$.) The following formulas for the cumulants of $Y\sim  \mathrm{VG}_c(r,\theta,\sigma)$ \cite[Lemma 3.6]{eichelsbacher} will be used in the proof of Corollary \ref{cor5.4}:
\begin{align*}
&\kappa_2(Y)=r(\sigma^2+2\theta^2), \quad \kappa_3(Y)=2r\theta(3\sigma^2+4\theta^2),\quad \kappa_4(Y)=6r(\sigma^4+8\sigma^2\theta^2+8\theta^4), \\
&\kappa_5(Y)=24r\theta(5\sigma^4+20\sigma^2\theta^2+16\theta^4), \quad \kappa_6(Y)=120r(\sigma^2+2\theta^2)(\sigma^4+16\sigma^2\theta^2+16\theta^4).
\end{align*} 

\section{Bounds for the solution of the Stein equation}\label{sec3}

In this section, we establish new bounds for the solution (\ref{vgsolngeneral0}) of the $\mathrm{VG}(r,\theta,\sigma,\mu)$ Stein equation (\ref{377}) that have the correct dependence on the test function $h$ for the purposes of using Stein's method to derive Wasserstein and Kolmogorov distance error bounds for VG approximation.  Our bounds are valid for the entire parameter space $r>0$, $\theta\in\mathbb{R}$, $\sigma>0$ and $\mu\in\mathbb{R}$.  

We begin by stating two bounds from \cite{dgv15} (see inequalities (3.31) and (3.32) from that reference) that are the only bounds in the current literature that are of a suitable form for deriving Kolmogorov distance bounds for the whole class of VG distributions; no bounds in the literature are suitable for the purposes of obtaining Wasserstein distance bounds for the entire VG class.  We will use these bounds in our proof of Theorem \ref{thm1}. For bounded and measurable $h:\mathbb{R}\rightarrow\mathbb{R}$,
\begin{eqnarray}\label{vgsolnunibound}\|f\|&\leq&\frac{\|\tilde{h}\|}{\sqrt{\theta^2+\sigma^2}}\bigg(\frac{2}{r}+A_{r,\theta,\sigma}\bigg), \\
\label{vgsolnunibound1}\|f'\|&\leq&\frac{\|\tilde{h}\|}{\sigma^2}\bigg(\frac{2}{r}+A_{r,\theta,\sigma}\bigg),
\end{eqnarray}
where
\begin{equation*}\label{breqn}A_{r,\theta,\sigma}=\begin{cases} \displaystyle \frac{2\sqrt{\pi}}{\sqrt{2r-1}}\bigg(1+\frac{\theta^2}{\sigma^2}\bigg)^{\frac{r}{2}}, & \:  r\geq2, \\
\displaystyle 12\Gamma\Big(\frac{r}{2}\Big)\bigg(1+\frac{\theta^2}{\sigma^2}\bigg), & \:   0<r<2. \end{cases}
\end{equation*}
Our presentation of inequalities (\ref{vgsolnunibound}) and (\ref{vgsolnunibound1}) differs a little from that given in \cite{dgv15}. This is discussed in Remark \ref{appbr}.

Let us now state our main result.  The theorem extends Theorem 3.1 of \cite{gaunt vgii}, which was given for the $\theta=0$ case, to cover the entire class of VG distributions.

\begin{theorem}\label{thm1}Let $f$ denote the solution (\ref{vgsolngeneral0}) of the $\mathrm{VG}(r,\theta,\sigma,\mu)$ Stein equation (\ref{377}). 

\vspace{2mm}

\noindent{1.} Suppose  $h:\mathbb{R}\rightarrow\mathbb{R}$ is bounded and measurable.  Then
\begin{align}\label{thmone}\|(x-\mu)f(x)\|&\leq \bigg(1+\frac{6}{r}+B_{r,\theta,\sigma}\bigg)\|\tilde{h}\|, \\
\label{thmtwo}\|(x-\mu)f'(x)\|&\leq \frac{2\sqrt{\theta^2+\sigma^2}}{\sigma^2}\bigg(1+\frac{6}{r}+B_{r,\theta,\sigma}\bigg)\|\tilde{h}\|, \\
\label{thmthree}\|(x-\mu)f''(x)\|&\leq \frac{1}{\sigma^2}\bigg\{5+2rA_{r,\theta,\sigma}+\bigg(5+\frac{4\theta^2}{\sigma^2}\bigg)\bigg(1+\frac{6}{r}+B_{r,\theta,\sigma}\bigg)\bigg\}\|\tilde{h}\|,
\end{align}
where
\begin{equation*}\label{bdefn}B_{r,\theta,\sigma}=\begin{cases} \displaystyle \sqrt{\frac{\pi(r-1)}{2}}\bigg(1+\frac{\theta^2}{\sigma^2}\bigg)^{\frac{r}{2}-1}, & \:  r\geq2, \\
\displaystyle 2, & \:   0<r<2. \end{cases}
\end{equation*}

\vspace{2mm}

\noindent{2.} Suppose now that $h:\mathbb{R}\rightarrow\mathbb{R}$ is Lipschitz.  Then
\begin{align}\label{thm1f0}\|f\|&\leq\bigg\{4+\frac{2\sqrt{2}}{\sqrt{r}}+\sqrt{2\pi(r+1)}\frac{|\theta|}{\sigma}\bigg(1+\frac{\theta^2}{\sigma^2}\bigg)^{\frac{r-1}{2}}+(\sqrt{2r}+r)A_{r,\theta,\sigma}\bigg\}\|h'\|, \\
\|f'\|\label{thm1f1}&\leq\frac{\sqrt{\theta^2+\sigma^2}}{\sigma^2}C_{r,\theta,\sigma}\|h'\|, \\
\|f''\|\label{thm1f2}&\leq\frac{1}{\sigma^2}\bigg(\frac{2}{r+1}+A_{r+1,\theta,\sigma}\bigg)\bigg\{1+\bigg(2+\frac{\theta^2}{\sigma^2}\bigg)C_{r,\theta,\sigma}\bigg\}\|h'\|,
\end{align}
where
\begin{align}\label{cdefn}C_{r,\theta,\sigma}=6+\frac{2\sqrt{2}}{\sqrt{r}}+2\sqrt{2\pi(r+1)}\frac{|\theta|}{\sigma}\bigg(1+\frac{\theta^2}{\sigma^2}\bigg)^{\frac{r-1}{2}}+2(\sqrt{2r}+r)A_{r,\theta,\sigma}.
\end{align}
We also have that 
\begin{align}
\label{thmfour}\|(x-\mu)f'(x)\|&\leq\bigg(1+\frac{6}{r+1}+B_{r+1,\theta,\sigma}\bigg)\bigg\{1+\bigg(2+\frac{\theta^2}{\sigma^2}\bigg)C_{r,\theta,\sigma}\bigg\}\|h'\|, \\
\label{thmfive}\|(x-\mu)f''(x)\|&\leq\frac{2\sqrt{\theta^2+\sigma^2}}{\sigma^2}\bigg(1+\frac{6}{r+1}+B_{r+1,\theta,\sigma}\bigg)\bigg\{1+\bigg(2+\frac{\theta^2}{\sigma^2}\bigg)C_{r,\theta,\sigma}\bigg\}\|h'\|, \\
\label{thmsix}\|(x-\mu)f^{(3)}(x)\|&\leq\frac{1}{\sigma^2}\bigg\{5+2(r+1)A_{r+1,\theta,\sigma}+\bigg(5+\frac{4\theta^2}{\sigma^2}\bigg)\bigg(1+\frac{6}{r+1}+B_{r+1,\theta,\sigma}\bigg)\bigg\}\nonumber\\
&\quad\times \bigg\{1+\bigg(2+\frac{\theta^2}{\sigma^2}\bigg)C_{r,\theta,\sigma}\bigg\}\|h'\|.
\end{align}
\end{theorem}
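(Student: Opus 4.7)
The plan is to treat Part 1 (bounded $h$) directly from the integral formula (\ref{vgsolngeneral0}), then bootstrap to Part 2 (Lipschitz $h$) by a parameter shift $r\mapsto r+1$. Throughout, set $\mu=0$; the general case follows from the translation invariance noted after (\ref{377}). For Part 1, bound (\ref{thmone}) is obtained by multiplying (\ref{vgsolngeneral0}) by $x$, pulling $\|\tilde h\|$ out of both integrals, and estimating the remaining Bessel-integral quantities, such as $|x|^{1-\nu}K_\nu(\alpha|x|)\int_0^x \mathrm{e}^{\theta t/\sigma^2}|t|^\nu I_\nu(\alpha|t|)\,\mathrm{d}t$ with $\alpha=\sqrt{\theta^2+\sigma^2}/\sigma^2$, uniformly in $x$ via the estimates collected in Appendix \ref{appb}; those estimates are the extensions of the $\theta=0$ bounds of \cite{gaunt vgii} to the full parameter range, made possible by the inequalities of \cite{gaunt ineq2020}. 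Bound (\ref{thmtwo}) is obtained the same way after first differentiating (\ref{vgsolngeneral0}) via the Bessel identities listed in Appendix \ref{appendix} and then multiplying by $x$. Bound (\ref{thmthree}) is then immediate: rearranging the Stein equation (\ref{377}) algebraically gives
\begin{equation*}
\sigma^2 xf''(x)=\tilde h(x)-(\sigma^2 r+2\theta x)f'(x)-(r\theta-x)f(x),
\end{equation*}
so the triangle inequality combined with (\ref{vgsolnunibound}), (\ref{vgsolnunibound1}), (\ref{thmone}) and (\ref{thmtwo}) yields (\ref{thmthree}) after using $|\theta|/\sqrt{\theta^2+\sigma^2}\le 1$ and $|\theta|\sqrt{\theta^2+\sigma^2}/\sigma^2\le 1+\theta^2/\sigma^2$.

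For Part 2 I first prove (\ref{thm1f0}) and (\ref{thm1f1}) on $\|f\|$ and $\|f'\|$. Writing $\tilde h(t)=\tilde h(0)+\int_0^t h'(s)\,\mathrm{d}s$ in (\ref{vgsolngeneral0}) and swapping the order of integration trades $\|\tilde h\|$ for $\|h'\|$, at the cost of a boundary contribution $|\tilde h(0)|\le\|h'\|\,\mathbb{E}|Z|$ that is controlled via (\ref{vgabmom}); the remaining integrals against $h'$ are again absorbed by a second round of the Appendix \ref{appb} estimates, with the computation for $f'$ requiring one further use of the Bessel identities in Appendix \ref{appendix}. The constants emerge in precisely the shape of (\ref{thm1f0}) and (\ref{cdefn}), with $C_{r,\theta,\sigma}$ essentially playing the role of $2\|f\|/\|h'\|$ up to a harmless correction in $r$.

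The remaining bounds (\ref{thm1f2})--(\ref{thmsix}) follow from a parameter-shift argument. Differentiating (\ref{377}) once and regrouping the coefficient of $f'(x)$ gives
\begin{equation*}
L_{r+1,\theta,\sigma,\mu}f'(x)=h'(x)+f(x)-\theta f'(x)=:\psi(x),
\end{equation*}
and the Stein identity $\mathbb{E}[L_{r+1}f'(Z_{r+1})]=0$ for $Z_{r+1}\sim\mathrm{VG}(r+1,\theta,\sigma,\mu)$ (valid since $f,f',f''$ are already known to be bounded from the previous step) gives $\mathbb{E}[\psi(Z_{r+1})]=0$; the uniqueness statement after (\ref{vgsolngeneral0}) then identifies $f'$ with the formula (\ref{vgsolngeneral0}) taken at parameter $r+1$ and test function $\psi$. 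Applying Part 1 at parameter $r+1$ to $f'$ therefore produces (\ref{thm1f2}), (\ref{thmfour}), (\ref{thmfive}) and (\ref{thmsix}) from (\ref{vgsolnunibound1}), (\ref{thmone}), (\ref{thmtwo}) and (\ref{thmthree}) respectively, with $\|\tilde\psi\|=\|\psi\|\le\|h'\|+\|f\|+|\theta|\|f'\|$ controlled by (\ref{thm1f0})--(\ref{thm1f1}) and $|\theta|\sqrt{\theta^2+\sigma^2}/\sigma^2\le 1+\theta^2/\sigma^2$; this is exactly what cleans the resulting coefficient into the factor $1+(2+\theta^2/\sigma^2)C_{r,\theta,\sigma}$ on the right-hand sides. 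The main obstacle lies in the Lipschitz step (\ref{thm1f0})--(\ref{thm1f1}): when $\theta\neq 0$ the integrand $\mathrm{e}^{\theta t/\sigma^2}|t|^\nu I_\nu(\alpha|t|)$ has no elementary antiderivative, so the integration by parts is significantly more delicate than in the $\theta=0$ setting of \cite{gaunt vgii}, and producing the precise $|\theta|/\sigma$ dependence in $C_{r,\theta,\sigma}$ is the most technical step—this is exactly where the Bessel integral inequalities of \cite{gaunt ineq2020}, packaged in Appendix \ref{appb}, carry the real weight.
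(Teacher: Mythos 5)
Your proposal follows essentially the same route as the paper's proof: Part 1 is obtained by direct estimation of the integral representation (and its derivative) multiplied by $x$ using the inequalities of Appendix \ref{appb}, the bound on $\|(x-\mu)f''(x)\|$ by rearranging the Stein equation and invoking (\ref{vgsolnunibound}), (\ref{vgsolnunibound1}), (\ref{thmone}) and (\ref{thmtwo}), and all of (\ref{thm1f2})--(\ref{thmsix}) by the iterative parameter shift of \cite{dgv15} applied with the Part 1 and first-derivative bounds at parameter $r+1$. Two harmless discrepancies: differentiating (\ref{377}) gives the test function $h'(x)+f(x)+\theta f'(x)$, not $h'(x)+f(x)-\theta f'(x)$ (irrelevant since only $|\theta|\|f'\|$ enters the estimate), and for (\ref{thm1f0})--(\ref{thm1f1}) the paper simply uses the mean value theorem bound $|\tilde{h}(t)|\leq\|h'\|(|t|+\mathbb{E}|Z|)$ together with (\ref{vgabmom}) rather than your Fubini decomposition, though both reduce to the same Appendix \ref{appb} integral inequalities.
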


\begin{remark}In the light of the singularity in the Stein equation (\ref{377}) at $x=\mu$, the factor of $(x-\mu)$ in the left-hand sides of the estimates (\ref{thmone})--(\ref{thmthree}) and (\ref{thmfour})--(\ref{thmsix}) is quite natural.
\end{remark}

The bounds in Theorem \ref{thm1} can be used together with the iterative technique of \cite{dgv15} to obtain bounds on higher order derivatives of the solution of the VG Stein equation. These bounds will necessarily involve higher order derivatives of the test function $h$; see Proposition \ref{ptpt}.  An example is given in the following corollary, which improves on a bound for $\|f^{(3)}\|$ given on page 18 of \cite{dgv15} by only depending on $\|h''\|$ and $\|h'\|$ (the bound of \cite{dgv15} also had a term involving $\|\tilde{h}\|$).  


\begin{corollary}\label{cortothm1}Let $f$ denote the solution (\ref{vgsolngeneral0}) of the $\mathrm{VG}(r,\theta,\sigma,\mu)$ Stein equation (\ref{377}). Let $h:\mathbb{R}\rightarrow\mathbb{R}$ be such that its first derivative $h'$ is bounded and Lipschitz. Then
\begin{align*}\|f^{(3)}\|&\leq \frac{1}{\sigma^4}\bigg(\frac{2}{r+2}+A_{r+2,\theta,\sigma}\bigg)\bigg\{1+\bigg(2+\frac{\theta^2}{\sigma^2}\bigg)C_{r+1,\theta,\sigma}\bigg\}\bigg\{\|h''\|+\\
&\quad+\bigg[\sqrt{\theta^2+\sigma^2}C_{r,\theta,\sigma}+|\theta|\bigg(\frac{2}{r+1}+A_{r+1,\theta,\sigma}\bigg)\bigg\{1+\bigg(2+\frac{\theta^2}{\sigma^2}\bigg)C_{r,\theta,\sigma}\bigg\}\bigg]\|h'\|\bigg\},
\end{align*} 
where $A_{r,\theta,\sigma}$ and $C_{r,\theta,\sigma}$ are defined as in Theorem \ref{thm1}.
\end{corollary}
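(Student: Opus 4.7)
The plan is to iterate the bounds of Theorem \ref{thm1} once, exploiting the observation of \cite{dgv15} that if $f$ solves the $\mathrm{VG}(r,\theta,\sigma,\mu)$ Stein equation then $f'$ solves the $\mathrm{VG}(r+1,\theta,\sigma,\mu)$ Stein equation with a right-hand side that is a perturbation by $f$ and $f'$. Since the general case follows by translation, I would first reduce to $\mu=0$. Differentiating (\ref{377}) once with respect to $x$ and regrouping coefficients, a short computation yields
\[L_{r+1,\theta,\sigma,0}\,f'(x)=\tilde{h}'(x)+f(x)-\theta f'(x),\]
so $f'$ solves the $\mathrm{VG}(r+1,\theta,\sigma,0)$ Stein equation whose test function, modulo an additive constant irrelevant to derivatives, is $H(x):=h'(x)+f(x)-\theta f'(x)$. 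Under the corollary's hypothesis that $h'$ is bounded and Lipschitz, Theorem \ref{thm1} gives $\|f\|,\|f'\|,\|f''\|<\infty$, so $H$ is Lipschitz with $\|H'\|\leq\|h''\|+\|f'\|+|\theta|\,\|f''\|$.

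Next, I would apply the bound (\ref{thm1f2}) with $r$ replaced by $r+1$ to this new Stein equation; this controls $\|(f')''\|=\|f^{(3)}\|$ by the factor $\bigl(\frac{2}{r+2}+A_{r+2,\theta,\sigma}\bigr)\bigl\{1+(2+\theta^2/\sigma^2)C_{r+1,\theta,\sigma}\bigr\}$ times $\|H'\|$, with the appropriate power of $\sigma^{-2}$ in front. Substituting the bound (\ref{thm1f1}) for $\|f'\|$ and (\ref{thm1f2}) for $\|f''\|$ (both at the original parameter $r$ and both in terms of $\|h'\|$) into the triangle-inequality estimate for $\|H'\|$ produces exactly the two summands $\sqrt{\theta^2+\sigma^2}\,C_{r,\theta,\sigma}$ and $|\theta|\bigl(\frac{2}{r+1}+A_{r+1,\theta,\sigma}\bigr)\bigl\{1+(2+\theta^2/\sigma^2)C_{r,\theta,\sigma}\bigr\}$ that multiply $\|h'\|$ in the stated inequality, while $\|h''\|$ picks up only the remaining outer factor.

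The only genuinely non-mechanical step is the differentiation of the Stein operator in the first paragraph: after regrouping, one must verify that the $f^{(3)}$- and $f''$-coefficients reassemble exactly into the $\mathrm{VG}(r+1)$ Stein operator $L_{r+1,\theta,\sigma,0}$, and that the terms left over from differentiating the $f'$- and $f$-coefficients of the original operator combine into the remainder $f(x)-\theta f'(x)$. This is routine but is the essential reduction that makes the rest of the argument a one-line application of Theorem \ref{thm1} together with the triangle inequality; without it, one would be stuck trying to differentiate the explicit representation (\ref{vgsolngeneral0}) and fighting the singularity at the origin.
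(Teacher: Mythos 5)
Your proposal is correct and follows essentially the same route as the paper: differentiate the Stein equation once to recognise $f'$ as the solution of the $\mathrm{VG}(r+1,\theta,\sigma,0)$ Stein equation with test function $h'+f\mp\theta f'$, apply the bound (\ref{thm1f2}) with $r$ replaced by $r+1$, and control the new test function's Lipschitz constant via the triangle inequality together with (\ref{thm1f1}) and (\ref{thm1f2}) at level $r$. (Incidentally, your sign $-\theta f'$ is the one my own differentiation of (\ref{377}) produces, whereas the paper's (\ref{iteqn}) writes $+\theta f'$; since only $|\theta|\|f'\|$ enters the final estimate, the discrepancy is immaterial.)
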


\noindent{\emph{Proof of Theorem \ref{thm1}.}} In order to simplify the calculations, we make the following change of parameters
\begin{equation} \label{parameter} \nu=\frac{r-1}{2}, \quad \alpha =\frac{\sqrt{\theta^2 +  \sigma^2}}{\sigma^2}, \quad \beta =\frac{\theta}{\sigma^2}.
\end{equation}
We also set $\mu=0$; the bounds for the general case follow from a simple translation. With these parameters, the solution (\ref{vgsolngeneral0}) can be written as
\begin{align} \label{ink} f(x) &=-\frac{\mathrm{e}^{-\beta x} K_{\nu}(\alpha|x|)}{\sigma^2|x|^{\nu}} \int_0^x \mathrm{e}^{\beta t} |t|^{\nu} I_{\nu}(\alpha|t|) \tilde{h}(t) \,\mathrm{d}t \nonumber \\
&\quad-\frac{\mathrm{e}^{-\beta x} I_{\nu}(\alpha|x|)}{\sigma^2|x|^{\nu}} \int_x^{\infty} \mathrm{e}^{\beta t} |t|^{\nu} K_{\nu}(\alpha|t|)\tilde{h}(t)\,\mathrm{d}t 
\end{align}
\begin{align}
&=-\frac{\mathrm{e}^{-\beta x} K_{\nu}(\alpha|x|)}{\sigma^2|x|^{\nu}} \int_0^x \mathrm{e}^{\beta t} |t|^{\nu} I_{\nu}(\alpha|t|) \tilde{h}(t) \,\mathrm{d}t \nonumber \\
\label{pen}&\quad+\frac{\mathrm{e}^{-\beta x} I_{\nu}(\alpha|x|)}{\sigma^2|x|^{\nu}} \int_{-\infty}^{x} \mathrm{e}^{\beta t} |t|^{\nu} K_{\nu}(\alpha|t|)\tilde{h}(t)\,\mathrm{d}t.
\end{align}
We have equality between the different representations of the solution (\ref{ink}) and (\ref{pen}) because, letting $Z\sim \mathrm{VG}(r,\theta,\sigma,0)$, we have that
\begin{align*}
&\int_{-\infty}^{x} \mathrm{e}^{\beta t} |t|^{\nu} K_{\nu}(\alpha|t|)\tilde{h}(t)\,\mathrm{d}t-\bigg(-\int_x^{\infty} \mathrm{e}^{\beta t} |t|^{\nu} K_{\nu}(\alpha|t|)\tilde{h}(t)\,\mathrm{d}t\bigg)\\
&=\int_{-\infty}^{\infty} \mathrm{e}^{\beta t} |t|^{\nu} K_{\nu}(\alpha|t|)\tilde{h}(t)\,\mathrm{d}t=\frac{1}{\sigma\sqrt{\pi} \Gamma(\frac{r}{2})}  \bigg(\frac{1}{2\sqrt{\theta^2 +  \sigma^2}}\bigg)^{\frac{r-1}{2}}\mathbb{E}[\tilde{h}(Z)]=0, 
\end{align*}
where in the penultimate step we recalled the change of parameters (\ref{parameter}) to observe that $ \mathrm{e}^{\beta t} |t|^{\nu} K_{\nu}(\alpha|t|)$ is the $\mathrm{VG}(r,\theta,\sigma,0)$ density up to the normalising constant.  This equality is useful because 
it means that to obtain uniform bounds for all $x\in\mathbb{R}$ it is sufficient to bound the solution and its derivatives in the region $x\geq0$, provided we consider both the cases of negative and positive $\beta$.  We shall therefore proceed by deriving bounds for $x\geq 0$, which must also hold for $x\leq0$, and thus for all $x\in\mathbb{R}$.


Suppose first that $h:\mathbb{R}\rightarrow\mathbb{R}$ is Lipschitz. We begin by proving the bound for $\|f\|$, which will be used in the derivation of some of the other bounds.  The mean value theorem gives that $|\tilde{h}(x)|\leq\|h'\|(|x|+\mathbb{E}|Z|)$, where $Z\sim \mathrm{VG}(r,\theta,\sigma,0)$.  We recall from (\ref{vgabmom}) that $\mathbb{E}|Z|\leq\sqrt{r(\sigma^2+2\theta^2)+r^2\theta^2}$.  Using these two inequalities, together with the integral inequalities (\ref{propb2a12}), (\ref{fff1}), (\ref{rnmt2b}) and (\ref{rnmt1}), gives that, for $x\geq0$,
\begin{align}|f(x)|&\leq\frac{\|h'\|}{\sigma^2}\bigg\{\frac{\mathrm{e}^{-\beta x}K_\nu(\alpha x)}{x^\nu}\int_0^x\mathrm{e}^{\beta t}(t+\mathbb{E}|Z|)t^\nu I_\nu(\alpha t)\,\mathrm{d}t\nonumber\\
&\quad+\frac{\mathrm{e}^{-\beta x}I_\nu(\alpha x)}{x^\nu}\int_x^\infty\mathrm{e}^{\beta t}(t+\mathbb{E}|Z|)t^\nu K_\nu(\alpha t)\,\mathrm{d}t\bigg\}\nonumber
\\ 
&\leq \frac{\|h'\|}{\sigma^2}\bigg\{\sigma^2+\frac{2\sigma^2}{r\sqrt{\theta^2+\sigma^2}}\mathbb{E}|Z|+\frac{\sigma^4}{\theta^2+\sigma^2}+\sqrt{2\pi}|\theta|\sigma\sqrt{r+1}\bigg(1+\frac{\theta^2}{\sigma^2}\bigg)^{\frac{r-1}{2}}\nonumber\\
&\quad+\frac{\sigma^2 A_{r,\theta,\sigma}}{\sqrt{\theta^2+\sigma^2}}\mathbb{E}|Z|\bigg\}\nonumber\\
&\leq\frac{\|h'\|}{\sigma^2}\bigg\{\sigma^2+\frac{2\sigma^2}{r\sqrt{\theta^2+\sigma^2}}\sqrt{r(\sigma^2+2\theta^2)+r^2\theta^2}+\frac{\sigma^4}{\theta^2+\sigma^2}\nonumber\\
\label{tsts}&\quad+\sqrt{2\pi}|\theta|\sigma\sqrt{r+1}\bigg(1+\frac{\theta^2}{\sigma^2}\bigg)^{\frac{r-1}{2}}+\frac{\sigma^2 A_{r,\theta,\sigma}}{\sqrt{\theta^2+\sigma^2}}\sqrt{r(\sigma^2+2\theta^2)+r^2\theta^2}\bigg\}.
\end{align}
By the triangle inequality, 
\begin{align*}\frac{\sqrt{r(\sigma^2+2\theta^2)+r^2\theta^2}}{\sqrt{\theta^2+\sigma^2}}<\frac{\sqrt{2r(\sigma^2+\theta^2)}+r|\theta|}{\sqrt{\theta^2+\sigma^2}}<\sqrt{2r}+r,
\end{align*}
and we also have that $\frac{\sigma^4}{\theta^2+\sigma^2}\leq\sigma^2$. Therefore the bound in (\ref{tsts}) simplifies to
\begin{align*}|f(x)|&\leq \bigg\{1+\frac{2}{r}(\sqrt{2r}+r)+1+\sqrt{2\pi(r+1)}\frac{|\theta|}{\sigma}\bigg(1+\frac{\theta^2}{\sigma^2}\bigg)^{\frac{r-1}{2}}+(\sqrt{2r}+r)A_{r,\theta,\sigma}\bigg\}\|h'\|\\
&=\bigg\{4+\frac{2\sqrt{2}}{\sqrt{r}}+\sqrt{2\pi(r+1)}\frac{|\theta|}{\sigma}\bigg(1+\frac{\theta^2}{\sigma^2}\bigg)^{\frac{r-1}{2}}+(\sqrt{2r}+r)A_{r,\theta,\sigma}\bigg\}\|h'\|.
\end{align*}
It was sufficient to deal with the case $x\geq0$, and so we have proved inequality (\ref{thm1f0}).

We now prove the bound for $\|f'\|$.  For $x\geq0$, the first derivative of $f$ is given by
\begin{align}f'(x)&=-\frac{1}{\sigma^2}\bigg[\frac{\mathrm{d}}{\mathrm{d}x}\bigg(\frac{\mathrm{e}^{-\beta x} K_{\nu}(\alpha x)}{x^{\nu}}\bigg)\bigg] \int_0^x \mathrm{e}^{\beta t} t^{\nu} I_{\nu}(\alpha t) \tilde{h}(t) \,\mathrm{d}t \nonumber \\
\label{fdash}&\quad-\frac{1}{\sigma^2}\bigg[\frac{\mathrm{d}}{\mathrm{d}x}\bigg(\frac{\mathrm{e}^{-\beta x} I_{\nu}(\alpha x)}{x^{\nu}}\bigg)\bigg] \int_x^{\infty} \mathrm{e}^{\beta t} t^{\nu} K_{\nu}(\alpha t)\tilde{h}(t)\,\mathrm{d}t.
\end{align}
Therefore, using inequalities (\ref{jjj1z0}), (\ref{ddd2z0}),  (\ref{ddd2z00}) and (\ref{jjj1z00}), and inequality (\ref{vgabmom}) to bound $\mathbb{E}|Z|$, we have that, for $x\geq0$,
\begin{align}|f'(x)|&=\frac{\|h'\|}{\sigma^2}\bigg\{\bigg|\frac{\mathrm{d}}{\mathrm{d}x}\bigg(\frac{\mathrm{e}^{-\beta x} K_{\nu}(\alpha x)}{x^{\nu}}\bigg)\bigg| \int_0^x \mathrm{e}^{\beta t} (t+\mathbb{E}|Z|)t^{\nu} I_{\nu}(\alpha t) \,\mathrm{d}t \nonumber \\
&\quad+\bigg|\frac{\mathrm{d}}{\mathrm{d}x}\bigg(\frac{\mathrm{e}^{-\beta x} I_{\nu}(\alpha x)}{x^{\nu}}\bigg)\bigg| \int_x^{\infty} \mathrm{e}^{\beta t}(t+\mathbb{E}|Z|) t^{\nu} K_{\nu}(\alpha t)\,\mathrm{d}t\bigg\}\nonumber\\
&\leq\frac{\|h'\|}{\sigma^2}\bigg\{2\sqrt{\theta^2+\sigma^2}+\frac{2}{r}\sqrt{r(\sigma^2+2\theta^2)+r^2\theta^2}+\frac{2\sigma^2}{\sqrt{\theta^2+\sigma^2}}\nonumber\\
\label{rstm}&\quad+2\sqrt{2\pi}|\theta|\sqrt{r+1}\bigg(1+\frac{\theta^2}{\sigma^2}\bigg)^{\frac{r}{2}}+2A_{r,\theta,\sigma}\sqrt{r(\sigma^2+2\theta^2)+r^2\theta^2}\bigg\}.
\end{align}
We bound the upper bound (\ref{rstm}) similarly to how we bounded (\ref{tsts}) in bounding $\|f\|$ to obtain the simpler bound
\begin{align*}|f'(x)|&\leq \frac{\sqrt{\theta^2+\sigma^2}}{\sigma^2}\bigg\{2+\frac{2}{r}(\sqrt{2r}+r)+2+2\sqrt{2\pi(r+1)}\frac{|\theta|}{\sigma}\bigg(1+\frac{\theta^2}{\sigma^2}\bigg)^{\frac{r}{2}}\\
&\quad+2(\sqrt{2r}+r)A_{r,\theta,\sigma}\bigg\}\|h'\|\\
&=\frac{\sqrt{\theta^2+\sigma^2}}{\sigma^2}C_{r,\theta,\sigma}\|h'\|.
\end{align*}
Again, it suffices to consider $x\geq0$, so we have proved inequality (\ref{thm1f1}).

To bound $\|f''\|$, we use the iterative technique of \cite{dgv15}. 
A detailed account of the technique is given on pages 4--5 of \cite{dgv15}, and it was noted in that work that the technique is applicable to the VG Stein equation (\ref{377}) (see their Assumption 2.1 and Remark 2.2).
   We differentiate both sides of the $\mathrm{VG}(r,\theta,\sigma,0)$ Stein equation (\ref{377}) and rearrange to obtain
\begin{equation}\label{iteqn}\sigma^2 xf^{(3)}(x)+(\sigma^2(r+1)+2\theta x)f''(x)+((r+1)\theta -x)f'(x)=h'(x)+f(x)+\theta f'(x).
\end{equation}
We recognise (\ref{iteqn}) as the $\mathrm{VG}(r+1,\theta,\sigma,0)$ Stein equation, applied to $f'$, with test function $h'(x)+f(x)+\theta f'(x)$. Indeed, (\ref{iteqn}) can be written compactly as
$L_{r+1,\theta,\sigma,0}f'(x)=h'(x)+f(x)+\theta f'(x),$
where $L_{r+1,\theta,\sigma,0}$ is the $\mathrm{VG}(r+1,\theta,\sigma,0)$ Stein operator. Here, the test function $h'(x)+f(x)+\theta f'(x)$ has mean zero with respect to the random variable $Y\sim\mathrm{VG}(r+1,\theta,\sigma,0)$.  We will make use of this property when we later apply inequality (\ref{vgsolnunibound1}).  As $h$ is Lipschitz, it follows from inequalities (\ref{thm1f0}) and (\ref{thm1f1}) that $\mathbb{E}|h'(Y)+f(Y)+\theta f'(Y)|<\infty$. In particular, because (\ref{iteqn}) is the $\mathrm{VG}(r+1,\theta,\sigma,0)$ Stein equation applied to $f'$, it follows that $\mathbb{E}[L_{r+1,\theta,\sigma,0}f'(Y)]=0$, and so $\mathbb{E}[h'(Y)+f(Y)+\theta f'(Y)]=0$.    
An application of inequality  (\ref{vgsolnunibound1}), with $r$ replaced by $r+1$ and test function $h'(x)+f(x)+\theta f'(x)$, now gives that
\begin{align}\|f''\|&=\frac{1}{\sigma^2}\bigg(\frac{2}{r+1}+A_{r+1,\theta,\sigma}\bigg)\|h'(x)+f(x)+\theta f'(x)\| \nonumber\\
\label{dfghy}&\leq\frac{1}{\sigma^2}\bigg(\frac{2}{r+1}+A_{r+1,\theta,\sigma}\bigg)\big(\|h'\|+\|f\|+|\theta|\|f'\|\big).
\end{align}
To obtain the bound (\ref{thm1f2}) for  $\|f''\|$, we use (\ref{thm1f0}) and (\ref{thm1f1}) to bound $\|f\|$ and $\|f'\|$, respectively, and simplify to obtain
\begin{align}\|h'\|+\|f\|+|\theta|\|f'\|&\leq\bigg\{1+\bigg(1+\frac{|\theta|\sqrt{\theta^2+\sigma^2}}{\sigma^2}\bigg)C_{r,\theta,\sigma}\bigg\}\|h'\|\nonumber\\
\label{frfr5}&\leq\bigg\{1+\bigg(2+\frac{\theta^2}{\sigma^2}\bigg)C_{r,\theta,\sigma}\bigg\}\|h'\|,
\end{align} 
where we used the inequality $\frac{|\theta|\sqrt{\theta^2+\sigma^2}}{\sigma^2}<1+\frac{\theta^2}{\sigma^2}$, since $|\theta|<\sqrt{\theta^2+\sigma^2}$. Combining inequalities (\ref{dfghy}) and (\ref{frfr5}) gives us the bound (\ref{thm1f2}), as required.

Suppose now that $h:\mathbb{R}\rightarrow\mathbb{R}$ is bounded and measurable. We now prove the bounds (\ref{thmone})--(\ref{thmthree}).  Using the integral inequalities (\ref{jjj1b0}) and (\ref{ddd2}), we obtain, for $x\geq0$,
\begin{align*}|xf(x)|&=\bigg|\frac{\mathrm{e}^{-\beta x}K_\nu(\alpha x)}{\sigma^2 x^{\nu-1}}\int_0^x \mathrm{e}^{\beta t}t^\nu I_\nu(\alpha t)\tilde{h}(t)\,\mathrm{d}t+\frac{\mathrm{e}^{-\beta x}I_\nu(\alpha x)}{\sigma^2 x^{\nu-1}}\int_x^\infty \mathrm{e}^{\beta t}t^\nu K_\nu(\alpha t)\tilde{h}(t)\,\mathrm{d}t\bigg| \\
&\leq\frac{\|\tilde{h}\|}{\sigma^2}\bigg\{\frac{\mathrm{e}^{-\beta x}K_\nu(\alpha x)}{x^{\nu-1}}\int_0^x \mathrm{e}^{\beta t}t^\nu I_\nu(\alpha t)\,\mathrm{d}t+\frac{\mathrm{e}^{-\beta x}I_\nu(\alpha x)}{x^{\nu-1}}\int_x^\infty \mathrm{e}^{\beta t}t^\nu K_\nu(\alpha t)\,\mathrm{d}t\bigg\} \\
&\leq \frac{\|\tilde{h}\|}{\sigma^2}\bigg\{\sigma^2\bigg(1+\frac{6}{r}\bigg)+\sigma^2B_{r,\theta,\sigma}\bigg\}=\|\tilde{h}\|\bigg(1+\frac{6}{r}+B_{r,\theta,\sigma}\bigg).
\end{align*}
Using the formula (\ref{fdash}) for the first derivative of $f$, followed by an application of the integral inequalities (\ref{jjj1z}) and (\ref{ddd2z}), gives that for $x\geq0$,
\begin{align*}|xf'(x)|&=\bigg|\frac{x}{\sigma^2}\bigg[\frac{\mathrm{d}}{\mathrm{d}x}\bigg(\frac{\mathrm{e}^{-\beta x}K_{\nu}(\alpha x)}{x^{\nu}}\bigg)\bigg]\int_0^x \mathrm{e}^{\beta t}t^\nu I_\nu(\alpha t)\tilde{h}(t)\,\mathrm{d}t\\
&\quad+\frac{x}{\sigma^2}\bigg[\frac{\mathrm{d}}{\mathrm{d}x}\bigg(\frac{\mathrm{e}^{-\beta x}I_{\nu}(\alpha x)}{x^{\nu}}\bigg)\bigg]\int_x^\infty \mathrm{e}^{\beta t}t^\nu K_\nu(\alpha t)\tilde{h}(t)\,\mathrm{d}t\bigg| 
\end{align*}
\begin{align*}
&\leq \frac{\|\tilde{h}\|}{\sigma^2}\bigg\{x\bigg|\frac{\mathrm{d}}{\mathrm{d}x}\bigg(\frac{\mathrm{e}^{-\beta x}K_{\nu}(\alpha x)}{x^{\nu}}\bigg)\bigg|\int_0^x \mathrm{e}^{\beta t}t^\nu I_\nu(\alpha t)\,\mathrm{d}t\\
&\quad+x\bigg|\frac{\mathrm{d}}{\mathrm{d}x}\bigg(\frac{\mathrm{e}^{-\beta x}I_{\nu}(\alpha x)}{x^{\nu}}\bigg)\bigg|\int_x^\infty \mathrm{e}^{\beta t}t^\nu K_\nu(\alpha t)\,\mathrm{d}t\bigg\} \\
&\leq \frac{\|\tilde{h}\|}{\sigma^2}\bigg\{2\bigg(1+\frac{6}{r}\bigg)\sqrt{\theta^2+\sigma^2}+2\sqrt{\theta^2+\sigma^2}B_{r,\theta,\sigma}\bigg\}\\
&=\frac{2\sqrt{\theta^2+\sigma^2}}{\sigma^2}\bigg(1+\frac{6}{r}+B_{r,\theta,\sigma}\bigg)\|\tilde{h}\|.
\end{align*}
Again, it was sufficient to deal with the $x\geq0$ case, and so we have proved (\ref{thmone}) and (\ref{thmtwo}). We now obtain the bound for $\|xf''(x)\|$, and we begin by rearranging the $\mathrm{VG}(r,\theta,\sigma,0)$ Stein equation and using the triangle inequality to obtain that, for $x\in\mathbb{R}$, 
\begin{align*}|xf''(x)|&=\frac{1}{\sigma^2}|\tilde{h}(x)-(\sigma^2r+2\theta x)f'(x)-(r\theta -x)f(x)|\\
&\leq \frac{1}{\sigma^2}\|\tilde{h}\|+r\|f'\|+\frac{2|\theta|}{\sigma^2}\|xf'(x)\|+\frac{r|\theta|}{\sigma^2}\|f\|+\frac{1}{\sigma^2}\|xf(x)\|.
\end{align*}
We now use (\ref{vgsolnunibound1}) to bound $\|f'\|$, (\ref{thmtwo}) to bound $\|xf'(x)\|$, (\ref{vgsolnunibound}) to bound $\|f\|$, and (\ref{thmone}) to bound $\|xf(x)\|$, which gives us the bound
\begin{align*}\|xf''(x)\|&\leq \bigg\{\frac{1}{\sigma^2}+r\cdot\frac{1}{\sigma^2}\bigg(\frac{2}{r}+A_{r,\theta,\sigma}\bigg)+\frac{2|\theta|}{\sigma^2}\cdot\frac{2\sqrt{\theta^2+\sigma^2}}{\sigma^2}\bigg(1+\frac{6}{r}+B_{r,\theta,\sigma}\bigg)\\
&\quad+\frac{r|\theta|}{\sigma^2}\cdot\frac{1}{\sqrt{\theta^2+\sigma^2}}\bigg(\frac{2}{r}+A_{r,\theta,\sigma}\bigg)+\frac{1}{\sigma^2}\cdot\bigg(1+\frac{6}{r}+B_{r,\theta,\sigma}\bigg)\bigg\}\|\tilde{h}\|\\
&\leq \bigg\{\frac{1}{\sigma^2}+r\cdot\frac{1}{\sigma^2}\bigg(\frac{2}{r}+A_{r,\theta,\sigma}\bigg)+\frac{4(\theta^2+\sigma^2)}{\sigma^4}\bigg(1+\frac{6}{r}+B_{r,\theta,\sigma}\bigg)\\
&\quad+\frac{r}{\sigma^2}\bigg(\frac{2}{r}+A_{r,\theta,\sigma}\bigg)+\frac{1}{\sigma^2}\cdot\bigg(1+\frac{6}{r}+B_{r,\theta,\sigma}\bigg)\bigg\}\|\tilde{h}\|\\
&=\frac{1}{\sigma^2}\bigg\{5+2rA_{r,\theta,\sigma}+\bigg(5+\frac{4\theta^2}{\sigma^2}\bigg)\bigg(1+\frac{6}{r}+B_{r,\theta,\sigma}\bigg)\bigg\}\|\tilde{h}\|.
\end{align*}


Finally, we bound $\|xf'(x)\|$, $\|xf''(x)\|$ and $\|xf^{(3)}(x)\|$ for Lipschitz $h$. We do so through a similar application of the iterative technique of \cite{dgv15} to the one we used to establish inequality (\ref{thm1f2}).  The setting is the same in that (\ref{iteqn}) is the $\mathrm{VG}(r+1,\theta,\sigma,0)$ Stein equation, applied to $f'$, with the test function $h'(x)+f(x)+\theta f'(x)$ having zero mean with respect to the $\mathrm{VG}(r+1,\theta,\sigma,0)$ measure.   We apply inequalities (\ref{thmone}), (\ref{thmtwo}) and (\ref{thmthree}), respectively, with $r$ replaced by $r+1$ and test function $h'(x)+f(x)+\theta f'(x)$, to obtain the bounds
\begin{eqnarray*}\|xf'(x)\|&\leq&\bigg(1+\frac{6}{r+1}+B_{r+1,\theta,\sigma}\bigg)\|h'(x)+f(x)+\theta f'(x)\|\\
&\leq&\bigg(1+\frac{6}{r+1}+B_{r+1,\theta,\sigma}\bigg)\big(\|h'\|+\|f\|+|\theta|\|f'\|\big),
\end{eqnarray*}
\begin{eqnarray*}
\|xf''(x)\|&\leq&\frac{2\sqrt{\theta^2+\sigma^2}}{\sigma^2}\bigg(1+\frac{6}{r+1}+B_{r+1,\theta,\sigma}\bigg)\big(\|h'\|+\|f\|+|\theta|\|f'\|\big),\\
\|xf^{(3)}(x)\|&\leq& \frac{1}{\sigma^2}\bigg\{5+2(r+1)A_{r+1,\theta,\sigma}+\bigg(5+\frac{4\theta^2}{\sigma^2}\bigg)\bigg(1+\frac{6}{r+1}+B_{r+1,\theta,\sigma}\bigg)\bigg\}\\
&&\times\big(\|h'\|+\|f\|+|\theta|\|f'\|\big). 
\end{eqnarray*}
Using inequality (\ref{frfr5}) to bound $\|h'\|+\|f\|+|\theta|\|f'\|$ then yields the bounds (\ref{thmfour})--(\ref{thmsix}).  This completes the proof. \hfill $\Box$

\vspace{3mm}

\noindent{\emph{Proof of Corollary \ref{cortothm1}.}} As in the proof of Theorem \ref{thm1}, we set $\mu=0$. We obtain the bound by using a similar implementation of the iterative technique of \cite{dgv15} to those used in the proof of Theorem \ref{thm1}. Recall that (\ref{iteqn}) is the $\mathrm{VG}(r+1,\theta,\sigma,0)$ Stein equation, applied to $f'$, with the test function $h'(x)+f(x)+\theta f'(x)$ having zero mean with respect to the $\mathrm{VG}(r+1,\theta,\sigma,0)$ measure.   By applying inequality (\ref{thm1f2}) with $r$ replaced by $r+1$ and test function $h'(x)+f(x)+\theta f'(x)$, we obtain the bound
\begin{align*}\|f^{(3)}\|&\leq \frac{1}{\sigma^2}\bigg(\frac{2}{r+2}+A_{r+2,\theta,\sigma}\bigg)\bigg\{1+\bigg(2+\frac{\theta^2}{\sigma^2}\bigg)C_{r+1,\theta,\sigma}\bigg\}\|h''(x)+f'(x)+\theta f''(x)\|\\
&\leq \frac{1}{\sigma^2}\bigg(\frac{2}{r+2}+A_{r+2,\theta,\sigma}\bigg)\bigg\{1+\bigg(2+\frac{\theta^2}{\sigma^2}\bigg)C_{r+1,\theta,\sigma}\bigg\}\big(\|h''\|+\|f'\|+|\theta|\|f''\|\big).
\end{align*}
Bounding $\|f'\|$ and $\|f''\|$ using inequalities (\ref{thm1f1}) and (\ref{thm1f2}), respectively, then yields the desired bound on $\|f^{(3)}\|$. \hfill $\Box$

\begin{remark}\label{appbr}Recall the change of parameters (\ref{parameter}).  The following bound (\ref{dgv1}) was given on p$.$ 24 of \cite{dgv15}, and the bound (\ref{dgv2}) is a slight improvement on one given on p$.$ 24 of \cite{dgv15}:
\begin{eqnarray}\label{dgv1}\|f\|&\leq& \frac{\|\tilde{h}\|}{\sigma^2\alpha}\bigg(\frac{2}{2\nu+1}+M_{\nu,\gamma}\bigg), \\
\label{dgv2}\|f'\|&\leq& \frac{\|\tilde{h}\|}{\sigma^2}\bigg(\frac{2}{2\nu+1}+M_{\nu,\gamma}\bigg),
\end{eqnarray}
where $M_{\nu,\gamma}$ is defined in (\ref{mdefn}).
The improvement in inequality (\ref{dgv2}) for $\|f'\|$ comes from using the integral inequality (\ref{jjj1z00}), which improves on the analogous integral inequality that was used by \cite{dgv15} in deriving their bound for $\|f'\|$. The bounds for $\|f\|$ and $\|f'\|$ of \cite{dgv15} were translated into the $\mathrm{VG}(r,\theta,\sigma,\mu)$ parametrisation on p$.$ 17 of \cite{dgv15} at the cost of two typos. By using the improved integral inequality (\ref{jjj1z00}), we have been able to fix one of the typos made by \cite{dgv15} (this concerns the factor $\frac{2}{r}$ in the bound (\ref{vgsolnunibound1})).  We correct the other typo by correctly bounding $M_{\nu,\gamma}<A_{r,\theta,\sigma}$ (see (\ref{ambamb})), which leads to different, corrected, bounds to those stated by \cite{dgv15}. In obtaining the inequality $M_{\nu,\gamma}<A_{r,\theta,\sigma}$ in Appendix \ref{appb}, we also obtained a slight simplification on the presentation given in \cite{dgv15} by using the upper bound in (\ref{oct8}) to bound a ratio of gamma functions by a power function.
\end{remark}

It is a natural question to ask whether bounds of the form $\|f''\|\leq M_{r,\theta,\sigma}\|\tilde{h}\|$ and $\|f^{(3)}\|\leq M_{r,\theta,\sigma}\|h'\|$, where $M_{r,\theta,\sigma}>0$ is a constant not involving $x$, could be obtained that hold for all bounded and measurable $h:\mathbb{R}\rightarrow\mathbb{R}$, and all Lipschitz $h:\mathbb{R}\rightarrow\mathbb{R}$, respectively. We will show that this is not possible through the following two propositions, which are proved in Section \ref{sec6}. Analogous results for the $\theta=0$ case are given in \cite{gaunt vgii}; our propositions show that no such bounds are attainable for any possible choice of parameter values in the four parameter VG class.  We also refer the reader to \cite{eden2} for similar results concerning solutions of Stein equations for a wide class of distributions.




\begin{proposition}\label{disclem}Denote by $f_z$ the solution to the $\mathrm{VG}(r,\theta,\sigma,\mu)$ Stein equation (\ref{377}) with test function $h(x)=\mathbf{1}(x\leq z)$. Then $f_z'(x)$ is discontinuous at $x=\mu$.
\end{proposition}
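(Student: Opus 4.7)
My plan is to argue by contradiction, specialising to the choice $z=\mu$ so that the jump of $\tilde{h}$ sits exactly at the singular point of the Stein equation. By the translation relation, I set $\mu=0$ without loss of generality; then $h(x)=\mathbf{1}(x\leq 0)$ and $\tilde{h}(0^+)-\tilde{h}(0^-)=-1$. A direct inspection of (\ref{vgsolngeneral0}) shows that $f_z$ itself is continuous at $0$: the $K_\nu$ piece is multiplied by $\int_0^x(\cdot)\,dt$, which vanishes fast enough as $x\to 0$ to kill the singular factor $|x|^{-\nu}K_\nu(\alpha|x|)$, while the $I_\nu$ piece is continuous across $0$.

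Suppose, for contradiction, that $f_z'$ is also continuous at $0$. The crucial step is then to upgrade this assumption to $xf_z''(x)\to 0$ as $x\to 0^\pm$. If instead $\lim_{x\to 0^+}xf_z''(x)=L_+\neq 0$, then $f_z''(x)\sim L_+/x$ near $0^+$, so for any fixed $a>0$,
\[
\int_x^a f_z''(t)\,dt\sim L_+\log(a/x)\qquad(x\to 0^+),
\]
which is unbounded. But $f_z'(x)=f_z'(a)-\int_x^a f_z''(t)\,dt$, so $f_z'$ would be unbounded as $x\to 0^+$, contradicting the assumed continuity. The same argument handles the left-hand limit.

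Now take one-sided limits $x\to 0^\pm$ in the Stein equation
\[
\sigma^2 x f_z''(x)+(\sigma^2 r+2\theta x)f_z'(x)+(r\theta-x)f_z(x)=\tilde{h}(x).
\]
By continuity of $f_z$ and $f_z'$ at $0$ together with $xf_z''(x)\to 0$, the left-hand side tends to $\sigma^2 r f_z'(0)+r\theta f_z(0)$, a single value from both sides. The right-hand side has distinct one-sided limits $\tilde{h}(0^\pm)$; being forced to conclude $\tilde{h}(0^+)=\tilde{h}(0^-)$ contradicts the jump of $-1$. Hence $f_z'$ is discontinuous at $\mu$.

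The main obstacle is the integrability step producing $xf_z''(x)\to 0$; once it is available, the Stein equation immediately yields the contradiction. An alternative, more computational route would differentiate (\ref{vgsolngeneral0}) and use the Wronskian identity $I_\nu(y)K_{\nu+1}(y)+I_{\nu+1}(y)K_\nu(y)=1/y$ to evaluate the one-sided limits of $f_z'$ directly, producing the explicit jump $f_z'(0^+)-f_z'(0^-)=-1/(\sigma^2 r)$.
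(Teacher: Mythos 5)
Your argument is correct in substance but takes a genuinely different route from the paper. The paper differentiates the integral representation (\ref{vgsolngeneral0}) explicitly using (\ref{diff11}) and (\ref{diff22}), isolates the part $J(x)$ that is continuous at the origin, and evaluates the one-sided limits $f_0'(0+)$ and $f_0'(0-)$ directly from the Bessel asymptotics (\ref{Itend0}) and (\ref{Ktend0}), obtaining the explicit jump $f_0'(0+)-f_0'(0-)=-\frac{1}{\sigma^2 r}$ that you anticipate in your closing sentence. You instead argue by contradiction from the differential equation itself: the jump of $\tilde h$ at the singular point of the Stein operator is incompatible with $f_z$ and $f_z'$ both being continuous there. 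Your route avoids all Bessel asymptotics and would apply essentially verbatim to any second-order Stein equation whose leading coefficient vanishes linearly at the jump point of the test function; what it gives up is the explicit value of the jump and the knowledge that the one-sided limits of $f_z'$ actually exist (you only exclude their being finite and equal, which is all that discontinuity requires). Both you and the paper restrict to $z=\mu$, which is the case needed for Remark \ref{rtyu}, and your sketch of the continuity of $f_z$ at the origin is sound (for $-\tfrac12<\nu\leq 0$ the prefactor $|x|^{-\nu}K_\nu(\alpha|x|)$ is not actually singular, but the product with the inner integral still tends to $0$).

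One point of logic should be tightened. The sentence ``if instead $\lim_{x\to0^+}xf_z''(x)=L_+\neq0$'' presupposes that this limit exists, whereas a priori $xf_z''(x)$ could fail to have a limit, and then your dichotomy is not exhaustive. The repair is to reverse the order of your two steps: under the contradiction hypothesis that $f_z$ and $f_z'$ are continuous at $0$, solve the Stein equation for the second-derivative term, $\sigma^2 xf_z''(x)=\tilde h(x)-(\sigma^2 r+2\theta x)f_z'(x)-(r\theta-x)f_z(x)$, and observe that the right-hand side has finite one-sided limits at $0$; hence $L_\pm=\lim_{x\to0^\pm}xf_z''(x)$ do exist and satisfy $\sigma^2(L_+-L_-)=\tilde h(0^+)-\tilde h(0^-)=-1$, so at least one of $L_+$, $L_-$ is nonzero. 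Your logarithmic-divergence argument, applied on that side, then shows $f_z'$ is unbounded near $0$, contradicting the assumed continuity. With that reordering the proof is complete.
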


\begin{proposition}\label{ptpt}Let $f$ denote the solution to the $\mathrm{VG}(r,\theta,\sigma,\mu)$ Stein equation with Lipschitz test function $h:\mathbb{R}\rightarrow\mathbb{R}$. Then there does not exist a positive constant $M_{r,\theta,\sigma}$ such that the bound $\|f^{(3)}\|\leq M_{r,\theta,\sigma}\|h'\|$ holds for all Lipschitz $h:\mathbb{R}\rightarrow\mathbb{R}$.
\end{proposition}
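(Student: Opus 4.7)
The plan is to assume for contradiction that such a constant $M_{r,\theta,\sigma}$ exists and to derive a contradiction by exhibiting a single Lipschitz test function $h$ whose derivative has a jump discontinuity at $\mu$. The natural choice is $h(x)=|x-\mu|$, which satisfies $\|h'\|=1$, with $h'(\mu^+)=+1$ and $h'(\mu^-)=-1$.

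First I would observe that, since $\tilde{h}$ is smooth on $\mathbb{R}\setminus\{\mu\}$ and $f$, $f'$, $f''$ are bounded by Theorem \ref{thm1}, one may solve the Stein equation (\ref{377}) for $f''$ on $\mathbb{R}\setminus\{\mu\}$ and differentiate once more to conclude that $f^{(3)}$ exists classically on $\mathbb{R}\setminus\{\mu\}$. Differentiating (\ref{377}) on this set yields
\begin{equation*}
\sigma^2(x-\mu)f^{(3)}(x)+[\sigma^2(r+1)+2\theta(x-\mu)]f''(x)+[(r+2)\theta-(x-\mu)]f'(x)-f(x)=h'(x),
\end{equation*}
which is valid for $x\neq\mu$.

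The assumed bound $\|f^{(3)}\|\leq M_{r,\theta,\sigma}\|h'\|=M_{r,\theta,\sigma}$ then forces $f''$ to be Lipschitz on each connected component of $\mathbb{R}\setminus\{\mu\}$, so $f''$ extends continuously to all of $\mathbb{R}$, and moreover $\sigma^2(x-\mu)f^{(3)}(x)\to 0$ as $x\to\mu$. The functions $f$ and $f'$ are continuous as well, being differentiable with bounded derivative (again by Theorem \ref{thm1}). Hence the left-hand side of the differentiated Stein equation above tends to the single finite value $\sigma^2(r+1)f''(\mu)+(r+2)\theta f'(\mu)-f(\mu)$ as $x\to\mu$, from both sides. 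But the right-hand side tends to $+1$ as $x\to\mu^+$ and $-1$ as $x\to\mu^-$, a contradiction.

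The main obstacle is the regularity bookkeeping: verifying that $f^{(3)}$ exists classically off $\mu$, that the Stein equation may be differentiated there, and that the Lipschitz extension argument legitimately promotes the bound on $f^{(3)}$ into continuity of $f''$ at $\mu$. All of these are routine once one has the explicit rearrangement of (\ref{377}) giving $f''$ in terms of $\tilde{h}$, $f$, and $f'$, and they do not require any new estimates beyond those already established in Theorem \ref{thm1}.
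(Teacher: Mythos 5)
Your approach is genuinely different from the paper's (which uses the smooth, highly oscillatory test function $h(x)=\sin(ax)/a$ and an asymptotic analysis in the regime $ax\ll1\ll a^2x$), but it has a gap at the step ``$f''$ is Lipschitz on each connected component of $\mathbb{R}\setminus\{\mu\}$, so $f''$ extends continuously to all of $\mathbb{R}$''. A Lipschitz bound on each of $(-\infty,\mu)$ and $(\mu,\infty)$ only guarantees that the one-sided limits $f''(\mu-)$ and $f''(\mu+)$ exist; it does not force them to be equal. In fact, your own differentiated Stein equation, taken in the limits $x\to\mu^{\pm}$, yields
\[
\sigma^2(r+1)f''(\mu\pm)+(r+2)\theta f'(\mu)-f(\mu)=\pm1,
\]
so under your hypotheses the correct conclusion is not a contradiction but the identity $f''(\mu+)-f''(\mu-)=\frac{2}{\sigma^2(r+1)}$: the computation quantifies the jump of $f''$ at $\mu$ (the second-derivative analogue of Proposition \ref{disclem}) and is entirely consistent with $f^{(3)}$ being uniformly bounded on $\mathbb{R}\setminus\{\mu\}$.

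Even if you repair this by concluding only that $f^{(3)}$ fails to exist at the single point $\mu$ when $h(x)=|x-\mu|$, you obtain something weaker than what the proposition asserts. That counterexample is evaded by restricting to $C^1$ Lipschitz test functions, or by reading $\|f^{(3)}\|$ as the supremum over the points where $f^{(3)}$ exists --- which is exactly the reading relevant to applications. The paper's argument rules out even this: for the real-analytic functions $h(x)=\sin(ax)/a$ with $\|h'\|=1$ it shows $\sigma^2f^{(3)}(x)=\frac{a^2x}{2(\nu+2)}+O(1)$ for $x$ with $ax\ll1\ll a^2x$, so $\sup_x|f^{(3)}(x)|$ genuinely blows up as $a\to\infty$, after carefully controlling the remainder terms $R_1$, $R_2$, $R_3$ uniformly in $a$. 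A jump-discontinuity argument of the kind you propose can at best deliver a Proposition-\ref{disclem}-type non-differentiability statement, not the unboundedness claimed in Proposition \ref{ptpt}.
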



We end this section by stating the following proposition, which relates the Kolmogorov and Wasserstein distances between a general distribution and a VG distribution. The proof of Proposition \ref{prop1} is postponed to Section \ref{sec6}. This is a useful result, because, for continuous target distributions, it is typically easier to obtain Wasserstein distance bounds via Stein's method than Kolmogorov distance bounds.  This is indeed the case in our application to the Malliavin-Stein method for VG approximation in Section \ref{sec4}.

\begin{proposition}\label{prop1}Let $Z\sim\mathrm{VG}(r,\theta,\sigma,\mu)$, where $r>0$, $\theta\in\mathbb{R}$, $\sigma>0$ and $\mu\in\mathbb{R}$.  Let $p_{r,\sigma,\theta}(x)$ denote the density (\ref{vgdefn}) with $\mu=0$.  Then, for any random variable $W$:


\vspace{2mm}

\noindent{(i)} If $r>1$,
\begin{equation*}d_{\mathrm{K}}(W,Z)\leq D_{r,\sigma,\theta}\sqrt{d_{\mathrm{W}}(W,Z)},
\end{equation*}
where $D_{r,\sigma,\theta}=\sup_{x\in\mathbb{R}}\sqrt{2p_{r,\sigma,\theta}(x)}$. When $1<r\leq2$ we have
\begin{equation*}D_{r,\sigma,\theta}=\sqrt{\frac{\Gamma(\frac{r-1}{2})}{\sigma\sqrt{\pi} \Gamma(\frac{r}{2})}\bigg(\frac{\sigma^2}{\theta^2+\sigma^2}\bigg)^{\frac{r-1}{2}} },
\end{equation*}
and when $r>2$ we have
\begin{equation*}D_{r,\sigma,\theta}=\sqrt{2p_{r,\sigma,\theta}(x^*\mathrm{sgn}(\theta))}\leq \bigg\{\frac{\Gamma(\frac{r-1}{2})}{\sigma\sqrt{\pi} \Gamma(\frac{r}{2})}\bigg(\frac{\sigma^2}{\theta^2+\sigma^2}\bigg)^{\frac{r-1}{2}} \mathrm{e}^{\frac{\theta^2}{\sigma^2} (r-2)}\bigg\}^{\frac{1}{2}},
\end{equation*}
where $x^*$ is the unique positive solution of (\ref{xstar}). In the case $r>3$, $\theta\not=0$, a more accurate bound on $D_{r,\theta,\sigma}$ can be obtained by bounding $p_{r,\sigma,\theta}(x^*\mathrm{sgn}(\theta))$ using inequality (\ref{vgpdfineq}). 

\vspace{2mm}

\noindent{(ii)} Let $r=1$. Suppose that $\frac{\theta^2+\sigma^2}{\sigma^3} d_{\mathrm{W}}(W,Z)<0.755$.  Then
\begin{equation}\label{pronf2}d_{\mathrm{K}}(W,Z)\leq \bigg\{5+\log\bigg(\frac{6}{\pi}\bigg)+\log\bigg(\frac{\sigma^3}{(\theta^2+\sigma^2)d_{\mathrm{W}}(W,Z)}\bigg)\bigg\}\sqrt{\frac{d_{\mathrm{W}}(W,Z)}{6\pi\sigma}}.
\end{equation}

\noindent{(iii)} If $0<r<1$,
\begin{equation}\label{pronf3}d_{\mathrm{K}}(W,Z)\leq 2\bigg(\frac{\Gamma\big(\frac{1-r}{2}\big)}{\sqrt{\pi}2^{r-1} \Gamma\big(\frac{r}{2}\big)}\bigg)^{\frac{1}{r+1}}\big(\sigma^{-1}d_{\mathrm{W}}(W,Z)\big)^{\frac{r}{r+1}}.
\end{equation}
\end{proposition}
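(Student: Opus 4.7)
The plan is to apply the standard Wasserstein-to-Kolmogorov smoothing argument with a trapezoidal test function and to optimise its width parameter, splitting into three cases according to how singular the VG density may be at $\mu$. Throughout, write $p=p_{r,\sigma,\theta}$.

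For $z\in\mathbb{R}$ and $\alpha>0$, let $h_{z,\alpha}$ be the Lipschitz function equal to $1$ on $(-\infty,z]$, decreasing linearly to $0$ on $[z,z+\alpha]$, and vanishing on $[z+\alpha,\infty)$, so that $\|h_{z,\alpha}'\|=1/\alpha$. Since $\mathbf{1}(\cdot\le z)\le h_{z,\alpha}$,
\begin{equation*}
\mathbb{P}(W\le z)-\mathbb{P}(Z\le z)\le \mathbb{E}h_{z,\alpha}(W)-\mathbb{E}h_{z,\alpha}(Z)+\mathbb{E}[h_{z,\alpha}(Z)-\mathbf{1}(Z\le z)]\le \frac{d_{\mathrm{W}}(W,Z)}{\alpha}+J(z,\alpha),
\end{equation*}
where $J(z,\alpha):=\int_0^\alpha (1-u/\alpha)\,p(z-\mu+u)\,\mathrm{d}u$. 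The matching lower bound follows from $h_{z-\alpha,\alpha}\le\mathbf{1}(\cdot\le z)$, so
\begin{equation*}
d_{\mathrm{K}}(W,Z)\le \frac{d_{\mathrm{W}}(W,Z)}{\alpha}+\sup_{z\in\mathbb{R}}J(z,\alpha).
\end{equation*}
The three parts now come from bounding $\sup_z J(z,\alpha)$ against the appropriate density estimate and optimising $\alpha>0$. The crucial feature is the weight $(1-u/\alpha)$, which produces an extra factor $1/2$ in the density integral in the bounded-density case and yields the sharp constant $D_{r,\sigma,\theta}=\sqrt{2\|p\|_\infty}$ rather than the $2\sqrt{\|p\|_\infty}$ that a cruder one-sided estimate would give.

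For part (i), $r>1$, unimodality of the VG density (recalled in Section \ref{sec2}) gives $\|p\|_\infty=p(M-\mu)$, so $\sup_z J(z,\alpha)\le\tfrac12\alpha\|p\|_\infty$; minimising $d_{\mathrm{W}}/\alpha+\tfrac12\alpha\|p\|_\infty$ at $\alpha^*=\sqrt{2d_{\mathrm{W}}/\|p\|_\infty}$ yields $d_{\mathrm{K}}\le\sqrt{2\|p\|_\infty d_{\mathrm{W}}}=D_{r,\sigma,\theta}\sqrt{d_{\mathrm{W}}}$. For $1<r\le 2$ the mode is at $\mu$, so $\|p\|_\infty$ is read off directly from (\ref{pmutend}); for $r>2$, Proposition \ref{prop2mode} supplies the required upper bound on $p(M-\mu)$, with (\ref{vgpdfineq}) providing the sharper version when $r>3$, $\theta\ne 0$. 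For part (iii), $0<r<1$, the density has a power singularity at $\mu$ by (\ref{pmutend}); the plan is to obtain a global pointwise bound of the form $p(x)\le c_{r,\sigma,\theta}\,|x-\mu|^{r-1}$ from the definition (\ref{vgdefn}) combined with the fact that $y^{|\nu|}K_\nu(y)$ remains bounded as $y\downarrow 0$ when $\nu<0$, whence $\sup_z J(z,\alpha)\le c_{r,\sigma,\theta}\,\alpha^r/(r(r+1))$. Balancing $d_{\mathrm{W}}/\alpha$ against $\alpha^r$ at $\alpha^*\asymp d_{\mathrm{W}}^{1/(r+1)}$ produces the $d_{\mathrm{W}}^{r/(r+1)}$ rate, and routine algebra collapses the constant into the form displayed in (\ref{pronf3}).

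Part (ii), $r=1$, is the delicate case: the density has only a logarithmic singularity at $\mu$. The plan is to derive a pointwise estimate of the form $p(x)\le(1/(\pi\sigma))\bigl[\log(1/|x-\mu|)+c_{\theta,\sigma}\bigr]$ near $\mu$ (plus a uniform bound away from $\mu$), extracted from the series representation of $K_0$ near the origin together with an elementary bound on the exponential tilt $\exp(\theta(x-\mu)/\sigma^2)$; this yields $\sup_z J(z,\alpha)\le (\alpha/(2\pi\sigma))[\log(1/\alpha)+\textrm{const}]$. Balancing against $d_{\mathrm{W}}/\alpha$ at $\alpha^*\asymp\sqrt{6\pi\sigma d_{\mathrm{W}}}$ reproduces both the $\sqrt{d_{\mathrm{W}}/(6\pi\sigma)}$ factor and the logarithmic prefactor in (\ref{pronf2}); the smallness hypothesis on $(\theta^2+\sigma^2)d_{\mathrm{W}}/\sigma^3$ is precisely what keeps $\alpha^*$ inside the neighbourhood where the logarithmic density bound is valid. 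The main obstacle of the whole proof is pinning down the precise numerical constants in this last step—the $5$, the $\log(6/\pi)$, and the explicit threshold $0.755$—which requires careful uniform control of $K_0$ combined with the $\theta\ne 0$ exponential tilt, and an exact rather than asymptotic treatment of the optimisation in $\alpha$.
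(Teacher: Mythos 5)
Your overall architecture --- smooth the indicator over a window, split into three cases according to the singularity of the density at $\mu$, and optimise the window width --- is exactly the paper's strategy (part (i) is essentially a re-derivation of Proposition 1.2 of \cite{ross}, which the paper simply cites, combined with the same density bounds from (\ref{pmutend}) and Proposition \ref{prop2mode}). Part (i) of your argument is correct and complete. The gaps are in parts (ii) and (iii).

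First, your evaluation of $\sup_z J(z,\alpha)$ in the singular cases is wrong. The triangular weight $(1-u/\alpha)$ only buys the extra factor you claim when the density is bounded, or when the singularity sits at the left endpoint $u=0$ of the window; when the singular point $-(z-\mu)$ lies in the \emph{interior} of $[0,\alpha]$ the weighted integral can exceed your claimed supremum. Concretely, for $p(x)\le c|x-\mu|^{r-1}$ with $r=\tfrac12$ and $\alpha=1$, taking $z-\mu=-\tfrac12$ gives $\int_0^1(1-u)|u-\tfrac12|^{-1/2}\,\mathrm{d}u=\sqrt{2}>\tfrac43=\tfrac{1}{r(r+1)}$, so $\sup_zJ(z,\alpha)>\frac{c\alpha^r}{r(r+1)}$. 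Consequently the ``routine algebra'' does not collapse to the constant in (\ref{pronf3}), and the analogous factor $\tfrac12$ in your part (ii) estimate fails for the same reason. The paper avoids this by discarding the weight and instead using unimodality together with positive skewness (after reducing to $\theta\ge0$ via $-Z\sim\mathrm{VG}(r,-\theta,\sigma,0)$) to show that the worst window is the one centred at the mode, reducing everything to a bound on $\mathbb{P}(0\le Z\le\epsilon)$; you need some such rearrangement step.

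Second, part (ii) is left as a plan precisely at the point where the real work lies, as you acknowledge. The paper's mechanism is Lemma \ref{beslem}: the global bound $\mathrm{e}^x x^{\nu}K_{\nu}(x)\le 2^{\nu-1}\Gamma(\nu)$ (which is also what you need in part (iii) to absorb the exponential tilt $\mathrm{e}^{\theta x/\sigma^2}$ --- boundedness of $y^{|\nu|}K_{\nu}(y)$ as $y\downarrow0$ alone does not give a \emph{global} bound $p(x)\le c|x-\mu|^{r-1}$), and for $r=1$ the inequality $\mathrm{e}^xK_0(x)<-3\log x$ valid for $0<x<0.629$, the threshold being found numerically. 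The hypothesis $\frac{\theta^2+\sigma^2}{\sigma^3}d_{\mathrm{W}}(W,Z)<0.755$ is then exactly the condition ($0.755\approx 6(0.629)^2/\pi$) guaranteeing that the optimal width $\epsilon=\sqrt{\pi\sigma d_{\mathrm{W}}/6}$ stays in the range where that logarithmic bound holds, and the constants $5$ and $\log(6/\pi)$ fall out of substituting this $\epsilon$. Without identifying this (or an equivalent) inequality and verifying its range of validity, inequality (\ref{pronf2}) --- including its hypothesis --- is not established.
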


\begin{remark}(i) The assumption that $\frac{\theta^2+\sigma^2}{\sigma^3} d_{\mathrm{W}}(W,Z)<0.755$ is quite mild.  Indeed, if $\theta=0$, then with $d_{\mathrm{W}}(W,Z)/\sigma=0.755$ we see that the upper bound in (\ref{pronf2}) is equal to 1.186, and thus uninformative. It is possible to increase the range of validity of inequality (\ref{pronf2}) (that is increase the numerical constant that $\frac{\theta^2+\sigma^2}{\sigma^3} d_{\mathrm{W}}(W,Z)$ is bounded above by beyond 0.755) at the expense of larger numerical constants in the upper bound.  This can be done by making a minor modification to derivation of inequality (\ref{pronf2}) by applying the more general part (ii) of Lemma \ref{beslem} with $c>3$, rather than part (iii) of that lemma with $c=3$. We proceeded as we did to simplify the statement and proof of part (ii) of Proposition \ref{prop1}.

\vspace{2mm}



\noindent{(ii)} An analogue of Proposition \ref{prop1} was given by \cite[Proposition 4.1]{gaunt vgii} for the $\theta=0$ case.  Our bounds for the general $\theta\in\mathbb{R}$ case take the same functional form in terms of dependence on $d_\mathrm{W}(W,Z)$ as those of \cite{gaunt vgii}. In fact, the bound (\ref{pronf3}), which does not involve $\theta$, is exactly the same as that of  \cite{gaunt vgii} for $0<r<1$ in the $\theta=0$ case. In general, we expect our inequalities to yield suboptimal order Kolmogorov distance bounds.  Indeed, an example has been given in the $\theta=0$ case in which bounds for each of the cases $r>1$, $r=1$ and $0<r<1$ are seen to suboptimal; see Remark 5.2 of \cite{gaunt vgii}.
\end{remark}

\section{Application to the Malliavin-Stein method for variance-gamma approximation}\label{sec4}



In this section, we obtain explicit constants in some of the main results of the paper \cite{eichelsbacher} (see Theorem \ref{etthm} and Corollary \ref{cor5.4} below), which extended the Malliavin-Stein method to the VG distribution. In doing so, we fix a technical issue in that the Wasserstein distance bounds stated in \cite{eichelsbacher} had only been proven in the weaker bounded Wasserstein distance. This is because at the time of \cite{eichelsbacher} the only available bounds for the solution of the VG Stein equation \cite{gaunt thesis, gaunt vg} had a dependence on the test function $h$ that meant that this was the best that could be attained. We also give an illustrative example of the applicability of the general bound in Corollary \ref{cor5.4} by obtaining bounds on the rate of convergence in a recent result of \cite{bt17} concerning the generalized Rosenblatt process at extreme critical exponent.  



We first introduce some notation; see the book \cite{np12} for further details.  We write $\mathbb{D}^{p,q}$ to denote the Banach space of all functions in $L^q(\gamma)$, where $\gamma$ is the standard Gaussian measure, whose Malliavin derivatives up to order $p$ belong to $L^q(\gamma)$.  The class of infinitely many times Malliavin differentiable random variables is denoted by $\mathbb{D}^\infty$. For a random variable $F\in\mathbb{D}^\infty$, we iteratively define the gamma operators $\Gamma_j$ \cite{np10} by $\Gamma_1(F)=F$ and, for $j\geq2$,
\[\Gamma_j(F)=\langle DF, -DL^{-1}\Gamma_{j-1}(F)\rangle_{\mathfrak{H}}.\]
Here $\mathfrak{H}$ is a real separable Hilbert space, $D$ is the Malliavin derivative, and $L^{-1}$ is the pseudo-inverse of the infinitesimal generator of the Ornstein-Uhlenbeck semi-group. 
Let $\mathfrak{H}^{\odot 2}$ denote the second symmetric tensor product of $\mathfrak{H}$.
  For $f\in \mathfrak{H}^{\odot 2}$, the double Wiener-It\^{o} integral is denoted by $I_2(f)$ (see \cite[Definition 2.7.1]{np12}). Some of the most important properties of multiple  Wiener-It\^{o} integrals are given in Section 2.7 of \cite{np12}. Double Wiener-It\^{o} integrals also have several attractive properties and representations that are not shared by higher order multiple Wiener-It\^{o} integrals; see \cite[Section 2.7.4]{np12}.  Recall that we write $\mathrm{VG}_c(r,\theta,\sigma)$ for $\mathrm{VG}(r,\theta,\sigma,-r\theta)$.



\begin{theorem}\label{etthm}Let $F\in\mathbb{D}^{3,8}$ and suppose that $\Gamma_3(F)$ is square-integrable and $\mathbb{E}F=0$. Then, for $Z\sim \mathrm{VG}_c(r,\theta,\sigma)$,
\begin{equation}\label{etqu}d_{\mathrm{W}}(F,Z)\leq C_1\big(\mathbb{E}[(\sigma^2 (F+r\theta)+2\theta\Gamma_2(F)-\Gamma_3(F))^2]\big)^{\frac{1}{2}} +C_2\mathbb{E}|r(\sigma^2+2\theta^2)-\mathbb{E}[\Gamma_2(F)]|,
\end{equation}
where
\begin{align*}C_1&=\frac{1}{\sigma^2}\bigg(\frac{2}{r+1}+A_{r+1,\theta,\sigma}\bigg)\bigg\{1+\bigg(2+\frac{\theta^2}{\sigma^2}\bigg)C_{r,\theta,\sigma}\bigg\}, \quad C_2=\frac{\sqrt{\theta^2+\sigma^2}}{\sigma^2}C_{r,\theta,\sigma},
\end{align*}
with $C_{r,\theta,\sigma}$ defined as in (\ref{cdefn}).
\end{theorem}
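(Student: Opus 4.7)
The plan is to combine the VG Stein equation (\ref{377}) at $\mu=-r\theta$ with the Malliavin integration by parts identities for the gamma operators $\Gamma_j$. The two key identities are that, for a mean-zero $F$ in the appropriate Malliavin domain and a sufficiently regular $\phi$, one has $\mathbb{E}[F\phi(F)]=\mathbb{E}[\phi'(F)\Gamma_2(F)]$, and $\mathbb{E}[\phi(F)\Gamma_2(F)]=\mathbb{E}[\phi(F)]\mathbb{E}[\Gamma_2(F)]+\mathbb{E}[\phi'(F)\Gamma_3(F)]$. My strategy is to apply these in two stages to the Stein solution $f_h$, first absorbing the explicit factors of $F$ in the Stein operator into $\Gamma_2(F)$, then introducing $\Gamma_3(F)$ via a second IBP. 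What remains should split cleanly into the two terms of the claimed bound, after which the Stein factor estimates from Theorem \ref{thm1} supply the constants $C_1$ and $C_2$.

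Concretely, for Lipschitz $h$ with $\|h'\|\le1$, the Stein equation with $\mu=-r\theta$ evaluated at $F$ gives
\begin{equation*}
\mathbb{E}h(F)-\mathbb{E}h(Z)=\mathbb{E}\bigl[\sigma^2(F+r\theta)f_h''(F)+(\sigma^2r+2\theta(F+r\theta))f_h'(F)-Ff_h(F)\bigr].
\end{equation*}
I first replace $\mathbb{E}[Ff_h(F)]$ by $\mathbb{E}[\Gamma_2(F)f_h'(F)]$ and $\mathbb{E}[Ff_h'(F)]$ by $\mathbb{E}[\Gamma_2(F)f_h''(F)]$ via the first identity. Collecting the $f_h''(F)$ and $f_h'(F)$ coefficients, the right-hand side reduces to
\begin{equation*}
\mathbb{E}\bigl[(\sigma^2(F+r\theta)+2\theta\Gamma_2(F))f_h''(F)\bigr]+\mathbb{E}\bigl[(r(\sigma^2+2\theta^2)-\Gamma_2(F))f_h'(F)\bigr].
\end{equation*}
A second application of the Malliavin IBP to the $\Gamma_2(F)f_h'(F)$ part produces $\mathbb{E}[\Gamma_2(F)]\mathbb{E}[f_h'(F)]+\mathbb{E}[\Gamma_3(F)f_h''(F)]$, yielding the key identity
\begin{equation*}
\mathbb{E}h(F)-\mathbb{E}h(Z)=\mathbb{E}\bigl[\bigl(\sigma^2(F+r\theta)+2\theta\Gamma_2(F)-\Gamma_3(F)\bigr)f_h''(F)\bigr]+\bigl(r(\sigma^2+2\theta^2)-\mathbb{E}[\Gamma_2(F)]\bigr)\mathbb{E}[f_h'(F)].
\end{equation*}

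To close the argument I apply the Cauchy--Schwarz inequality to the first term, bounding the $f_h''(F)$ factor in $L^2$ by $\|f_h''\|$; the second term is dominated by $\|f_h'\|\cdot|r(\sigma^2+2\theta^2)-\mathbb{E}[\Gamma_2(F)]|$ since its stochastic factor is constant. Substituting the Stein factor bounds (\ref{thm1f1}) and (\ref{thm1f2}) gives $\|f_h'\|\le C_2\|h'\|$ and $\|f_h''\|\le C_1\|h'\|$ with exactly the $C_1,C_2$ of the statement, and taking the supremum over Lipschitz $h$ with $\|h'\|\le1$ yields the desired Wasserstein estimate. The main obstacle is the rigorous justification of the Malliavin IBP identities applied to $f_h$ and its derivatives, which are only as smooth as the Stein equation allows for a general Lipschitz $h$; this is precisely where the assumptions $F\in\mathbb{D}^{3,8}$ and square-integrability of $\Gamma_3(F)$ enter, ensuring all the random variables appearing above lie in $L^2$ and allowing the identities to be derived first for smooth $h$ and then extended to the Lipschitz case by a standard mollification and passage to the limit controlled by the uniform Stein factor bounds.
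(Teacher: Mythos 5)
Your proposal is correct and follows essentially the same route as the paper: the paper simply cites the key identity $\mathbb{E}h(F)-\mathbb{E}h(Z)=\mathbb{E}\big[\big(\sigma^2(F+r\theta)+2\theta\Gamma_2(F)-\Gamma_3(F)\big)f_h''(F)\big]+\big(r(\sigma^2+2\theta^2)-\mathbb{E}[\Gamma_2(F)]\big)\mathbb{E}[f_h'(F)]$ from the proof of Theorem 4.1 of \cite{eichelsbacher} rather than rederiving it via your two Malliavin integration-by-parts steps, and then, exactly as you do, applies the Cauchy--Schwarz inequality and substitutes the Stein factor bounds (\ref{thm1f1}) and (\ref{thm1f2}) with $\|h'\|=1$ to obtain $C_1$ and $C_2$. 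The only cosmetic difference is that you supply the derivation of the identity and a mollification argument for general Lipschitz $h$, both of which the paper delegates to the cited reference.
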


\begin{proof} It was shown in the proof of Theorem 4.1 of \cite{eichelsbacher} that, for functions $f:\mathbb{R}\rightarrow\mathbb{R}$ that are twice differentiable with bounded first and second derivative,
\begin{align}&\big|\mathbb{E}\big[\sigma^2Ff''(F)+\sigma^2 rf'(F)-Ff(F)\big]\big|\nonumber\\
&\quad=\big|\mathbb{E}\big[f''(F)(\sigma^2 (F+r\theta)+2\theta\Gamma_2(F)-\Gamma_3(F))+f'(F)(r\sigma^2+2r\theta^2-\mathbb{E}[\Gamma_2(F)])\big]\big|\nonumber  \\
&\quad\leq \|f''\|\mathbb{E}|\sigma^2 (F+r\theta)+2\theta\Gamma_2(F)-\Gamma_3(F)|+\|f'\|\mathbb{E}|r(\sigma^2+2\theta^2)-\mathbb{E}[\Gamma_2(F)]|\nonumber\\
&\quad\leq \|f''\|\big(\mathbb{E}[(\sigma^2 (F+r\theta)+2\theta\Gamma_2(F)-\Gamma_3(F))^2]\big)^{\frac{1}{2}}+\|f'\|\mathbb{E}|r(\sigma^2+2\theta^2)-\mathbb{E}[\Gamma_2(F)]|,\nonumber
\end{align} 
where a justification of the application of the Cauchy-Schwarz inequality in the final step is given in \cite{eichelsbacher}.
We know from Theorem \ref{thm1} that, for $h\in\mathcal{H}_{\mathrm{W}}$,  the solution $f$ of the $\mathrm{VG}_c(r,\theta,\sigma)$ Stein equation satisfies the conditions of being twice differentiable with bounded first and second derivatives. We can bound $\|f''\|$ and $\|f'\|$ using the estimates (\ref{thm1f2}) and (\ref{thm1f1}) of Theorem \ref{thm1} (with $\|h'\|=1$), which gives (\ref{etqu}). 
\end{proof}

\begin{corollary}\label{cor5.4} 
Consider the sequence $(F_n=I_2(f_n)\,:\,n\geq1)$ with $f_n\in \mathfrak{H}^{\odot 2}$, $n\geq1$.
Let $Z\sim\mathrm{VG}_c(r,\theta,\sigma)$ and write $\tilde{\kappa}_i(F_n):=\kappa_i(F_n)-\kappa_i(Z)$, $i=2,3,4,5,6$. Then
\begin{align}d_{\mathrm{W}}(F_n,Z)&\leq C_1\bigg(\frac{1}{120}\kappa_6(F_n)-\frac{\theta}{6}\kappa_5(F_n)+\frac{1}{3}(2\theta^2-\sigma^2)\kappa_4(F_n)+(2-r)\theta\sigma^2\kappa_3(F_n)\nonumber \\
&\quad+\frac{1}{4}(\kappa_3(F_n))^2-2\theta\kappa_2(F_n)\kappa_3(F_n)+(\sigma^4+4r\theta^2\sigma^2)\kappa_2(F_n)\nonumber \\
\label{doww}&\quad+4\theta^2(\kappa_2(F_n))^2+r^2\theta^2\sigma^4\bigg)^{\frac{1}{2}}+C_2\mathbb{E}|r(\sigma^2+2\theta^2)-\kappa_2(F_n)|,\\
&\leq C_1\bigg(\frac{1}{\sqrt{120}}\sqrt{|\tilde{\kappa}_6(F_n)|}+\frac{\sqrt{|\theta|}}{\sqrt{6}}\sqrt{|\tilde{\kappa}_5(F_n)|}+\frac{\sqrt{|2\theta^2-\sigma^2|}}{\sqrt{3}}\sqrt{|\tilde{\kappa}_4(F_n)|}\nonumber\\
&\quad+\sigma\sqrt{|(2-r)\theta|}\sqrt{|\tilde{\kappa}_3(F_n)|}+\frac{1}{2}|\tilde{\kappa}_3(F_n)|
+\sigma\sqrt{\sigma^2+4r\theta^2}\sqrt{|\tilde{\kappa}_2(F_n)|} \nonumber\\
\label{doww2}&\quad+2|\theta||\tilde{\kappa}_2(F_n)|+\sqrt{2|\theta|}\sqrt{|\kappa_2(F_n)\kappa_3(F_n)-\kappa_2(Z)\kappa_3(Z)|}\bigg)+C_2|\tilde{\kappa}_2(F_n)|,
\end{align}
where $C_1$ and $C_2$ are defined as in Theorem \ref{etthm}. 

Bounds on $d_{\mathrm{K}}(F_n,Z)$ follow immediately from combining inequality (\ref{doww}) or inequality (\ref{doww2}) and the bounds of Proposition \ref{prop1} (with different bounds being used according to the value of $r$).
\end{corollary}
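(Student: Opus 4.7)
The plan is to apply Theorem \ref{etthm} with $F = F_n = I_2(f_n)$ and then rewrite both error terms in terms of the cumulants $\kappa_i(F_n)$, $i = 2,\ldots,6$. The key observation is that for an element of the second Wiener chaos, the quantities $\mathbb{E}[\Gamma_j(F_n)]$, $\mathbb{E}[F_n\Gamma_j(F_n)]$ and $\mathbb{E}[\Gamma_i(F_n)\Gamma_j(F_n)]$ are expressible as explicit linear combinations of $\kappa_2(F_n),\ldots,\kappa_6(F_n)$ (these identities are essentially those collected in \cite{eichelsbacher}; in particular the normalisation gives $\mathbb{E}[\Gamma_2(F_n)]=\kappa_2(F_n)/2$, so that the second term of Theorem \ref{etthm} immediately becomes a constant multiple of $|\tilde{\kappa}_2(F_n)|$ after recalling $\kappa_2(Z)=r(\sigma^2+2\theta^2)$).

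First I would expand the square
\[
\mathbb{E}\big[\big(\sigma^2(F_n+r\theta)+2\theta\Gamma_2(F_n)-\Gamma_3(F_n)\big)^2\big]
\]
as a quadratic form in $F_n,\Gamma_2(F_n),\Gamma_3(F_n)$ and the constant $r\theta\sigma^2$. Substituting the cumulant identities for $\Gamma_2,\Gamma_3$ on the second Wiener chaos converts the resulting ten expectations into a polynomial in $\kappa_2(F_n),\ldots,\kappa_6(F_n)$. After collection of terms this polynomial is precisely the expression inside the square root in (\ref{doww}); the key sanity check at this stage is that when $\kappa_i(F_n)=\kappa_i(Z)$ for $i=2,\ldots,6$, substitution of the explicit formulas for the cumulants of $Z$ listed at the end of Section \ref{sec2} produces identically $0$, since $F_n=Z$ must give a vanishing bound. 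This check pins down the coefficients and yields (\ref{doww}).

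To deduce (\ref{doww2}) I would rewrite the polynomial of (\ref{doww}) as a linear combination of the centred differences $\tilde{\kappa}_i(F_n)=\kappa_i(F_n)-\kappa_i(Z)$. Writing each monomial $\kappa_i(F_n)=\kappa_i(Z)+\tilde{\kappa}_i(F_n)$ and $\kappa_i(F_n)\kappa_j(F_n)=(\kappa_i(Z)+\tilde{\kappa}_i(F_n))(\kappa_j(Z)+\tilde{\kappa}_j(F_n))$ and expanding, all terms without any $\tilde{\kappa}_i$ cancel by the vanishing identity above. Applying the subadditivity $\sqrt{a+b}\le\sqrt{a}+\sqrt{b}$ term by term, together with crude bounds such as $|\kappa_i(Z)|\le C_{r,\theta,\sigma}$ on the remaining coefficients, delivers the sum of square roots and absolute values displayed in (\ref{doww2}). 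The Kolmogorov bound stated at the end of the corollary is then an immediate combination of (\ref{doww}) or (\ref{doww2}) with the appropriate case of Proposition \ref{prop1}, selected according to whether $r>1$, $r=1$ or $0<r<1$.

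The main obstacle is the bookkeeping in the first step: expanding the square and substituting the cumulant identities on the second Wiener chaos is algebraically heavy, and one must be careful to use the product formula for multiple Wiener-It\^{o} integrals with the normalisation conventions fixed in \cite{eichelsbacher}. However, because the polynomial is uniquely determined by the requirement that it vanish at $\kappa_i(F_n)=\kappa_i(Z)$ and match the expansion of the quadratic form at leading order, the zero-check at $F_n=Z$ is both a guide and a validation of the computation, making the step manageable rather than open-ended.
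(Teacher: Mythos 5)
Your overall route is the same as the paper's: apply Theorem \ref{etthm} to $F_n$, express both error terms through the cumulants $\kappa_2(F_n),\ldots,\kappa_6(F_n)$, recentre around the cumulants of $Z$ and apply subadditivity of the square root to get (\ref{doww2}), then invoke Proposition \ref{prop1} for the Kolmogorov bound. One presentational difference: the paper does not re-derive the polynomial identity for $\mathbb{E}[(\sigma^2(F_n+r\theta)+2\theta\Gamma_2(F_n)-\Gamma_3(F_n))^2]$ from the product formula; it quotes it from the proof of Theorem 5.8 of \cite{eichelsbacher}, so the ``algebraically heavy'' step you anticipate is in fact a citation.

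Two points in your write-up would fail as stated. First, the normalisation $\mathbb{E}[\Gamma_2(F_n)]=\kappa_2(F_n)/2$ is wrong: with the paper's convention $\Gamma_1(F)=F$ and $\Gamma_2(F)=\langle DF,-DL^{-1}F\rangle_{\mathfrak{H}}$, one has $\mathbb{E}[\Gamma_2(F_n)]=\mathbb{E}[F_n^2]=\kappa_2(F_n)$ exactly (this is the identity $\kappa_{j+1}(F)=j!\,\mathbb{E}[\Gamma_j(F)]$ of \cite{np10} read in the paper's indexing). With your extra factor $1/2$ the second term of (\ref{etqu}) becomes $C_2|r(\sigma^2+2\theta^2)-\kappa_2(F_n)/2|$, which is not $C_2|\tilde{\kappa}_2(F_n)|$, so the stated bound would not follow. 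Second, the ``vanishing at $F_n=Z$'' check cannot be used to pin down the coefficients: for general $r>0$ the law $\mathrm{VG}_c(r,\theta,\sigma)$ is not realised as a double Wiener--It\^{o} integral, and even if it were, equality of the cumulants $\kappa_2,\ldots,\kappa_6$ does not force $F_n=Z$, nor does it force $\sigma^2(F_n+r\theta)+2\theta\Gamma_2(F_n)-\Gamma_3(F_n)$ to vanish almost surely. That the polynomial vanishes when the cumulants match is a \emph{consequence} of the completed computation (it is exactly what the paper's re-expression of $G_{r,\theta,\sigma}(F_n)$ in terms of the $\tilde{\kappa}_i$ exhibits), not a constraint you may impose a priori; you must either carry the expansion through in full or cite it, and use the zero-check only as a consistency test afterwards.
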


\begin{proof}It is well-known that $\mathbb{E}[\Gamma_2(F_n)]=\kappa_2(F_n)$ (see \cite{np10}), and the equality 
\begin{align*}&\mathbb{E}[(\sigma^2 (F_n+r\theta)+2\theta\Gamma_2(F_n)-\Gamma_3(F_n))^2] \\
&\quad=\frac{1}{120}\kappa_6(F_n)-\frac{\theta}{6}\kappa_5(F_n)+\frac{1}{3}(2\theta^2-\sigma^2)\kappa_4(F_n)+(2-r)\theta\sigma^2\kappa_3(F_n)\\
&\quad\quad+\frac{1}{4}(\kappa_3(F_n))^2-2\theta\kappa_2(F_n)\kappa_3(F_n)+(\sigma^4+4r\theta^2\sigma^2)\kappa_2(F_n)+4\theta^2(\kappa_2(F_n))^2+r^2\theta^2\sigma^4\\
&\quad=:G_{r,\theta,\sigma}(F_n)
\end{align*}
was shown in the proof of Theorem 5.8 of \cite{eichelsbacher}. Substituting these formulas into (\ref{etqu}) gives us (\ref{doww}). 

A simple calculation using the $\mathrm{VG}_c(r,\theta,\sigma)$ cumulant formulas given at the end of Section \ref{sec2} gives that
\begin{align*}G_{r,\theta,\sigma}(F_n)&=\frac{1}{120}\tilde{\kappa}_6(F_n)-\frac{\theta}{6}\tilde{\kappa}_5(F_n)+\frac{1}{3}(2\theta^2-\sigma^2)\tilde{\kappa}_4(F_n)+(2-r)\theta\sigma^2\tilde{\kappa}_3(F_n)\\
&\quad-2\theta\big(\kappa_2(F_n)\kappa_3(F_n)-\kappa_2(Z)\kappa_3(Z)\big)+(\sigma^4+4r\theta^2\sigma^2)\tilde{\kappa}_2(F_n)\\
&\quad+4\theta^2(\tilde{\kappa}_2(F_n))^2.
\end{align*}
Plugging this formula into the upper bound (\ref{doww}), using that $\kappa_2(Z)=r(\sigma^2+2\theta^2)$, and then using the triangle inequality gives us (\ref{doww2}).
\end{proof}

\begin{remark}\label{rtyu}We expect that, for any $r>0$, our bound on $d_{\mathrm{K}}(F_n,Z)$ will be of sub-optimal order.  It is not possible to easily adapt the proof of Theorem \ref{etthm} to obtain Kolmogorov distance bounds with the same rate of convergence as the Wasserstein distance bounds (\ref{etqu}) and (\ref{doww}). This is because the first derivative of the solution $f_z$ of the $\mathrm{VG}_c(r,\theta,\sigma)$ Stein equation with test function $h_z(x)=\mathbf{1}(x\leq z)$ has a discontinuity (see Proposition \ref{disclem}).  This is in contrast to the case of normal approximation, for which bounds on the first derivative of the solution of the normal Stein equation suffice, and optimal order Kolmogorov distance bounds have been obtained \cite{np15}.   
\end{remark}

\begin{remark}Consider the smooth Wasserstein distance $d_{\mathcal{H}_2}(F,G)$ between the distributions of two random elements $F$ and $G$, defined by
\[d_{\mathcal{H}_2}(F,G):=\sup_{h\in\mathcal{H}_2}|\mathbb{E}h(F)-\mathbb{E}h(G)|,\]
where $\mathcal{H}_2=\{h:\mathbb{R}\rightarrow\mathbb{R}\,|\,\text{$h'$ is Lipschitz, $\|h'\|\leq1$, $\|h''\|\leq1$}\}$ (see \cite{arrasetal,dp18}). Note that $d_{\mathcal{H}_2}(F,G)\leq d_{\mathrm{W}}(F,G)$ for any random elements $F$ and $G$ such that $d_{\mathrm{W}}(F,G)$ is well-defined. Let $F_n$ and $Z$ be defined as in Corollary \ref{cor5.4}. In addition, define 
\[\mathbf{M}(F_n)=\max\{|\tilde{\kappa}_i(F_n)|\,:\,i=2,3,4,5,6\}.\]
Recently, 
\cite{aet21} have obtained the following rather beautiful VG approximation with optimal rate of convergence: There exist constants $K_1,K_2>0$ only depending on $r$, $\theta$ and $\sigma$ such that
\begin{equation}\label{optvg}K_1\mathbf{M}(F_n)\leq d_{\mathcal{H}_2}(F_n,Z)\leq K_2\mathbf{M}(F_n).
\end{equation}
The upper bound in (\ref{optvg}) improves the bound of Corollary \ref{cor5.4} by removing the square root factor. This improvement comes at the expense of being given with respect to the weaker $d_{\mathcal{H}_2}$ metric. As part of their proof, \cite{aet21} utilised bounds from Theorem \ref{thm1} and Corollary \ref{cortothm1}. In the light of Proposition \ref{ptpt}, it seems that a quite different approach to the one used by \cite{aet21} would be needed to achieve a bound of the form $d_{\mathrm{W}}(F_n,Z)\leq K\mathbf{M}(F_n)$, assuming such a result holds.
\end{remark}

A number of special and limiting cases of VG distributions are given in Proposition 1.2 of \cite{gaunt vg}, and Corollary \ref{cor5.4} can be specialised to these cases. We note two illustrative examples.

\begin{example}The $\mathrm{VG}_c(r,0,\sigma/\sqrt{r})$ distribution converges to the $N(0,\sigma^2)$ distribution as $r\rightarrow\infty$.  It is readily seen that $\lim_{r\rightarrow\infty}A_{r,0,\sigma/\sqrt{r}}=0$ and $\lim_{r\rightarrow\infty}C_{r,0,\sigma/\sqrt{r}}=6$. In this limit, we have that $C_1=2(1+2\cdot6)=26$. Let $F_n=I_2(f_n)$ and suppose that $\mathbb{E}[F_n^2]=\sigma^2$. Then, with $Z\sim N(0,\sigma^2)$, we obtain from (\ref{doww2}) the bound
\begin{align*}d_{\mathrm{W}}(F_n,Z)\leq26\bigg(\frac{1}{\sqrt{120}}\sqrt{|\tilde{\kappa}_6(F_n)|}+\frac{\sigma}{\sqrt{3}}\sqrt{|\tilde{\kappa}_4(F_n)|}+\frac{1}{2}|\kappa_3(F_n)|\bigg).
\end{align*}
As expected, given its derivation  from a general theorem for VG approximation, this result is weaker than the quantitative Gaussian fourth moment theorem of \cite{np09}. It is worth noting that, for $F_n=I_2(f_n)$ and $Z\sim N(0,1)$, we have that, for $k\geq3$, $|\mathbb{E}[F_n^k]-\mathbb{E}[Z^k]|\leq c_{k}\sqrt{\mathbb{E}[F_n^4]-3}$, where $c_k>0$ is an explicit constant depending only on $k$ (see \cite{np10b}). Therefore, for $k=,3,6$, $\tilde{\kappa}_k(F_n)\leq c_k'\sqrt{\mathbb{E}[F_n^4]-3}$ (for some $c_k'>0$), which is consistent with the famous condition of \cite{np05} that convergence in distribution of a sequence of random variables, with zero mean and unit variance, living in a Wiener chaos of fixed order to the standard Gaussian distribution occurs if and only if the sequence of fourth moments convergences to that of a $N(0,1)$ random variable.
\end{example}

\begin{example}The $\mathrm{VG}_c(2,0,\sigma)$ distribution corresponds to the $\mathrm{Laplace}(0,\sigma)$ distribution with density $p(x)=\frac{1}{2\sigma}\mathrm{e}^{-|x|/\sigma}$, $x\in\mathbb{R}$.  We have that $A_{2,0,\sigma}=\frac{2\sqrt{\pi}}{\sqrt{3}}$ and $C_{2,0,\sigma}=8+\frac{16\sqrt{\pi}}{\sqrt{3}}$. In this case, we have $C_1=\frac{134.978\ldots}{\sigma^2}<\frac{135}{\sigma^2}$. Let $F_n=I_2(f_n)$ be such that $\mathbb{E}[F_n^2]=2\sigma^2$. Then, with $Z\sim\mathrm{Laplace}(0,\sigma)$, we obtain from (\ref{doww2}) the bound
\begin{align*}d_{\mathrm{W}}(F_n,Z)\leq\frac{135}{\sigma^2}\bigg(\frac{1}{\sqrt{120}}\sqrt{|\tilde{\kappa}_6(F_n)|}+\frac{\sigma}{\sqrt{3}}\sqrt{|\tilde{\kappa}_4(F_n)|}+\frac{1}{2}|\kappa_3(F_n)|\bigg).
\end{align*}
\end{example}



We end this section by demonstrating how Corollary \ref{cor5.4} can be used to obtain bounds on the rate of convergence in a recent result of \cite{bt17}.

\begin{example}[The generalized Rosenblatt process at extreme critical
exponent] Consider the Rosenblatt process $Z_{\gamma_1,\gamma_2}(t)$, introduced by \cite{mt12} as the double Wiener-It\^{o} integral 
\begin{equation*}Z_{\gamma_1,\gamma_2}(t)=\int_{\mathbb{R}^2}^{\prime}\bigg(\int_0^t(s-x_1)_+^{\gamma_1}(s-x_2)_+^{\gamma_2}\,\mathrm{d}s\bigg)\,\mathrm{d}B_{x_1}\,\mathrm{d}B_{x_2},
\end{equation*}
where the prime $\prime$ indicates exclusion of the diagonals $x_1=x_2$ in the stochastic integral, $B_{x}$ is standard Brownian motion and $\gamma_i\in(-1,-\frac{1}{2})$, $i=1,2$, and $\gamma_1+\gamma_2>-\frac{3}{2}$. The Rosenblatt process \cite{t75} is the special case $Z_\gamma(t)=Z_{\gamma,\gamma}(t)$, $-\frac{3}{4}<\gamma<-\frac{1}{2}$. By a change of variables and using the scale invariant property of Brownian motion it can be shown that
\begin{equation*}Z_{\gamma_1,\gamma_2}(t)=t^{2+\gamma_1+\gamma_2}\int_{\mathbb{R}^2}^{\prime}\bigg(\int_0^1(s-x_1)_+^{\gamma_1}(s-x_2)_+^{\gamma_2}\,\mathrm{d}s\bigg)\,\mathrm{d}B_{x_1}\,\mathrm{d}B_{x_2},
\end{equation*}
and so $Z_{\gamma_1,\gamma_2}(t)=_d t^{2+\gamma_1+\gamma_2}Z_{\gamma_1,\gamma_2}(1)$. From now on, for simplicity, we will work with the random variable $Z_{\gamma_1,\gamma_2}(1)$; results for the general $t>0$ case can be inferred from a rescaling.
For $\rho\in(0,1)$, define the random variable $Y_\rho$ by
\begin{equation*}Y_\rho=\frac{a_\rho}{\sqrt{2}}(X_1-1)-\frac{b_\rho}{\sqrt{2}}(X_2-1),
\end{equation*}
where $X_1$ and $X_2$ are independent $\chi_{(1)}^2$ random variables and 
\begin{align*}a_\rho=\frac{(2\sqrt{\rho})^{-1}+(\rho+1)^{-1}}{\sqrt{(2\rho)^{-1}+2(\rho+1)^{-2}}}, \quad b_\rho=\frac{(2\sqrt{\rho})^{-1}-(\rho+1)^{-1}}{\sqrt{(2\rho)^{-1}+2(\rho+1)^{-2}}}.
\end{align*}
We follow \cite{aaps17} and suppose for simplicity that $\gamma_1\geq\gamma_2$ and that $\gamma_2=(\gamma_1+\frac{1}{2})/\rho-\frac{1}{2}$.

It was recently shown by \cite{aaps17} that, as $\gamma_1\rightarrow-\frac{1}{2}$,
\begin{equation}\label{dw2}d_{\mathrm{W}_2}(Z_{\gamma_1,\gamma_2}(1),Y_\rho)\leq C_\rho\sqrt{-\gamma_1-\frac{1}{2}},
\end{equation}
where $C_\rho>0$ is a constant depending solely on $\rho$ and $d_{\mathrm{W}_2}$ is the Wasserstein-2 distance.
(Note that if $\gamma_1\rightarrow-\frac{1}{2}$, then automatically $\gamma_2\rightarrow-\frac{1}{2}$.) Working with respect to the weaker $d_{\mathcal{H}_2}$ metric, \cite{aet21} have very recently obtained a faster rate of convergence: as $\gamma_1\rightarrow-\frac{1}{2}$,
\[d_{\mathcal{H}_2}(Z_{\gamma_1,\gamma_2}(1),Y_\rho)\leq C_\rho\bigg|-\gamma_1-\frac{1}{2}\bigg|\]
 With these results, \cite{aaps17} and \cite{aet21} have given bounds on the rate of convergence in a recent limit theorem of \cite[Theorem 2.4]{bt17}. To obtain the bound (\ref{dw2}), \cite{aaps17} showed that, for any $m\geq2$, as $\gamma_1\rightarrow-\frac{1}{2}$,
\begin{equation}\label{sfbfbf}\kappa_m(Z_{\gamma_1,\gamma_2}(1))=\kappa_m(Y_\rho)+O\Big(-\gamma-\frac{1}{2}\Big),
\end{equation}
and inserted this asymptotic relation into a general Wasserstein-2 distance bound (expressed in terms of cumulants) in which the limit distribution can be represented as linear combinations of centered chi-square random variables. (The statement of the asymptotic relation (\ref{sfbfbf}) in \cite{aaps17} is only given for $m\geq3$, but on inspecting their proof it can be seen that the asymptotic relation is also valid in the case $m=2$.) Similarly, \cite{aet21}, substituted (\ref{sfbfbf}) into the upper bound of (\ref{optvg}).  As the Wasserstein-2 metric is stronger than the Wasserstein metric, it is immediate that, as $\gamma_1\rightarrow-\frac{1}{2}$,
\begin{equation}\label{dw22}d_{\mathrm{W}}(Z_{\gamma_1,\gamma_2}(1),Y_\rho)\leq C_\rho\sqrt{-\gamma_1-\frac{1}{2}}.
\end{equation}

We now show that we can apply bound (\ref{doww2}) of Corollary \ref{cor5.4} to obtain an alternative proof  of (\ref{dw22}). This is a weaker result than that of \cite{aaps17}, but the example is useful in demonstrating the applicability of Corollary \ref{cor5.4}. We also apply Proposition \ref{prop1} to obtain a bound on the rate of convergence in the Kolmogorov distance, which is a new result.

We first recognise $Y_\rho$ as a VG random variable. Let $\Gamma(r,\lambda)$ denote a gamma random variable with density $p(x)=\frac{1}{\Gamma(r)}\lambda^rx^{r-1}\mathrm{e}^{-\lambda x}$, $x>0$. Then, Proposition 1.2 of \cite{gaunt vg} tells us that if $G_1\sim\Gamma(r,\lambda_1)$ and $G_2\sim\Gamma(r,\lambda_2)$ are independent, then $G_1-G_2\sim\mathrm{VG}(r,(2\lambda_1)^{-1}-(2\lambda_2)^{-1},(\lambda_1\lambda_2)^{-1/2},0)$. Observe that $a_\rho>0$, $b_\rho>0$ and that, since $\chi_{(1)}^2=_d\Gamma(\frac{1}{2},\frac{1}{2})$, we have that $\frac{a_\rho}{\sqrt{2}}X_1\sim \Gamma(\frac{1}{2},\frac{1}{\sqrt{2}a_\rho})$ and $\frac{b_\rho}{\sqrt{2}}X_2\sim \Gamma(\frac{1}{2},\frac{1}{\sqrt{2}b_\rho})$.  Therefore, $Y_\rho\sim \mathrm{VG}_c(1,\frac{a_\rho-b_\rho}{\sqrt{2}},\sqrt{2a_\rho b_\rho})$. As $Z_{\gamma_1,\gamma_2}(1)$ is a double Wiener-It\^{o} integral (and hence also satisfies $\mathbb{E}[Z_{\gamma_1,\gamma_2}(1)]=0$), we may apply bound (\ref{doww2}) of Corollary \ref{cor5.4} together with the asymptotic relation (\ref{sfbfbf}) to obtain (\ref{dw22}).  By part (ii) of Proposition \ref{prop1} (note that here $r=1$) we can then obtain that,
as $\gamma_1\rightarrow-\frac{1}{2}$,
\begin{equation*}d_{\mathrm{K}}(Z_{\gamma_1,\gamma_2}(1),Y_\rho)\leq C_\rho'\bigg(-\gamma_1-\frac{1}{2}\bigg)^{\frac{1}{4}}\log\bigg(\frac{1}{-\gamma_1-\frac{1}{2}}\bigg),
\end{equation*}
where $C_\rho'>0$ depends only on $\rho$.
\end{example}


\section{Further proofs}\label{sec6}

\noindent\emph{Proof of Proposition \ref{prop2mode}.} We first prove inequality (\ref{vgpdfineq2}). We prove the result for the case $\theta\not=0$ and then treat the case $\theta=0$. Let $r>2$ and $\sigma>0$. We prove the result for $\mu=0$; the extension to general $\mu\in\mathbb{R}$ is obvious. We also fix $\theta>0$; the case $\theta<0$ is very similar because, for $Z\sim\mathrm{VG}(r,\theta,\sigma,0)$, we have that $-Z\sim\mathrm{VG}(r,-\theta,\sigma,0)$.  Recall that, for $x>0$, the $\mathrm{VG}(r,\theta,\sigma,0)$ density is given by
\begin{equation*}p(x) = \frac{1}{\sigma\sqrt{\pi} \Gamma(\frac{r}{2})}\bigg(\frac{\sigma^2}{2(\theta^2+\sigma^2)}\bigg)^{\frac{r-1}{2}} u(x)v(x),
\end{equation*}
where
\begin{align*}u(x)=\mathrm{e}^{\frac{\theta}{\sigma^2} x}, \quad v(x)=\bigg(\frac{\sqrt{\theta^2 + \sigma^2}}{\sigma^2} x\bigg)^{\frac{r-1}{2}} K_{\frac{r-1}{2}}\bigg(\frac{\sqrt{\theta^2 + \sigma^2}}{\sigma^2}x \bigg).
\end{align*}
It suffices to consider $x>0$, because the mode of the $\mathrm{VG}(r,\theta,\sigma,0)$ distribution is strictly positive for $\theta>0$.  By the upper bound in (\ref{gmineq}), we know that $M<\theta(r-2)$. Now, $u(x)$ is a strictly increasing function of $x$ on $(0,\infty)$, and $v(x)$ is a strictly decreasing function of $x$ on $(0,\infty)$ (see (\ref{ddbk})). 
Therefore, for all $x>0$,
\begin{align*}p(x) < \frac{1}{\sigma\sqrt{\pi} \Gamma(\frac{r}{2})}\bigg(\frac{\sigma^2}{2(\theta^2+\sigma^2)}\bigg)^{\frac{r-1}{2}} u(\theta(r-2))\lim_{x\downarrow0}v(x),
\end{align*}
where $\lim_{x\downarrow0}v(x)$ can be calculated using the limiting form (\ref{Ktend0}). As it sufficed to consider $x>0$ and $\theta>0$, the proof of inequality (\ref{vgpdfineq2}) is complete for the case $\theta\not=0$. To extend the range of validity of inequality (\ref{vgpdfineq2}) to $\theta\in\mathbb{R}$, we use that in the $\theta=0$ case the mode of the $\mathrm{VG}(r,\theta,\sigma,0)$ distribution is 0.  Letting $x\rightarrow0$ in the VG density using (\ref{Ktend0}) gives that, for $r>2$, $\theta=0$, $\sigma>0$,
\[p(x)\leq\lim_{x\rightarrow0}p(x)=\frac{\Gamma(\frac{r-1}{2})}{2\sigma\sqrt{\pi} \Gamma(\frac{r}{2})},\]
which verifies that inequality (\ref{vgpdfineq2}) is also valid for $\theta=0$.  

We now prove inequality (\ref{vgpdfineq}). Suppose that $r>3$ and $\theta\not=0$.  As in the proof of inequality (\ref{vgpdfineq2}) it will suffice the treat the case $\theta>0$ and $x>0$. By the two-sided inequality (\ref{gmineq}), we know that $\theta(r-3)<M<\theta(r-2)$. Recall that $u(x)$ is a strictly increasing function of $x$ on $(0,\infty)$, and $v(x)$ is a strictly decreasing function of $x$ on $(0,\infty)$. Therefore, for $x>0$,
\begin{align*}p(x) < \frac{1}{\sigma\sqrt{\pi} \Gamma(\frac{r}{2})}\bigg(\frac{\sigma^2}{2(\theta^2+\sigma^2)}\bigg)^{\frac{r-1}{2}} u(\theta(r-2))v(\theta(r-3)).
\end{align*}
On evaluating $u(\theta(r-2))v(\theta(r-3))$, we obtain the upper bound in (\ref{vgpdfineq}). As it sufficed to consider $x>0$ and $\theta>0$, this completes the proof of inequality (\ref{vgpdfineq}).  

 Finally, the assertion that inequality (\ref{vgpdfineq2}) is less accurate than (\ref{vgpdfineq}) for $r>3$, $\theta\not=0$ follows because $v(x)$ is a strictly decreasing function of $x$ on $(0,\infty)$.  \hfill $\Box$

\vspace{3mm} 

\noindent\emph{Proof of Proposition \ref{disclem}.} To simplify the notation we set $\mu=0$; the general case follows from a simple translation. To further simplify the notation, we shall work with the change of parameters (\ref{parameter}) and set $\alpha=1$, so that $|\beta|<1$, with the general $\alpha>0$ case following from rescaling. With this change of parameters, the solution of the $\mathrm{VG}(r,\theta,\sigma,0)$ Stein equation with test function $h_z(x)=\mathbf{1}(x\leq z)$ is given by
\begin{align}f_z(x) &=-\frac{\mathrm{e}^{-\beta x} K_{\nu}(|x|)}{\sigma^2|x|^{\nu}} \int_0^x \mathrm{e}^{\beta t} |t|^{\nu} I_{\nu}(|t|) [\mathbf{1}(t\leq z)-\mathbb{P}(Z\leq z)] \,\mathrm{d}t \nonumber \\
\label{ksoln}&\quad-\frac{\mathrm{e}^{-\beta x} I_{\nu}(|x|)}{\sigma^2|x|^{\nu}} \int_x^{\infty} \mathrm{e}^{\beta t} |t|^{\nu} K_{\nu}(|t|)[\mathbf{1}(t\leq z)-\mathbb{P}(Z\leq z)]\,\mathrm{d}t.
\end{align}
We now set $z=0$. Differentiating (\ref{ksoln}) using the formulas (\ref{diff11}) and (\ref{diff22}) gives us
\begin{align*}f_0'(x)&=\frac{\mathrm{e}^{-\beta x}}{\sigma^2}\bigg(\beta\frac{K_{\nu}(|x|)}{|x|^\nu}+\frac{K_{\nu+1}(|x|)}{|x|^\nu}\mathrm{sgn}(x)\bigg)\int_0^x\mathrm{e}^{\beta t}|t|^\nu I_\nu(|t|)[\mathbf{1}(t\leq 0)-\mathbb{P}(Z\leq 0)]\,\mathrm{d}t \\
&\quad+\frac{\mathrm{e}^{-\beta x}}{\sigma^2}\bigg(\beta\frac{I_{\nu}(|x|)}{|x|^\nu}-\frac{I_{\nu+1}(|x|)}{|x|^\nu}\mathrm{sgn}(x)\bigg)\int_x^\infty \mathrm{e}^{\beta t}|t|^\nu K_\nu(|t|)[\mathbf{1}(t\leq 0)-\mathbb{P}(Z\leq 0)]\,\mathrm{d}t.
\end{align*}

We have that, for all $\nu>-\frac{1}{2}$ and $-1<\beta<1$,
\begin{align*}\lim_{x\rightarrow0}\bigg[\frac{\mathrm{e}^{-\beta x}K_{\nu}(|x|)}{|x|^\nu}\int_0^x \mathrm{e}^{\beta t}|t|^\nu I_\nu(|t|)[\mathbf{1}(t\leq 0)-\mathbb{P}(Z\leq 0)]\,\mathrm{d}t\bigg]&=0,\\
\lim_{x\rightarrow0}\bigg[\frac{\mathrm{e}^{-\beta x}I_{\nu+1}(|x|)}{|x|^\nu}\int_x^\infty \mathrm{e}^{\beta t}|t|^\nu K_\nu(|t|)[\mathbf{1}(t\leq 0)-\mathbb{P}(Z\leq 0)]\,\mathrm{d}t\bigg]&=0.
\end{align*}
Here the first limit is readily seen to be equal to 0 through an application of the limiting forms (\ref{Itend0}) and (\ref{Ktend0}), whilst the second limit can be seen to be equal to 0 through an application of (\ref{Itend0}) and by identifying $\mathrm{e}^{\beta}|t|^\nu K_\nu(|t|)$ as a constant multiple of the $\mathrm{VG}(r,\theta,\sigma,0)$ density, which means that the integral must be bounded for all $x\in\mathbb{R}$. The term
\begin{equation*}J(x):=\frac{\beta\mathrm{e}^{-\beta x}I_{\nu}(|x|)}{\sigma^2|x|^\nu}\int_x^\infty \mathrm{e}^{\beta t}|t|^\nu K_\nu(|t|)[\mathbf{1}(t\leq 0)-\mathbb{P}(Z\leq 0)]\,\mathrm{d}t
\end{equation*}
is the product of two functions that are continuous at $x=0$, 
\[u(x)=\frac{\beta\mathrm{e}^{-\beta x}I_{\nu}(|x|)}{\sigma^2|x|^\nu}, \quad v(x)=\int_x^\infty \mathrm{e}^{\beta t}|t|^\nu K_\nu(|t|)[\mathbf{1}(t\leq 0)-\mathbb{P}(Z\leq 0)]\,\mathrm{d}t,\]
meaning that $J(0-)=J(0+)$. Therefore
\begin{eqnarray*}f_0'(0+)&=&-\mathbb{P}(Z\leq0)\lim_{x\downarrow0}\bigg[\frac{\mathrm{e}^{-\beta x}K_{\nu+1}(x)}{\sigma^2x^\nu}\int_0^x \mathrm{e}^{\beta t}t^\nu I_\nu(t)\,\mathrm{d}t\bigg]+J(0+), \\
f_0'(0-)&=&-\big(1-\mathbb{P}(Z\leq0)\big)\lim_{x\uparrow0}\bigg[\frac{\mathrm{e}^{-\beta x}K_{\nu+1}(-x)}{\sigma^2(-x)^\nu}\int_0^x\mathrm{e}^{\beta t}(-t)^\nu I_\nu(-t)\,\mathrm{d}t\bigg]+J(0-) \\
&=&\big(1-\mathbb{P}(Z\leq0)\big)\lim_{x\uparrow0}\bigg[\frac{\mathrm{e}^{-\beta x}K_{\nu+1}(-x)}{\sigma^2(-x)^\nu}\int_0^{-x}\mathrm{e}^{-\beta u}u^\nu I_\nu(u)\,\mathrm{d}u\bigg]+J(0+).
\end{eqnarray*}
The above limits can be caluclated using (\ref{Itend0}) and (\ref{Ktend0}), which gives $f_0'(0+)=-\frac{1}{\sigma^2(2\nu+1)}\mathbb{P}(Z\leq0)+J(0+)$ and $f_0'(0-)=\frac{1}{\sigma^2(2\nu+1)}(1-\mathbb{P}(Z\leq0))+J(0+)$,  thus proving the assertion. \hfill $\Box$ 

\vspace{3mm} 

\noindent\emph{Proof of Proposition \ref{ptpt}.} Again, we set $\mu=0$. To simplify the expressions, we shall also work with a rescaling of the solution $g(x):=\sigma^2f(x)$, which will remove a multiplicative constant of $\frac{1}{\sigma^2}$ from the calculations. The analogous approach to the proof of Proposition \ref{disclem} would be to find a Lipschitz test function $h$ for which $g''$ has a discontinuity. This would be quite a tedious undertaking, and instead we choose a highly oscillating test function and perform an asymptotic analysis. Let $h(x)=\frac{\sin(ax)}{a}\in\mathcal{H}_{\mathrm{W}}$.  If a general bound of the form $\|g^{(3)}\|\leq M_{r,\theta,\sigma}\|h'\|$ was available, then we would be able to find a constant $N_{r,\theta,\sigma}>0$, that does not involve $a$, such that $\|g^{(3)}\|\leq N_{r,\theta,\sigma}$. We will show that such a bound is not possible by showing that, with the choice of test function $h(x)=\frac{\sin(ax)}{a}$, the third derivative $g^{(3)}(x)$ blows up if we let $a\gg1$ and choose $x$ such that $ax\ll 1\ll a^2x$.
 This means that it is not possible to obtain such a bound for $\|f^{(3)}\|$, which will prove the proposition.  Before beginning this analysis, it is worth noting that $h''(x)=-a\sin(ax)$ blows up if $a\gg1$ and $x$ is chosen such that $ax\ll 1\ll a^2 x$, which can be seen from the expansion $\sin(t)=t+O(t^3)$, $t\rightarrow0$.  It is therefore still possible that a general bound of the form $\|g^{(3)}\|\leq M_{r,\theta,\sigma,0}\|\tilde{h}\|+M_{r,\theta,\sigma,1}\|h'\|+M_{r,\theta,\sigma,2}\|h''\|$ can be obtained. Indeed, such a bound has been obtained; see Section 3.1.7 of \cite{dgv15}.  

 Let $x>0$.  We first obtain a formula for $g^{(3)}(x)$. We have already obtained a formula for $g'(x)$  (see (\ref{fdash})), and differentiating this formula and then simplifying using the differentiation formulas (\ref{diff11}) and (\ref{diff22}) followed by an application of the Wronskian formula $I_\nu(x)K_{\nu+1}(x)+I_{\nu+1}(x)K_\nu(x)=\frac{1}{x}$ \cite{olver} gives
\begin{align*}g''(x) &= \frac{\tilde{h}(x)}{x} -\bigg[\frac{\mathrm{d}^2}{\mathrm{d}x^2}\bigg(\frac{\mathrm{e}^{-\beta x} K_{\nu}(x)}{x^{\nu}}\bigg)\bigg] \int_0^x\mathrm{e}^{\beta t} t^{\nu} I_{\nu}(t)\tilde{h}(t)\,\mathrm{d}t\\
&\quad -\bigg[\frac{\mathrm{d}^2}{\mathrm{d}x^2}\bigg(\frac{\mathrm{e}^{-\beta x}I_{\nu }(x)}{x^{\nu}}\bigg)\bigg] \int_x^{\infty}\mathrm{e}^{\beta t} t^{\nu} K_{\nu}(t)\tilde{h}(t)\,\mathrm{d}t.
\end{align*}
Differentiating again gives
\begin{align}g^{(3)}(x)&=\frac{h'(x)}{x}-\frac{\tilde{h}(x)}{x^2} -\bigg[\frac{\mathrm{d}^3}{\mathrm{d}x^3}\bigg(\frac{ \mathrm{e}^{-\beta x}K_{\nu}(x)}{x^{\nu}}\bigg)\bigg] \int_0^x \mathrm{e}^{\beta t}t^{\nu} I_{\nu}(t)\tilde{h}(t)\,\mathrm{d}t+R_1\nonumber \\
&\quad+\tilde{h}(x)\bigg\{-x^{\nu}I_{\nu}(x)\frac{\mathrm{d}^2}{\mathrm{d}x^2}\bigg(\frac{ \mathrm{e}^{-\beta x}K_{\nu}(x)}{x^{\nu}}\bigg)+x^{\nu}K_{\nu}(x)\frac{\mathrm{d}^2}{\mathrm{d}x^2}\bigg(\frac{\mathrm{e}^{-\beta x} I_{\nu}(x)}{x^{\nu}}\bigg)\bigg\} \nonumber\\
&=\frac{h'(x)}{x}-\bigg(\frac{2\nu+2}{x^2}+\frac{2\beta}{x}\bigg)\tilde{h}(x) -\bigg[\frac{\mathrm{d}^3}{\mathrm{d}x^3}\bigg(\frac{\mathrm{e}^{-\beta x}K_{\nu}(x)}{x^{\nu}}\bigg)\bigg] \int_0^x \mathrm{e}^{\beta t}t^{\nu} I_{\nu}(t)\tilde{h}(t)\,\mathrm{d}t\nonumber\\
\label{ff33} &\quad+R_1,
\end{align} 
where
\begin{align*}R_1= -\bigg[\frac{\mathrm{d}^3}{\mathrm{d}x^3}\bigg(\frac{\mathrm{e}^{-\beta x}I_{\nu }(x)}{x^{\nu}}\bigg)\bigg] \int_x^{\infty} \mathrm{e}^{\beta t}t^{\nu} K_{\nu}(t)\tilde{h}(t)\,\mathrm{d}t.
\end{align*}
Here, in simplifying to obtain the formula (\ref{ff33}) we used the differentiation formulas (\ref{diffi2e}) and (\ref{diffk2e}) followed by an application of the Wronskian formula. We can bound $R_1$ using inequalities (\ref{difiineq}) and (\ref{rnmt1}) to obtain that, for all $\nu>-\frac{1}{2}$, $-1<\beta<1$ and $x>0$, 
\begin{align*}|R_1|&\leq \|\tilde{h}\|\bigg[\frac{\mathrm{d}^3}{\mathrm{d}x^3}\bigg(\frac{\mathrm{e}^{-\beta x}I_{\nu }(x)}{x^{\nu}}\bigg)\bigg] \int_x^{\infty} \mathrm{e}^{\beta t}t^{\nu} K_{\nu}(t)\,\mathrm{d}t\\
&\leq 8\|\tilde{h}\|\frac{\mathrm{e}^{-\beta x}I_{\nu }(x)}{x^{\nu}} \int_x^{\infty} \mathrm{e}^{\beta t}t^{\nu} K_{\nu}(t)\,\mathrm{d}t\leq 8M_{\nu,\beta}\|\tilde{h}\|,
\end{align*}
where $M_{\nu,\beta}$ is defined in (\ref{mdefn}). We have that $\|\tilde{h}\|\leq 2\|h\|=\frac{2}{a}$, and so the term $R_1$ does not blow up in the limit $a\rightarrow\infty$.



An application of integration by parts to (\ref{ff33}) gives that
\begin{equation*}g^{(3)}(x)=\frac{h'(x)}{x}+\bigg[\frac{\mathrm{d}^3}{\mathrm{d}x^3}\bigg(\frac{\mathrm{e}^{-\beta x}K_{\nu}(x)}{x^{\nu}}\bigg)\bigg]\int_0^xh'(u)\int_0^u \mathrm{e}^{\beta t}t^\nu I_\nu(t)\,\mathrm{d}t\,\mathrm{d}u+R_1+R_2,
\end{equation*}
where
\begin{align*}R_2&=-\tilde{h}(x)\bigg\{\frac{2\nu+2}{x^2}+\frac{2\beta}{x}+ \bigg[\frac{\mathrm{d}^3}{\mathrm{d}x^3}\bigg(\frac{\mathrm{e}^{-\beta x}K_{\nu}(x)}{x^{\nu}}\bigg)\bigg]\int_0^x \mathrm{e}^{\beta t}t^{\nu} I_{\nu}(t)\,\mathrm{d}t\bigg\}\nonumber\\
&=:-\tilde{h}(x)T_{\nu,\beta}(x).
\end{align*}
We will show that, for all $\nu>-\frac{1}{2}$ and $-1<\beta<1$,  there exists a constant $C_{\nu,\beta}>0$, that does not involve $x$, such that $T_{\nu,\beta}(x)\leq C_{\nu,\beta}$ for all $x>0$.  
For this purpose, it will be sufficient to examine the function $T_{\nu,\beta}(x)$ in the limits $x\downarrow0$ and $x\rightarrow\infty$. 
We have that $T_{\nu,\beta}(x)\rightarrow0$ as $x\rightarrow\infty$. This can be shown by using the differentiation formula (\ref{dk3e}) followed by an application of the limiting form (\ref{Ktendinfinity}) and the following limiting form (see \cite{gaunt pams}). For $\nu>-\frac{1}{2}$, $-1<\beta<1$, we have that, as $x\rightarrow\infty$,
\begin{equation}\label{pamsi}\int_0^x \mathrm{e}^{\beta t}t^{\nu} I_{\nu}(t)\,\mathrm{d}t\sim \frac{1}{\sqrt{2\pi}(1+\beta)}x^{\nu-\frac{1}{2}}\mathrm{e}^{(1+\beta)x}.
\end{equation}
In addition, by applying the differentiation formula (\ref{dk3e}) and then the limiting forms (\ref{Itend0}) and (\ref{Ktend0}) together with the expansion $\mathrm{e}^{-\beta x}=1-\beta x+O(x^2)$, as $x\rightarrow0$, we obtain that, for $\nu>-\frac{1}{2}$, $-1<\beta<1$, as $x\downarrow0$,
\begin{align*}&\bigg[\frac{\mathrm{d}^3}{\mathrm{d}x^3}\bigg(\frac{\mathrm{e}^{-\beta x}K_{\nu}(x)}{x^{\nu}}\bigg)\bigg]\int_0^x \mathrm{e}^{\beta t}t^{\nu} I_{\nu}(t)\,\mathrm{d}t \\
&\quad=-\mathrm{e}^{-\beta x}\bigg\{\bigg(3\beta+\beta^3+\frac{2\nu+1}{x}\bigg)\frac{K_{\nu}(x)}{x^\nu}\nonumber\\
&\quad\quad+\bigg(1+3\beta^2+\frac{3\beta(2\nu+1)}{x}+\frac{(2\nu+1)(2\nu+2)}{x^2}\bigg)\frac{K_{\nu+1}(x)}{x^\nu}\bigg\}\int_0^x \mathrm{e}^{\beta t}t^{\nu} I_{\nu}(t)\,\mathrm{d}t \\
&\quad=-\bigg\{(1-\beta x)\bigg(\frac{(2\nu+1)(2\nu+2)}{x^{2}}+\frac{3\beta(2\nu+1)}{x}\bigg) \frac{2^\nu\Gamma(\nu+1)}{x^{2\nu+1}}+O(x^{-2\nu-1})\bigg\}\times\\
&\quad\quad\times\int_0^x \bigg(\frac{t^{2\nu}}{2^\nu\Gamma(\nu+1)}+\frac{\beta t^{2\nu+1}}{2^\nu\Gamma(\nu+1)}+O(t^{2\nu+2})\bigg)\,\mathrm{d}t \\
&\quad=-\bigg(\frac{(2\nu+1)(2\nu+2)}{x^{2\nu+3}}-\frac{\beta(2\nu+1)(2\nu-1)}{x^{2\nu+2}}\bigg)\bigg(\frac{x^{2\nu+1}}{2\nu+1}+\frac{\beta x^{2\nu+2}}{2\nu+2}\bigg)+O(1)\\
&\quad=-\frac{2\nu+2}{x^2}-\frac{2\beta}{x}+O(1).
\end{align*}
One needs to argue carefully that the remainder term in the curly brackets in the second equality is $O(x^{-2\nu-1})$.  For $\nu>0$, this is justified because we have the expansions $K_{\nu}(x)=2^{\nu-1}\Gamma(\nu)x^{-\nu}+O(x^{-\nu+2})$ and $K_{\nu+1}(x)=2^\nu\Gamma(\nu+1)x^{-\nu-1}+O(x^{-\nu+1})$, as $x\downarrow0$ (see (\ref{Ktend0})). However, for $-\frac{1}{2}<\nu\leq0$ the second term in the asymptotic expansion of $K_{\nu+1}(x)$ is larger than $O(x^{-\nu+1})$, as $x\downarrow0$, so we need to work a little harder. For $-\frac{1}{2}<\nu\leq0$, we use the identity (\ref{Kiden}) followed by the limiting form (\ref{Ktend0}) to get that, as $x\downarrow0$,
\begin{align*}(2\nu+1)\frac{K_{\nu}(x)}{x^{\nu+1}}+(2\nu+1)(2\nu+2)\frac{K_{\nu+1}(x)}{x^{\nu+2}}&=(2\nu+1)\frac{K_{\nu+2}(x)}{x^{\nu+1}}\\
&=\frac{(2\nu+1)2^{\nu+1}\Gamma(\nu+2)}{x^{2\nu+3}}+O(x^{-2\nu-1})\\
&=\frac{(2\nu+1)(2\nu+2)2^\nu\Gamma(\nu+1)}{x^{2\nu+3}}+O(x^{-2\nu-1}),
\end{align*}
as required. This argument is of course also valid for $\nu>0$. Thus, we have show that $T_{\nu,\beta}(x)$ is bounded as $x\downarrow0$, as well as in the limit $x\rightarrow\infty$, and so we have been able to shown that $R_2$ does not blow up when $a\rightarrow\infty$.


From the differentiation formula (\ref{dk3e}), we have
\begin{align*}g^{(3)}(x)&=\frac{h'(x)}{x}-\frac{(2\nu+1)(2\nu+2)\mathrm{e}^{-\beta x}K_{\nu+1}(x)}{x^{\nu+2}}\int_0^xh'(u)\int_0^u \mathrm{e}^{\beta t}t^\nu I_\nu(t)\,\mathrm{d}t\,\mathrm{d}u\\
&\quad+R_1+R_2+R_3,
\end{align*} 
where
\begin{align}|R_3|&=\bigg|\mathrm{e}^{-\beta x}\bigg\{\bigg(\beta+\beta^3+\frac{2\nu+1}{x}\bigg)\frac{K_{\nu}(x)}{x^\nu}+\bigg(1+\beta^2+\frac{\beta(2\nu+1)}{x}\bigg)\frac{K_{\nu+1}(x)}{x^\nu}\bigg\}\nonumber\\
&\quad\times\int_0^xh'(u)\int_0^u \mathrm{e}^{\beta t}t^\nu I_\nu(t)\,\mathrm{d}t\,\mathrm{d}u\bigg|\nonumber\\
&\leq\mathrm{e}^{-\beta x}\bigg\{\bigg(\beta+\beta^3+\frac{2\nu+1}{x}\bigg)\frac{K_{\nu}(x)}{x^\nu}+\bigg(1+\beta^2+\frac{\beta(2\nu+1)}{x}\bigg)\frac{K_{\nu+1}(x)}{x^\nu}\bigg\}\nonumber\\
\label{upbn}&\quad\times\int_0^x\int_0^u \mathrm{e}^{\beta t}t^\nu I_\nu(t)\,\mathrm{d}t\,\mathrm{d}u,
\end{align}
and we used that $\|h'\|=1$ in obtaining the inequality. We have that
\begin{equation}\label{pams2}\int_0^x\int_0^u \mathrm{e}^{\beta t}t^\nu I_\nu(t)\,\mathrm{d}t\,\mathrm{d}u\sim\begin{cases} \displaystyle \frac{x^{2\nu+2}}{2^\nu (2\nu+1)(2\nu+2)\Gamma(\nu+1)}, & \quad x \downarrow 0, \\
\displaystyle \frac{1}{\sqrt{2\pi}(1+\beta)^2}x^{\nu-\frac{1}{2}}\mathrm{e}^{(1+\beta)x}, & \quad x \rightarrow\infty. \end{cases}
\end{equation}
Here the limiting form in the case $x\downarrow0$ readily follows from an application of (\ref{Itend0}), whilst the limiting form for $x\rightarrow\infty$ results from an application of the limiting form (\ref{pamsi}) followed by a standard asymptotic analysis of the integral $\int_0^x u^{\nu-\frac{1}{2}}\mathrm{e}^{(1+\beta) u}\,\mathrm{d}u$ in the limit $x\rightarrow\infty$. Using the limiting form (\ref{pams2}) together with the limiting forms (\ref{Ktend0}) and (\ref{Ktendinfinity}) for the modified Bessel function of the second kind proves that the upper bound (\ref{upbn}) does not blow up in either the limits $x\downarrow0$ or $x\rightarrow\infty$, and can thus be uniformly bounded for all $x\geq0$. Therefore, the term $R_3$ does not explode when $a\rightarrow\infty$.

Finally, we analyse $g^{(3)}(x)$ in a neighbourhood of $x=0$ when $a\rightarrow\infty$. This analysis proceeds almost exactly as this stage of the proof of Proposition 3.6 of \cite{gaunt vgii}, but we repeat the details for completeness. We have shown that, for all $x\geq0$, $R_1$, $R_2$ and $R_3$ are $O(1)$ as $a\rightarrow\infty$.  Therefore using the limiting forms (\ref{Itend0}) and (\ref{Ktend0}) gives
\begin{align*}g^{(3)}(x)&=-\frac{\cos(ax)}{x}+\frac{(2\nu+1)(2\nu+2)}{x^{\nu+2}}\cdot\frac{2^\nu \Gamma(\nu+1)}{x^{\nu+1}}\times \\
&\quad \times\int_0^x \cos(au)\int_0^u\frac{t^{2\nu}}{2^\nu\Gamma(\nu+1)}\,\mathrm{d}t\,\mathrm{d}u+O(1) \\
&=-\frac{\cos(ax)}{x}+\frac{2\nu+2}{x^{2\nu+3}}\int_0^x u^{2\nu+1}\cos(au)\,\mathrm{d}u+O(1), \quad x\downarrow 0.
\end{align*}
As well as letting $x\downarrow0$ and $a\rightarrow\infty$, we let $ax\downarrow0$.  Using the expansion $\cos(t)=1-\frac{1}{2}t^2+O(t^4)$ as $t\downarrow0$, we have, in this regime,
\begin{align*}g^{(3)}(x)&=-\frac{1}{x}\bigg(1-\frac{a^2x^2}{2}\bigg)+\frac{2\nu+2}{x^{2\nu+3}}\int_0^x u^{2\nu+1}\bigg(1-\frac{a^2u^2}{2}\bigg)\,\mathrm{d}u+O(1) \\
&=\frac{a^2x}{2}-\frac{(\nu+1)a^2x}{2\nu+4}+O(1) =\frac{a^2x}{2(\nu+2)}+O(1).
\end{align*}
For $x$ chosen such that $ax\ll1\ll a^2x$, we have that $g^{(3)}(x)$ blows up, and this proves the proposition. \hfill $\Box$

\vspace{3mm}

We will need to following lemma for the proof of Proposition \ref{prop1}. 

\begin{lemma}\label{beslem}(i) Let $0<\nu\leq\frac{1}{2}$.  Then $\mathrm{e}^x x^\nu K_\nu(x)$ is a decreasing function of $x$ on $(0,\infty)$ and satisfies the inequality $\mathrm{e}^x x^\nu K_\nu(x)\leq 2^{\nu-1}\Gamma(\nu)$ for all $x>0$.

\vspace{2mm}

\noindent{(ii)} Fix $c\geq2$ and let $x_c^*$ be the unique positive solution to $\mathrm{e}^xK_0(x)=-c\log(x)$.  Then, for $0<x<x_c^*$, we have that $\mathrm{e}^x K_0(x)<-c\log(x)$. 

\vspace{2mm}

\noindent{(iii)} Suppose $0<x<0.629$. Then $\mathrm{e}^x K_0(x)<-3\log(x)$.
\end{lemma}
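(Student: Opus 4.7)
My approach treats the three parts in order, with parts (ii) and (iii) sharing a common analysis.

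\textbf{Part (i).} Differentiating via the standard identity $\frac{\mathrm{d}}{\mathrm{d}x}[x^\nu K_\nu(x)]=-x^\nu K_{\nu-1}(x)$ (listed in Appendix \ref{appendix}) gives
\[\tfrac{\mathrm{d}}{\mathrm{d}x}\bigl[\mathrm{e}^x x^\nu K_\nu(x)\bigr]=\mathrm{e}^x x^\nu\bigl(K_\nu(x)-K_{\nu-1}(x)\bigr).\]
Using the symmetry $K_{\nu-1}=K_{1-\nu}$ together with the fact that, for fixed $x>0$, the map $\alpha\mapsto K_\alpha(x)$ is increasing on $[0,\infty)$, we get $K_{1-\nu}(x)\geq K_\nu(x)$ whenever $\nu\leq 1/2$, so the derivative is nonpositive and the function is (weakly) decreasing. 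The uniform bound then follows from the limiting form $x^\nu K_\nu(x)\to 2^{\nu-1}\Gamma(\nu)$ as $x\downarrow 0$ (see \eqref{Ktend0}), since $\mathrm{e}^x\to 1$.

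\textbf{Part (ii).} Set $F(x):=\mathrm{e}^x K_0(x)+c\log x$. The plan is to show that $F$ is strictly increasing on $(0,\infty)$ with $F(0^+)=-\infty$ and $F(+\infty)=+\infty$, so by the intermediate value theorem $x_c^*$ is the unique zero and $F<0$ on $(0,x_c^*)$. Since $K_0'(x)=-K_1(x)$, we have $F'(x)=(c-G(x))/x$, where
\[G(x):=x\mathrm{e}^x\bigl(K_1(x)-K_0(x)\bigr).\]
As $c\geq 2$, it suffices to prove $G(x)<1$ for all $x>0$. Starting from the integral representation $K_\nu(x)=\int_0^\infty \mathrm{e}^{-x\cosh t}\cosh(\nu t)\,\mathrm{d}t$, I would write $K_1(x)-K_0(x)=\int_0^\infty \mathrm{e}^{-x\cosh t}(\cosh t-1)\,\mathrm{d}t$ and apply the substitution $u=\cosh t-1$, for which $\mathrm{d}t=\mathrm{d}u/\sqrt{u^2+2u}$, obtaining
\[G(x)=x\int_0^\infty \mathrm{e}^{-xu}\sqrt{\tfrac{u}{u+2}}\,\mathrm{d}u<x\int_0^\infty \mathrm{e}^{-xu}\,\mathrm{d}u=1.\]
The boundary behaviour $F(0^+)=-\infty$ and $F(+\infty)=+\infty$ follows from the limiting forms for $K_0$ at $0$ and at $\infty$ together with $c>1$.

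\textbf{Part (iii).} Apply part (ii) with $c=3$. By the strict monotonicity of $F$, the inequality $\mathrm{e}^x K_0(x)<-3\log x$ on $(0,0.629)$ reduces to the single numerical verification that $F(0.629)<0$, i.e.\ $x_3^*>0.629$. Using standard tables (or truncating the convergent series for $K_0$) one checks $\mathrm{e}^{0.629}K_0(0.629)\approx 1.20$ while $-3\log(0.629)\approx 1.39$, confirming $F(0.629)<0$ as required.

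The main obstacle is producing the clean bound $G(x)<1$ in part (ii): naive estimates on $K_1-K_0$ via asymptotics or the recurrence $K_{\nu+1}(x)=K_{\nu-1}(x)+2\nu K_\nu(x)/x$ either fail at one of the endpoints $x\downarrow 0$ or $x\to\infty$, or require delicate case analysis. The change of variables $u=\cosh t-1$ is the key technical step: it converts $G$ into a Laplace transform with the monotone weight $\sqrt{u/(u+2)}<1$, which collapses everything to the elementary integral $x\int_0^\infty\mathrm{e}^{-xu}\,\mathrm{d}u=1$.
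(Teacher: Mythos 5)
Your proof is correct in substance and parts (i) and (ii) follow the same monotonicity skeleton as the paper, but the key technical step in (ii) is genuinely different. The paper shows $-c\log(x)-\mathrm{e}^xK_0(x)$ is decreasing by invoking the differentiation formula (\ref{dkzero}) together with an inequality $K_1(x)<\frac{2}{x}$ imported from Lemma 6.1 of \cite{gaunt vgii}, plus a separate asymptotic check of the signs at $0$ and $\infty$. You instead prove the clean self-contained bound $x\mathrm{e}^x(K_1(x)-K_0(x))<1$ via the integral representation and the substitution $u=\cosh t-1$; this is sharper than what is needed (the paper only requires the bound $<c$ with $c\geq2$), requires no external citation, and in fact covers all $c\geq1+\varepsilon$ rather than just $c\geq2$. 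For part (i), your derivation of $K_{1-\nu}(x)\geq K_{\nu}(x)$ from the parity relation and monotonicity of $K_{\alpha}(x)$ in $\alpha\geq0$ is equivalent to the paper's direct citation of (\ref{Kmoni}).

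One concrete issue in part (iii): your reported value $\mathrm{e}^{0.629}K_0(0.629)\approx1.20$ is wrong. In fact $K_0(0.629)\approx0.741$ and $\mathrm{e}^{0.629}\approx1.876$, so $\mathrm{e}^{0.629}K_0(0.629)\approx1.390$, while $-3\log(0.629)\approx1.3907$; the inequality does hold, but the margin is smaller than $10^{-3}$ (consistent with the paper's root $x_3^*=0.62927\ldots$), so a casual table lookup or loose series truncation is not enough to certify it. The conclusion and the reduction to a single numerical check at $x=0.629$ are fine, but the verification needs to be done to at least four significant figures (the paper delegates this to \emph{Mathematica}); as written, your claimed numerics would not survive scrutiny even though the statement they are meant to support is true.
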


\begin{proof}(i) From (\ref{ddbk}) and (\ref{Kmoni}) we have that $\frac{\mathrm{d}}{\mathrm{d}x}\big(\mathrm{e}^x x^\nu K_\nu(x)\big)=\mathrm{e}^x x^\nu\big(K_\nu(x)- K_{\nu-1}(x)\big)\leq0$, and so $\mathrm{e}^x x^\nu K_\nu(x)$ is a decreasing function of $x$ on $(0,\infty)$.  By (\ref{Ktend0}) we have $\lim_{x\downarrow 0}x^\nu K_\nu(x)=2^{\nu-1}\Gamma(\nu)$, and so we obtain the inequality. 

\vspace{2mm}

\noindent{(ii)} From the differentiation formula (\ref{dkzero}) we have that, for all $x>0$, $\frac{\mathrm{d}}{\mathrm{d}x}\big(-c\log(x)-\mathrm{e}^x K_0(x)\big)=-\frac{c}{x}-\mathrm{e}^x(K_0(x)-K_1(x))<0$, where the inequality follows because $K_1(x)<\frac{2}{x}$ for all $x>0$ (see \cite[Lemma 6.1]{gaunt vgii}).  Thus, $-c\log(x)-\mathrm{e}^x K_0(x)$ is a decreasing function of $x$ on $(0,\infty)$. A simple asymptotic analysis using (\ref{Ktend0}) and (\ref{Ktendinfinity}) shows that $\lim_{x\downarrow0}(-c\log(x)-\mathrm{e}^x K_0(x))>0$ and $\lim_{x\rightarrow\infty}(-c\log(x)-\mathrm{e}^x K_0(x))<0$. The assertion now follows.

\vspace{2mm}

\noindent{(iii)} One can use \emph{Mathematica} to numerically check that $x_3^*=0.62927\ldots>0.629$. 
\end{proof}

\noindent\emph{Proof of Proposition \ref{prop1}.} As usual, we will set $\mu=0$.  In this proof, $Z$ will denote a $\mathrm{VG}(r,\theta,\sigma,0)$ random variable.  For a further simplification, we shall suppose that $\theta\geq0$; the argument for $\theta<0$ is a very similar because $-Z\sim\mathrm{VG}(r,-\theta,\sigma,0)$.


\vspace{2mm}

\noindent{(i)} Let $r>1$.  Proposition 1.2 of \cite{ross} asserts that if the random variable $Y$ has Lebesgue density bounded by $C$, then for any random variable $W$,
\begin{equation*}d_{\mathrm{K}}(W,Y)\leq\sqrt{2Cd_{\mathrm{W}}(W,Y)}.
\end{equation*} 
We know that the $\mathrm{VG}(r,\theta,\sigma,0)$ distribution is unimodal \cite{gm20} and that for $r>1$ the density is bounded.  If $1<r\leq2$ the density is bounded above by $C=\frac{1}{2\sigma\sqrt{\pi}}(1+\theta^2/\sigma^2)^{-\frac{r-1}{2}}\Gamma\big(\frac{r-1}{2}\big)/\Gamma\big(\frac{r}{2}\big)$ (see (\ref{pmutend})), and for $r>2$ we can use Proposition \ref{prop2mode} to bound the density.  This gives us the desired bounds.


\vspace{2mm}

\noindent{(ii)}  We now let $r=1$. We follow the approach used in the proof of Proposition 1.2 of \cite{ross} (see also the proof of Theorem 3.3 of \cite{chen}), but modify part of the argument because the $\mathrm{VG}(1,\theta,\sigma,0)$ density $p(x)$ is unbounded as $x\rightarrow0$. Let $\epsilon>0$ be a constant. Let $h_z(x)=\mathbf{1}(x\leq z)$, and let $h_{z,\epsilon}(x)$ be defined to be one for $x\leq z+2\epsilon$, zero for $x>z$, and linear between.  Then
\begin{align}\mathbb{P}(W\leq z)-\mathbb{P}(Z\leq z)&=\mathbb{E}h_{z}(W)-\mathbb{E}h_{z,\epsilon}(Z)+\mathbb{E}h_{z,\epsilon}(Z)-\mathbb{E}h_z(Z)\nonumber \\
&\leq \mathbb{E}h_{z,\epsilon}(W)-\mathbb{E}h_{z,\epsilon}(Z)+\frac{1}{2}\mathbb{P}(z\leq Z\leq z+2\epsilon)\nonumber \\
&\leq \frac{1}{2\epsilon}d_{\mathrm{W}}(W,Z)+\frac{1}{2}\mathbb{P}(z\leq Z\leq z+2\epsilon)\nonumber \\
\label{puting}&\leq \frac{1}{2\epsilon}d_{\mathrm{W}}(W,Z)+\mathbb{P}(0\leq Z\leq \epsilon),
\end{align}
where the third inequality follows because the $\mathrm{VG}(1,\theta,\sigma,0)$ density is positively skewed about $x=0$ (since $\theta\geq0$), and is a decreasing function of $x$ on $(0,\infty)$ and an increasing function on $(-\infty,0)$.  Suppose $\frac{\epsilon\sqrt{\theta^2+\sigma^2}}{\sigma^2}<0.629$.  Then we can apply part (iii) of Lemma \ref{beslem} to get
\begin{align*}\mathbb{P}(0\leq Z\leq\epsilon)&=\int_0^\epsilon \frac{1}{\pi\sigma}\mathrm{e}^{\theta t/\sigma^2} K_0\bigg(\frac{\sqrt{\theta^2+\sigma^2}}{\sigma^2}t\bigg)\,\mathrm{d}t \leq\frac{1}{\pi\sqrt{\theta^2+\sigma^2}}\int_0^{\frac{\epsilon\sqrt{\theta^2+\sigma^2}}{\sigma^2}}\mathrm{e}^yK_0(y)\,\mathrm{d}y \nonumber\\
&\leq \frac{1}{\pi\sqrt{\theta^2+\sigma^2}}\int_0^{\frac{\epsilon\sqrt{\theta^2+\sigma^2}}{\sigma^2}}-3\log(y)\,\mathrm{d}y =\frac{3\epsilon}{\pi\sigma}\bigg[1+\log\bigg(\frac{\sigma^2}{\epsilon\sqrt{\theta^2+\sigma^2}}\bigg)\bigg].
\end{align*}  
Plugging into (\ref{puting}) gives that, for any $z\in\mathbb{R}$,
\begin{equation*}\mathbb{P}(W\leq z)-\mathbb{P}(Z\leq z)\leq \frac{1}{2\epsilon}d_{\mathrm{W}}(W,Z)+\frac{3\epsilon}{\pi\sigma}\bigg[1+\log\bigg(\frac{\sigma^2}{\epsilon\sqrt{\theta^2+\sigma^2}}\bigg)\bigg].
\end{equation*}
We choose $\epsilon=\sqrt{\pi\sigma d_{\mathrm{W}}(W,Z)/6}$, which, due to the assumption $\frac{\theta^2+\sigma^2}{\sigma^3}d_{\mathrm{W}}(W,Z)<0.755$, guarantees that $\frac{\epsilon\sqrt{\theta^2+\sigma^2}}{\sigma^2}<0.629$.  We therefore obtain the upper bound
\begin{align*}\mathbb{P}(W\leq z)-\mathbb{P}(Z\leq z) &\leq \bigg\{\frac{\sqrt{6}}{2}+\frac{2}{\sqrt{6}}+\frac{1}{\sqrt{6}}\log\bigg(\frac{6\sigma^3}{\pi(\theta^2+\sigma^2)d_{\mathrm{W}}(W,Z)}\bigg)\bigg\}\sqrt{\frac{d_{\mathrm{W}}(W,Z)}{\pi\sigma}} \\
&= \bigg\{5+\log\bigg(\frac{6}{\pi}\bigg)+\log\bigg(\frac{\sigma^3}{(\theta^2+\sigma^2)d_{\mathrm{W}}(W,Z)}\bigg)\bigg\}\sqrt{\frac{d_{\mathrm{W}}(W,Z)}{6\pi\sigma}}.
\end{align*}
A lower bound can be obtained similarly, which is the negative of the upper bound.  This proves inequality (\ref{pronf2}).


\vspace{2mm}

\noindent{(iii)} Let $0<r<1$. In this regime, the $\mathrm{VG}(r,\theta,\sigma,0)$ density is unbounded as $x\rightarrow0$, positively skewed about $x=0$ (since $\theta\geq0$), and is a decreasing function of $x$ on $(0,\infty)$ and an increasing function on $(-\infty,0)$.  We therefore proceed as we did in part (ii) by bounding $\mathbb{P}(0\leq Z\leq \epsilon)$ and then substituting into (\ref{puting}).  Let $\nu=\frac{r-1}{2}$, meaning that $-\frac{1}{2}<\nu<0$.  Then
\begin{align}\mathbb{P}(0\leq Z\leq \epsilon)&=\frac{1}{\sigma\sqrt{\pi}2^\nu(\theta^2+\sigma^2)^{\frac{\nu}{2}}\Gamma(\nu+\frac{1}{2})}\int_0^\epsilon \mathrm{e}^{\theta t/\sigma^2}t^\nu K_\nu\bigg(\frac{\sqrt{\theta^2+\sigma^2}}{\sigma^2}t\bigg)\,\mathrm{d}t\nonumber \\
&\leq\frac{\sigma^{2\nu+1}}{\sigma\sqrt{\pi}2^\nu(\theta^2+\sigma^2)^{\nu+\frac{1}{2}}\Gamma(\nu+\frac{1}{2})}\int_0^{\frac{\epsilon\sqrt{\theta^2+\sigma^2}}{\sigma^2}} \mathrm{e}^y y^{2\nu}\cdot y^{-\nu} K_{-\nu}(y) \,\mathrm{d}y\nonumber \\
&\leq\frac{\sigma^{2\nu+1}}{\sigma\sqrt{\pi}2^\nu(\theta^2+\sigma^2)^{\nu+\frac{1}{2}}\Gamma(\nu+\frac{1}{2})}\int_0^{\frac{\epsilon\sqrt{\theta^2+\sigma^2}}{\sigma^2}}2^{-\nu-1}\Gamma(-\nu) y^{2\nu} \,\mathrm{d}y\nonumber \\
&=\frac{\Gamma(-\nu)}{\sqrt{\pi}2^{2\nu+1}\Gamma(\nu+\frac{1}{2})}\frac{1}{2\nu+1}\bigg(\frac{\epsilon}{\sigma}\bigg)^{2\nu+1}=:C_{\nu,\sigma}\epsilon^{2\nu+1},\nonumber
\end{align}
where we made a change of variables and applied (\ref{parity}) in the second step, and used Lemma \ref{beslem} in the third.  Therefore, for any $z\in\mathbb{R}$,
\begin{equation*}\mathbb{P}(W\leq z)-\mathbb{P}(Z\leq z)\leq \frac{1}{2\epsilon}d_{\mathrm{W}}(W,Z)+C_{\nu,\sigma}\epsilon^{2\nu+1}.
\end{equation*}
We optimise by taking $\epsilon=\big(\frac{d_{\mathrm{W}}(W,Z)}{2(2\nu+1)C_{\nu,\sigma}}\big)^{\frac{1}{2(\nu+1)}}$, which yields the upper bound
\begin{align*}\mathbb{P}(W\leq z)-\mathbb{P}(Z\leq z)&\leq 2\big(2(2\nu+1)C_{\nu,\sigma}\big)^{\frac{1}{2(\nu+1)}}\big(d_{\mathrm{W}}(W,Z)\big)^{\frac{2\nu+1}{2(\nu+1)}}\\
&=2\bigg(\frac{2\Gamma(-\nu)}{\sqrt{\pi}(2\sigma)^{2\nu+1}\Gamma(\nu+\frac{1}{2})}\bigg)^{\frac{1}{2(\nu+1)}}\big(d_{\mathrm{W}}(W,Z)\big)^{\frac{2\nu+1}{2(\nu+1)}}.
\end{align*}
We can similarly obtain a lower bound, which is the negative of the upper bound. By substituting $\nu=\frac{r-1}{2}$ we obtain (\ref{pronf3}), completing the proof.  \hfill $\Box$ 

\appendix

\section{Elementary properties of modified Bessel functions}\label{appendix}

In this appendix, we present some basic properties of modified Bessel functions that are needed in this paper.  All formulas are given in \cite{olver}, except for the inequalities and the differentiation formulas (\ref{2ndkk})--(\ref{dk3e}).

The modified Bessel functions of the first kind $I_\nu(x)$ and second kind $K_\nu(x)$ are defined, for $\nu\in\mathbb{R}$ and $x>0$, by
\[I_{\nu} (x) =  \sum_{k=0}^{\infty} \frac{(\frac{1}{2}x)^{\nu+2k}}{\Gamma(\nu +k+1) k!} \quad \text{and} \quad K_\nu(x)=\int_0^\infty \mathrm{e}^{-x\cosh(t)}\cosh(\nu t)\,\mathrm{d}t.
\]
For $x>0$, the modified Bessel functions $I_\nu(x)$ and $K_\nu(x)$ are strictly positive for $\nu\geq-1$ and all $\nu\in\mathbb{R}$,  respectively.
The modified Bessel function $K_\nu(x)$ satisfies the following identities, which hold for all $\nu\in\mathbb{R}$ and $x\in\mathbb{R}$,
\begin{align} \label{parity}K_{-\nu}(x)&=K_\nu(x), \\
\label{Kiden} K_{\nu+1}(x)&=K_{\nu-1}(x)+\frac{2\nu}{x}K_{\nu}(x).
\end{align}
The modified Bessel functions satisfy the following asymptotics:
\begin{eqnarray}
\label{Itend0}I_{\nu} (x) &=& \frac{1}{\Gamma(\nu +1)} \left(\frac{x}{2}\right)^{\nu}\big(1+O(x^2)\big), \quad x \downarrow 0, \\
\label{Ktend0}K_{\nu} (x) &=& \begin{cases} 2^{|\nu| -1} \Gamma (|\nu|) x^{-|\nu|}\big(1+O(x^{p_\nu})\big), &  x \downarrow 0, \: \nu \not= 0, \\
-\log x+O(1), &  x \downarrow 0, \: \nu = 0, \end{cases} \\
 \label{roots} I_{\nu} (x) &=& \frac{\mathrm{e}^x}{\sqrt{2\pi x}}\big(1+O(x^{-1})\big), \quad x \rightarrow \infty,\nonumber  \\
\label{Ktendinfinity} K_{\nu} (x) &=& \sqrt{\frac{\pi}{2x}} \mathrm{e}^{-x}\big(1+O(x^{-1})\big), \quad x \rightarrow \infty.
\end{eqnarray}
Here, $0<p_\nu\leq2$ for all $\nu\not=0$.  In particular, $p_\nu=2$ for $\nu>1$.
We also have the following differentiation formulas:
\begin{eqnarray}
\label{dkzero}\frac{\mathrm{d}}{\mathrm{d}x}\big(K_0(x)\big)&=&-K_{1}(x), \\
\label{ddbk}\frac{\mathrm{d}}{\mathrm{d}x}\big(x^\nu K_\nu(x)\big)&=&-x^{\nu} K_{\nu-1}(x), \\
\label{diff11}\frac{\mathrm{d}}{\mathrm{d}x}\bigg(\frac{I_\nu(x)}{x^\nu}\bigg)&=&\frac{I_{\nu+1}(x)}{x^\nu}, \\
\label{diff22}\frac{\mathrm{d}}{\mathrm{d}x}\bigg(\frac{K_\nu(x)}{x^\nu}\bigg)&=&-\frac{K_{\nu+1}(x)}{x^\nu}, \\
\label{2ndkk}\frac{\mathrm{d}^2}{\mathrm{d}x^2}\bigg(\frac{I_{\nu}(x)}{x^{\nu}}\bigg)&=&\frac{I_{\nu}(x)}{x^{\nu}}-\frac{(2\nu+1)I_{\nu+1}(x)}{x^{\nu+1}},  \\
\label{2ndii}\frac{\mathrm{d}^2}{\mathrm{d}x^2}\bigg(\frac{K_{\nu}(x)}{x^{\nu}}\bigg)&=&\frac{K_{\nu}(x)}{x^{\nu}}+\frac{(2\nu+1)K_{\nu+1}(x)}{x^{\nu+1}}, \\
\label{dk3}\frac{\mathrm{d}^3}{\mathrm{d}x^3}\bigg(\frac{K_{\nu}(x)}{x^{\nu}}\bigg)&=&-\frac{(2\nu+1)K_{\nu}(x)}{x^{\nu+1}}-\bigg(1+\frac{(2\nu+1)(2\nu+2)}{x^2}\bigg)\frac{K_{\nu+1}(x)}{x^{\nu}},
\end{eqnarray}
where formulas (\ref{2ndkk})--(\ref{dk3}) are obtained from short calculations that involve differentiating using the formulas (\ref{diff11}) and (\ref{diff22}) followed by an application of the identity (\ref{Kiden}) to simplify the expressions. Using the Leibniz differentiation formula together with formulas (\ref{diff11})--(\ref{dk3}) gives
\begin{align}\label{diffi2e}\frac{\mathrm{d}^2}{\mathrm{d}x^2}\bigg(\frac{\mathrm{e}^{-\beta x}I_{\nu}(x)}{x^{\nu}}\bigg)&=\mathrm{e}^{-\beta x}\bigg\{(1+\beta^2)\frac{I_{\nu}(x)}{x^\nu}-\bigg(2\beta+\frac{2\nu+1}{x}\bigg)\frac{I_{\nu+1}(x)}{x^\nu}\bigg\},\\
\label{diffk2e}\frac{\mathrm{d}^2}{\mathrm{d}x^2}\bigg(\frac{\mathrm{e}^{-\beta x}K_{\nu}(x)}{x^{\nu}}\bigg)&=\mathrm{e}^{-\beta x}\bigg\{(1+\beta^2)\frac{K_{\nu}(x)}{x^\nu}+\bigg(2\beta+\frac{2\nu+1}{x}\bigg)\frac{K_{\nu+1}(x)}{x^\nu}\bigg\},\\
\frac{\mathrm{d}^3}{\mathrm{d}x^3}\bigg(\frac{\mathrm{e}^{-\beta x}K_{\nu}(x)}{x^{\nu}}\bigg)&=-\mathrm{e}^{-\beta x}\bigg\{\bigg(3\beta+\beta^3+\frac{2\nu+1}{x}\bigg)\frac{K_{\nu}(x)}{x^\nu}\nonumber\\
\label{dk3e}&\quad+\bigg(1+3\beta^2+\frac{3\beta(2\nu+1)}{x}+\frac{(2\nu+1)(2\nu+2)}{x^2}\bigg)\frac{K_{\nu+1}(x)}{x^\nu}\bigg\}.
\end{align}

For $x > 0$, the following inequalities hold:
\begin{align}\label{Imon}I_{\nu} (x) &< I_{\nu-1} (x), \quad \nu \geq \tfrac{1}{2}, \\
\label{Kmoni}K_{\nu} (x) &\leq K_{\nu - 1} (x), \quad \nu \leq \tfrac{1}{2},\\
\label{cake}K_{\nu} (x) &\geq K_{\nu - 1} (x), \quad \nu \geq \tfrac{1}{2}.
\end{align}
We have equality in (\ref{Kmoni}) and (\ref{cake}) if and only if $\nu=\frac{1}{2}$. These two inequalities can be found in \cite{ifantis}. Inequality (\ref{Imon}) is given in \cite{jones} and \cite{nasell}, extending a result of \cite{soni}. Also, we have the following inequality for products of modified Bessel functions, which is given in Corollary 1 of \cite{gaunt ineq2} and is a simple consequence of a monotonicity result of  \cite{penfold}  concerning  the product $K_{\nu}(x)I_\nu(x)$.
For $x\geq0$,
\begin{equation}\label{bdsjbc1} K_{\nu}(x)I_\nu(x)\leq\frac{1}{2\nu}, \quad \nu>0.
\end{equation} 
Inequality (D.4) of \cite{gaunt thesis} states that, for $x>0$,
\begin{equation}\label{difiineq}\frac{\mathrm{d}^3}{\mathrm{d}x^3}\bigg(\frac{\mathrm{e}^{-\beta x}I_{\nu}(x)}{x^{\nu}}\bigg)< \frac{8\mathrm{e}^{-\beta x}I_{\nu}(x)}{x^\nu}, \quad\nu>-\tfrac{1}{2},\;-1<\beta<1.
\end{equation}
We also have the following integral inequality, which is a special case of inequality (2.6) of \cite{gaunt ineq1}. For $x\geq0$,
\begin{align}\label{gau11}\int_0^x t^\nu I_\nu(t)\,\mathrm{d}t&\leq\frac{2(\nu+1)}{2\nu+1}x^\nu I_{\nu+1}(x), \quad \nu>-\tfrac{1}{2}.
\end{align}


\section{Uniform bounds for expressions involving integrals of modified Bessel functions}\label{appb}

In this appendix, we present bounds of \cite{gaunt ineq2, gaunt ineq3,gaunt ineq2020} that we will use to bound the solution of the VG Stein equation. The bounds of \cite{gaunt ineq2, gaunt ineq3,gaunt ineq2020} are stated for the case $\alpha=1$, $-1<\beta<1$; the bounds we state in this appendix follow from a simple change of variables.  
Let $\nu$, $\alpha$ and $\beta$ be such that $\nu>-\frac{1}{2}$ and $|\beta|<\alpha$. Also, let $\gamma=\beta/\alpha$.  We will translate the bounds of \cite{gaunt ineq2, gaunt ineq3,gaunt ineq2020} into the $\mathrm{VG}(r,\theta,\sigma,\mu)$ parametrisation using the change of parameters
\begin{equation*}  \nu=\frac{r-1}{2}, \quad \alpha =\frac{\sqrt{\theta^2 +  \sigma^2}}{\sigma^2}, \quad \beta =\frac{\theta}{\sigma^2}.
\end{equation*}

We first give the following bound, which is not available in the literature, but is easy to derive.  Suppose $\nu>-\frac{1}{2}$ and $0\leq\beta<\alpha$.  Then, for $x\geq0$,
\begin{align}\frac{\mathrm{e}^{-\beta x}K_{\nu+1}(\alpha x)}{x^\nu}\int_0^x \mathrm{e}^{\beta t}t^\nu I_\nu(\alpha t)\,\mathrm{d}t&\leq \frac{K_{\nu+1}(\alpha x)}{x^\nu}\int_0^x t^\nu I_\nu(\alpha t)\,\mathrm{d}t\nonumber\\
\label{gau22}&\leq \frac{2(\nu+1)}{\alpha(2\nu+1)} K_{\nu+1}(\alpha x)I_{\nu+1}(\alpha x)\leq\frac{1}{\alpha(2\nu+1)},
\end{align}
where in the first step we used that $\mathrm{e}^{\beta t}$ is an increasing function of $t$; in the second we used inequality (\ref{gau11}); and in the third we used inequality (\ref{bdsjbc1}).


Suppose now that $\nu>-\frac{1}{2}$ and $|\beta|<\alpha$.  Then, the bounds of \cite{gaunt ineq2, gaunt ineq3,gaunt ineq2020} that we will need are the following. For all $x\geq 0$,
\begin{align}\frac{\mathrm{e}^{-\beta x}K_{\nu}(\alpha x)}{x^\nu}\int_0^x \mathrm{e}^{\beta t}t^{\nu+1}I_\nu(\alpha t)\,\mathrm{d}t&< \frac{1}{2\alpha^2(1-|\gamma|)} \nonumber \\
\label{propb2a12}&=\frac{\sigma^2}{2}\bigg(1+\frac{|\theta|}{\sqrt{\theta^2+\sigma^2}}\bigg)<\sigma^2,\\
\frac{\mathrm{e}^{-\beta x}I_{\nu}(\alpha x)}{x^\nu}\int_x^\infty \mathrm{e}^{\beta t}t^{\nu+1}K_{\nu}(\alpha t)\,\mathrm{d}t&< \frac{1}{\alpha^2}\bigg(1+\frac{2\sqrt{\pi}|\gamma|\Gamma(\nu+\frac{3}{2})}{(1-\gamma^2)^{\nu+\frac{3}{2}}\Gamma(\nu+1)}\bigg)\nonumber\\
&\!\!\!\!\!\!\!\!\!\!\!=\frac{\sigma^4}{\theta^2+\sigma^2}+2\sqrt{\pi}|\theta|\sigma\frac{\Gamma\big(\frac{r}{2}+1\big)}{\Gamma\big(\frac{r+1}{2}\big)}\bigg(1+\frac{\theta^2}{\sigma^2}\bigg)^{\frac{r-1}{2}}\nonumber \\
\label{fff1}&\!\!\!\!\!\!\!\!\!\!\!<\frac{\sigma^4}{\theta^2+\sigma^2}+\sqrt{2\pi}|\theta|\sigma\sqrt{r+1}\bigg(1+\frac{\theta^2}{\sigma^2}\bigg)^{\frac{r-1}{2}}, \\
\label{2020wk1}\frac{\mathrm{e}^{-\beta x}K_{\nu+1}(\alpha x)}{x^\nu}\int_0^x \mathrm{e}^{\beta t}t^{\nu+1}I_\nu(\alpha t)\,\mathrm{d}t&<\frac{1}{2\alpha^2(1-|\gamma|)}<\sigma^2, \\
\label{rnmt2}\frac{\mathrm{e}^{-\beta x}K_{\nu+1}(\alpha x)}{x^\nu}\int_0^x \mathrm{e}^{\beta t}t^\nu I_\nu(\alpha t)\,\mathrm{d}t&\leq \frac{2}{\alpha(2\nu+1)}=\frac{2\sigma^2}{r\sqrt{\theta^2+\sigma^2}},  \\
\label{rnmt2b}\frac{\mathrm{e}^{-\beta x}K_\nu(\alpha x)}{x^\nu}\int_0^x \mathrm{e}^{\beta t}t^\nu I_\nu(\alpha t)\,\mathrm{d}t&\leq \frac{2}{\alpha(2\nu+1)}=\frac{2\sigma^2}{r\sqrt{\theta^2+\sigma^2}},  \\
\label{rnmt1}\frac{\mathrm{e}^{-\beta x}I_\nu(\alpha x)}{x^\nu}\int_x^\infty \mathrm{e}^{\beta t}t^\nu K_\nu(\alpha t)\,\mathrm{d}t&\leq \frac{M_{\nu,\gamma}}{\alpha}< \frac{\sigma^2A_{r,\theta,\sigma}}{\sqrt{\theta^2+\sigma^2}}, \\
\label{jjj1b0}\frac{\mathrm{e}^{-\beta x}K_{\nu}(\alpha x)}{x^{\nu-1}}\int_0^x \mathrm{e}^{\beta t}t^{\nu}I_\nu(\alpha t)\,\mathrm{d}t&< \frac{2\nu+7}{2\alpha^2(2\nu+1)(1-|\gamma|)}<\sigma^2\bigg(1+\frac{6}{r}\bigg),\\
\label{jjj1b}\frac{\mathrm{e}^{-\beta x}K_{\nu+1}(\alpha x)}{x^{\nu-1}}\int_0^x \mathrm{e}^{\beta t}t^{\nu}I_\nu(\alpha t)\,\mathrm{d}t&< \frac{2\nu+7}{2\alpha^2(2\nu+1)(1-|\gamma|)}<\sigma^2\bigg(1+\frac{6}{r}\bigg),\\
\label{ddd2}\frac{\mathrm{e}^{-\beta x}I_{\nu}(\alpha x)}{x^{\nu-1}}\int_x^\infty \mathrm{e}^{\beta t}t^{\nu}K_{\nu}(\alpha t)\,\mathrm{d}t&< \frac{N_{\nu,\gamma}}{\alpha^2}< \sigma^2 B_{r,\theta,\sigma},   
\end{align}
where
\begin{align}\label{mdefn}M_{\nu,\gamma}=\begin{cases} \displaystyle \frac{\sqrt{\pi}\Gamma\big(\nu+\frac{1}{2}\big)}{(1-\gamma^2)^{\nu+\frac{1}{2}}\Gamma(\nu+1)}, & \:  \nu\geq\frac{1}{2}, \\
\displaystyle \frac{6\Gamma(\nu+\frac{1}{2})}{1-|\gamma|}, & \:   |\nu|<\frac{1}{2}, \end{cases}
\end{align}
and
\begin{equation*}N_{\nu,\gamma}= \begin{cases} 
\displaystyle\frac{\sqrt{\pi}\Gamma(\nu+\frac{1}{2})}{(1-\gamma^2)^{\nu+\frac{1}{2}}\Gamma(\nu)}, & \quad \nu\geq\tfrac{1}{2},\\
\displaystyle \frac{1}{1-|\gamma|}, & \quad |\nu|<\tfrac{1}{2}, \end{cases}
\end{equation*}
and in the $\mathrm{VG}(r,\theta,\sigma,\mu)$ parametrisation
\begin{equation*}\label{breqn1appb}
A_{r,\theta,\sigma}=\begin{cases} \displaystyle \frac{2\sqrt{\pi}}{\sqrt{2r-1}}\bigg(1+\frac{\theta^2}{\sigma^2}\bigg)^{\frac{r}{2}}, & \:  r\geq2, \\
\displaystyle 12\Gamma\Big(\frac{r}{2}\Big)\bigg(1+\frac{\theta^2}{\sigma^2}\bigg), & \:   0<r<2, \end{cases}
\end{equation*}
and
\begin{equation*}\label{breqn2appb}B_{r,\theta,\sigma}=\begin{cases} \displaystyle \sqrt{\frac{\pi(r-1)}{2}}\bigg(1+\frac{\theta^2}{\sigma^2}\bigg)^{\frac{r}{2}-1}, & \:  r\geq2, \\
\displaystyle 2, & \:   0<r<2, \end{cases}
\end{equation*}
which satisfy the inequalities
\begin{equation}\label{ambamb}M_{\nu,\gamma}<A_{r,\theta,\sigma}, \quad N_{\nu,\gamma}<\alpha^2\sigma^2B_{r,\theta,\sigma}.
\end{equation}

Written in the $\mathrm{VG}(r,\theta,\sigma,\mu)$ parametrisation, $M_{\nu,\gamma}$ reads 
\begin{align*}M_{\nu,\gamma}&=\begin{cases} \displaystyle \frac{\sqrt{\pi}\Gamma\big(\frac{r}{2}\big)}{\Gamma\big(\frac{r+1}{2}\big)}\bigg(1+\frac{\theta^2}{\sigma^2}\bigg)^{\frac{r}{2}}, & \:  r\geq2, \\
\displaystyle 6\Gamma\Big(\frac{r}{2}\Big)\frac{\sqrt{\theta^2+\sigma^2}}{\sqrt{\theta^2+\sigma^2}-|\theta|}, & \:   0<r<2. \end{cases} 
\end{align*}
The inequality $M_{\nu,\gamma}<A_{r,\theta,\sigma}$ then follows from an application of the inequality 
\begin{align*}\frac{\sqrt{\theta^2+\sigma^2}}{\sqrt{\theta^2+\sigma^2}-|\theta|}=\frac{\sqrt{\theta^2+\sigma^2}(\sqrt{\theta^2+\sigma^2}+|\theta|)}{\sigma^2}<\frac{2(\theta^2+\sigma^2)}{\sigma^2}=2\bigg(1+\frac{\theta^2}{\sigma^2}\bigg)
\end{align*}
and the upper bound in the two-sided inequality
\begin{equation}\label{oct8}\sqrt{\frac{2}{r}}<\frac{\Gamma(\frac{r}{2})}{\Gamma(\frac{r+1}{2})}<\sqrt{\frac{2}{r-\frac{1}{2}}}, \quad r>1.
\end{equation}
The double inequality (\ref{oct8}) is obtained by comining the inequalities $\frac{\Gamma(x+\frac{1}{2})}{\Gamma(x+1)}>(x+\frac{1}{2})^{-\frac{1}{2}}$ for $x>0$ \cite{gaut}, and $\frac{\Gamma(x+\frac{1}{2})}{\Gamma(x+1)}<(x+\frac{1}{4})^{-\frac{1}{2}}$ for $x>-\frac{1}{4}$ \cite{elezovic}. The inequality $N_{\nu,\gamma}<\alpha^2\sigma^2B_{r,\theta,\sigma}$ can be seen to hold similarly, although this time the lower bound in (\ref{oct8}) is used to bound the ratio of gamma functions, and we use that $\alpha^2\sigma^2=1+\frac{\theta^2}{\sigma^2}$.

In obtaining inequality (\ref{propb2a12}) we calculated
\begin{align*}\frac{1}{\alpha^2(1-|\gamma|)}&=\frac{\sigma^4}{\theta^2+\sigma^2}\frac{\sqrt{\theta^2+\sigma^2}}{\sqrt{\theta^2+\sigma^2}-|\theta|}=\frac{\sigma^4}{\theta^2+\sigma^2}\frac{\sqrt{\theta^2+\sigma^2}(\sqrt{\theta^2+\sigma^2}+|\theta|)}{\sigma^2}\\
&=\sigma^2\bigg(1+\frac{|\theta|}{\sqrt{\theta^2+\sigma^2}}\bigg),
\end{align*}
and in obtaining inequality (\ref{fff1}) we calculated
\begin{align*}\frac{1}{\alpha^2}\frac{|\gamma|\Gamma(\nu+\frac{3}{2})}{(1-\gamma^2)^{\nu+\frac{3}{2}}\Gamma(\nu+1)}&=\frac{\sigma^4}{\theta^2+\sigma^2}\frac{|\theta|}{\sqrt{\theta^2+\sigma^2}}\bigg(1-\frac{\theta^2}{\theta^2+\sigma^2}\bigg)^{-\frac{r}{2}-1}\frac{\Gamma\big(\frac{r}{2}+1\big)}{\Gamma\big(\frac{r+1}{2}\big)}\\
&=\sigma^2\bigg(1+\frac{\theta^2}{\sigma^2}\bigg)^{-1}|\theta|\frac{1}{\sigma}\bigg(1+\frac{\theta^2}{\sigma^2}\bigg)^{-\frac{1}{2}}\bigg(1+\frac{\theta^2}{\sigma^2}\bigg)^{\frac{r}{2}+1}\frac{\Gamma\big(\frac{r}{2}+1\big)}{\Gamma\big(\frac{r+1}{2}\big)}\\
&=|\theta|\sigma\frac{\Gamma\big(\frac{r}{2}+1\big)}{\Gamma\big(\frac{r+1}{2}\big)}\bigg(1+\frac{\theta^2}{\sigma^2}\bigg)^{\frac{r-1}{2}}.
\end{align*}
The final inequality then follows from bounding the ratio of gamma functions using the lower bound in (\ref{oct8}).  All other conversions from the parameters $\nu,\alpha,\beta$ to $r,\theta,\sigma$ are simple and we provide no further details.

In order to obtain our bounds for the solution of the VG Stein equation, we also need some additional bounds that are an easy consequence of some of the inequalities (\ref{propb2a12})--(\ref{ddd2}). To this end, we note two simple inequalities that follow from using the differentiation formulas (\ref{diff11}) and (\ref{diff22}) and the inequalities (\ref{Imon}) and (\ref{cake}), followed by an application of our assumption $|\beta|<\alpha$ to simplify the bound. For $\nu>-\frac{1}{2}$, $|\beta|<\alpha$ and $x>0$,
\begin{align}\label{alph8}\bigg|\frac{\mathrm{d}}{\mathrm{d}x}\bigg(\frac{\mathrm{e}^{-\beta x}I_{\nu}(\alpha x)}{x^{\nu}}\bigg)\bigg|=\frac{\mathrm{e}^{-\beta x}}{x^\nu}\big|\alpha I_{\nu+1}(x)-\beta I_{\nu}(x)\big|&<2\alpha\frac{\mathrm{e}^{-\beta x}I_{\nu}(\alpha x)}{x^{\nu}}, \\
\label{alph9}\bigg|\frac{\mathrm{d}}{\mathrm{d}x}\bigg(\frac{\mathrm{e}^{-\beta x}K_{\nu}(\alpha x)}{x^{\nu}}\bigg)\bigg|=\frac{\mathrm{e}^{-\beta x}}{x^\nu}\big|\alpha K_{\nu+1}(x)+\beta K_{\nu}(x)\big|&<2\alpha\frac{\mathrm{e}^{-\beta x}K_{\nu+1}(\alpha x)}{x^{\nu}}.
\end{align}
If we restrict to $-\alpha<\beta\leq0$, then we can improve (\ref{alph9}) to
\begin{equation}
\label{alph90}\bigg|\frac{\mathrm{d}}{\mathrm{d}x}\bigg(\frac{\mathrm{e}^{-\beta x}K_{\nu}(\alpha x)}{x^{\nu}}\bigg)\bigg|\leq\alpha\frac{\mathrm{e}^{-\beta x}K_{\nu+1}(\alpha x)}{x^{\nu}}.
\end{equation}
Combining inequalities (\ref{alph8}) and (\ref{alph9}) with certain bounds from the list (\ref{propb2a12})--(\ref{ddd2}) then yields the following uniform bounds (\ref{jjj1z0})--(\ref{ddd2z}).  Suppose $\nu>-\frac{1}{2}$ and $|\beta|<\alpha$. Then, for any $x\geq0$,
\begin{align}\bigg|\frac{\mathrm{d}}{\mathrm{d}x}\bigg(\frac{\mathrm{e}^{-\beta x}K_{\nu}(\alpha x)}{x^{\nu}}\bigg)\bigg|\int_0^x \mathrm{e}^{\beta t}t^{\nu+1}I_\nu(\alpha t)\,\mathrm{d}t&<\frac{1}{\alpha(1-|\gamma|)}\nonumber \\
\label{jjj1z0}&=\sqrt{\theta^2+\sigma^2}+|\theta| <2\sqrt{\theta^2+\sigma^2}, \\
\bigg|\frac{\mathrm{d}}{\mathrm{d}x}\bigg(\frac{\mathrm{e}^{-\beta x}I_{\nu}(\alpha x)}{x^{\nu}}\bigg)\bigg|\int_x^\infty \mathrm{e}^{\beta t}t^{\nu+1}K_{\nu}(\alpha t)\,\mathrm{d}t&<\frac{2}{\alpha}\bigg(1+\frac{2\sqrt{\pi}|\gamma|\Gamma(\nu+\frac{3}{2})}{(1-\gamma^2)^{\nu+\frac{3}{2}}\Gamma(\nu+1)}\bigg)\nonumber \\
&\!\!\!\!\!\!\!\!\!\!\!\!\!\!\!\!\!\!\!\!\!\!\!\!\!\!\!\!\!\!\!\!\!\!\!\!<2\frac{\sqrt{\theta^2+\sigma^2}}{\sigma^2}\Bigg\{\frac{\sigma^4}{\theta^2+\sigma^2}+\sqrt{2\pi}|\theta|\sigma\sqrt{r+1}\bigg(1+\frac{\theta^2}{\sigma^2}\bigg)^{\frac{r-1}{2}}\Bigg\}\nonumber \\
\label{ddd2z0}&\!\!\!\!\!\!\!\!\!\!\!\!\!\!\!\!\!\!\!\!\!\!\!\!\!\!\!\!\!\!\!\!\!\!\!\!=\frac{2\sigma^2}{\sqrt{\theta^2+\sigma^2}}+2\sqrt{2\pi}|\theta|\sqrt{r+1}\bigg(1+\frac{\theta^2}{\sigma^2}\bigg)^{\frac{r}{2}}, \\
\label{ddd2z00}\bigg|\frac{\mathrm{d}}{\mathrm{d}x}\bigg(\frac{\mathrm{e}^{-\beta x}I_{\nu}(\alpha x)}{x^{\nu}}\bigg)\bigg|\int_x^\infty \mathrm{e}^{\beta t}t^{\nu}K_{\nu}(\alpha t)\,\mathrm{d}t&<2M_{\gamma,\nu}<2A_{r,\theta,\sigma}, 
\end{align}
\begin{align}
x\bigg|\frac{\mathrm{d}}{\mathrm{d}x}\bigg(\frac{\mathrm{e}^{-\beta x}K_{\nu}(\alpha x)}{x^{\nu}}\bigg)\bigg|\int_0^x \mathrm{e}^{\beta t}t^{\nu}I_\nu(\alpha t)\,\mathrm{d}t&< \frac{2\nu+7}{\alpha(2\nu+1)(1-|\gamma|)} \nonumber \\
\label{jjj1z}&<2\bigg(1+\frac{6}{r}\bigg)\sqrt{\theta^2+\sigma^2}, \\
\label{ddd2z}x\bigg|\frac{\mathrm{d}}{\mathrm{d}x}\bigg(\frac{\mathrm{e}^{-\beta x}I_{\nu}(\alpha x)}{x^{\nu}}\bigg)\bigg|\int_x^\infty \mathrm{e}^{\beta t}t^{\nu}K_{\nu}(\alpha t)\,\mathrm{d}t&< \frac{2N_{\nu,\gamma}}{\alpha}< 2\sqrt{\theta^2+\sigma^2}B_{r,\theta,\sigma}.
\end{align}
We also have the bound
\begin{equation}\label{jjj1z00}\bigg|\frac{\mathrm{d}}{\mathrm{d}x}\bigg(\frac{\mathrm{e}^{-\beta x}K_{\nu}(\alpha x)}{x^{\nu}}\bigg)\bigg|\int_0^x \mathrm{e}^{\beta t}t^{\nu}I_\nu(\alpha t)\,\mathrm{d}t\leq\frac{2}{2\nu+1}=\frac{2}{r}.
\end{equation}
For $-\alpha<\beta\leq0$, this bound follows from combining inequalities (\ref{rnmt2}) and (\ref{alph90}). For $0\leq\beta<\alpha$, we combine inequalities (\ref{gau22}) and (\ref{alph9}).

\subsection*{Acknowledgements}
The author is supported by a Dame Kathleen Ollerenshaw Research Fellowship. I would like to thank Ehsan Azmoodeh for a helpful discussion. I would like to thank the reviewers for their helpful comments and suggestions.

\footnotesize


\begin{thebibliography}{99}
\addcontentsline{toc}{section}{References}

\bibitem{aaps17} Arras, B., Azmoodeh, E., Poly, G. and Swan, Y. A bound on the Wasserstein-2 distance between linear combinations of independent random variables. \emph{Stoch. Proc. Appl.} $\mathbf{129}$ (2019), pp. 2341--2375.

\bibitem{arrasetal} Arras, B., Mijoule, G., Poly, G. and Swan, Y.  A new approach to the Stein-Tikhomirov method: with applications to the second Wiener chaos and Dickman convergence. 
arXiv:1605.06819v2, 2017.

\bibitem{aek20} Azmoodeh, E., Eichelsbacher, P. and Knichel, L. Optimal Gamma Approximation on Wiener Space. \emph{ALEA, Lat. Am. J. Probab. Math. Stat.} $\mathbf{17}$ (2020), pp. 101--132.

\bibitem{aet21} Azmoodeh, E., Eichelsbacher, P. and Th\"ale, C. Optimal Variance-Gamma Approximation on Wiener Space. \emph{J. Funct. Anal.}  $\mathbf{282}$  (2022), Art.\ 109450.


\bibitem{ag18} Azmoodeh, E. and Gasbarra, D. On a new Sheffer class of polynomials related to normal product distribution. \emph{Theor. Probab. Math. Statist.} $\mathbf{98}$ (2019), pp. 51--71. 

\bibitem{azmoodeh} Azmoodeh, E., Peccati, G. and Poly, G.  Convergence towards linear combinations of chi-squared random variables: a Malliavin-based approach.  \emph{S\'{e}minaire de Probabilit\'{e}s} XLVII (special volume in memory of Marc Yor) (2015), pp. 339--367.

\bibitem{bt17} Bai, S. and Taqqu, M. S.  Behavior of the generalized Rosenblatt process at extreme critical exponent values. \emph{Ann. Probab.} $\mathbf{45}$ (2017), pp. 1278--1324.


\bibitem{bgx15} Barbour, A. D., Gan, H. L. and Xia, A. Stein factors for negative binomial approximation in Wasserstein distance. \emph{Bernoulli} $\mathbf{21}$ (2015), pp. 1002--1013.

\bibitem{bu98} Barbour, A. D. and Utev, S. Solving the Stein Equation in compound Poisson approximation. \emph{Adv. Appl. Probab.} $\mathbf{30}$ (1998), pp. 449--475.

\bibitem{bx06} Barbour, A. D. and Xia, A. On Stein's factors for Poisson approximation in Wasserstein distance. \emph{Bernoulli} $\mathbf{12}$ (2006), pp. 943--954.





\bibitem{chatterjee} Chatterjee, S., Fulman, J. and  R\"ollin, A. Exponential approximation by Stein's method and spectral graph theory. \emph{ALEA Lat. Am. J. Probab. Math. Stat.} $\mathbf{8}$ (2011), pp. 197--223.

\bibitem{chen75} Chen, L. H. Y. Poisson approximation for dependent trials. \emph{Ann. Probab.} $\mathbf{3}$ (1975), pp. 534--545.

\bibitem{chen} Chen, L. H. Y., Goldstein, L. and Shao, Q.--M.  \emph{Normal Approximation by Stein's Method.} Springer, 2011.


\bibitem{cd19} Cloez, B. and Delplancke, C. Intertwinings and Stein's magic factors for birth-death processes. \emph{Ann. I. H. Poincare-Pr.} $\mathbf{55}$ (2019), pp. 341--377.


\bibitem{daly08} Daly, F. Upper bounds for Stein-type operators. \emph{Electron. J. Probab.} $\mathbf{13}$ (2008), pp. 556--587.

\bibitem{daly17} Daly, F. On magic factors in Stein's method for compound Poisson approximation. \emph{Electron. Commun. Probab.} $\mathbf{22}$ no. 67 (2017), pp. 1--10. 



\bibitem{dgv15} D\"{o}bler, C, Gaunt, R. E. and Vollmer, S. J.  An iterative technique for bounding derivatives of solutions of Stein equations.  \emph{Electron. J. Probab.} $\mathbf{22}$ no. 96 (2017), pp. 1--39.

\bibitem{dp18} D\"obler, C. and Peccati, P.  The Gamma Stein equation and noncentral de Jong theorems. \emph{Bernoulli} $\mathbf{24}$ (2018), pp. 3384--3421.

\bibitem{eberlein} Eberlein, E. and Hammerstein E.  Generalized Hyperbolic and Inverse Gaussian Distributions: Limiting Cases and Approximation of Processes.  In:  Dalang, R. C., Dozzi, M., Russo, F. (Eds.), Seminar on Stochastic Analysis, Random Fields and Applications IV, in: \emph{Progress in Probability} $\mathbf{58}$ Birkh\"{a}user Verlag, (2004), pp. 105--153.

\bibitem{eden2} Eden, R. and Viquez, J. Nourdin-Peccati analysis on Wiener and Wiener-Poisson space for general distributions. \emph{Stoch. Proc. Appl.} $\mathbf{125}$ (2015), pp. 182--216.

\bibitem{eichelsbacher} Eichelsbacher, P. and Th\"{a}le, C.  Malliavin-Stein method for Variance-Gamma approximation on Wiener space.  	\emph{Electron. J. Probab.} $\mathbf{20}$ no. 123 (2015), pp. 1--28.

\bibitem{elezovic} Elezovi\'c, N., Giordano, C. and Pe\v{c}ari\'c, J.  The best bounds in Gautschi's inequality. \emph{Math. Inequal. Appl.} $\mathbf{3}$ (2000), pp. 239--252.


\bibitem{gaunt thesis} Gaunt, R. E.  \emph{Rates of Convergence of Variance-Gamma Approximations via Stein's Method.}  DPhil thesis, University of Oxford, 2013.

\bibitem{gaunt vg} Gaunt, R. E.  Variance-Gamma approximation via Stein's method.  \emph{Electron. J. Probab.} $\mathbf{19}$ no. 38 (2014), pp. 1--33.

\bibitem{gaunt ineq1} Gaunt, R. E.  Inequalities for modified Bessel functions and their integrals.  \emph{J. Math. Anal. Appl.} $\mathbf{420}$ (2014), pp. 373--386.

\bibitem{gaunt ineq2} Gaunt, R. E.  Uniform bounds for expressions involving modified Bessel functions.  \emph{Math. Inequal. Appl.} $\mathbf{19}$ (2016), pp. 1003--1012.


\bibitem{gaunt diff} Gaunt, R. E. Derivative formulas for Bessel, Struve and Anger-Weber functions. \emph{J. Class. Anal.} $\mathbf{11}$ (2017), pp. 69--78.

\bibitem{gaunt ineq3} Gaunt, R. E. Inequalities for integrals of modified Bessel functions and expressions involving them.  \emph{J. Math. Anal. Appl.} $\mathbf{462}$ (2018), pp. 172--190.

\bibitem{gaunt pams} Gaunt, R. E. Inequalities for some integrals involving modified Bessel functions. \emph{Proc. Amer. Math. Soc.} $\mathbf{147}$ (2019), pp. 2937--2951.

\bibitem{gaunt vgii} Gaunt, R. E. Wasserstein and Kolmogorov error bounds for variance-gamma approximation via Stein's method I. \emph{J. Theoret. Probab.} $\mathbf{33}$ (2020), pp. 465--505.

\bibitem{gm20} Gaunt, R. E. and Merkle, M. On bounds for the mode and median of the generalized hyperbolic and related distributions. \emph{J. Math. Anal. Appl.} $\mathbf{493}$ (2021), Art.\ 124508, pp. 1--19.

\bibitem{gaunt ineq2020} Gaunt, R. E. Bounds for an integral of the modified Bessel function of the first kind and expressions involving it. \emph{J. Math. Anal. Appl.} $\mathbf{502}$ (2021), Art.\ 125216, pp. 1--16.

\bibitem{gaunt chi square} Gaunt, R. E., Pickett, A. M. and Reinert, G.  Chi-square approximation by Stein's method with application to Pearson's statistic.  \emph{Ann. Appl. Probab.} $\mathbf{27}$ (2017), pp. 720--756.

\bibitem{gaut} Gautschi, W. Some elementary inequalities relating to the gamma and incomplete gamma function. \emph{J. Math. Phys.} $\mathbf{38}$ (1959), pp. 77--81.





\bibitem{ifantis} Ifantis, E. K. and Siafarikas, P. D. Bounds for modified bessel functions. \emph{Rend. Circ. Mat. Palermo} $\mathbf{40}$ (1991), pp. 347--356.

\bibitem{jones} Jones, A. L.  An extension of an inequality involving modified Bessel functions.  \emph{J. Math. Phys.} $\mathbf{47}$ (1968), pp. 220--221.


\bibitem{kkp01} Kotz, S., Kozubowski, T. J. and Podg\'{o}rski, K. \emph{The Laplace Distribution and Generalizations: A Revisit with New Applications.} Springer, 2001. 

\bibitem{lefevre} Lef\`{e}vre, C. and Utev, S.  Exact norms of a Stein-type operator and associated stochastic orderings. \emph{Probab. Theory Rel.} $\mathbf{127}$ (2003),  pp. 353--366.

\bibitem{ley} Ley, C., Reinert, G. and Swan, Y.  Stein's method for comparison of univariate distributions. \emph{Probab. Surv.} $\mathbf{14}$ (2017), pp. 1--52.

\bibitem{lmx20}  Liao, Z.--W., Ma, Y. and Xia, A. On Stein's Factors for Poisson Approximation in Wasserstein Distance with Nonlinear Transportation Costs. To appear in \emph{J. Theoret. Probab.}, 2022+.


\bibitem{luk} Luk, H.  \emph{Stein's Method for the Gamma Distribution and Related Statistical Applications.}  PhD thesis, University of Southern California, 1994.

\bibitem{mg16} Mackey, L. and Gorham, J. Multivariate Stein factors for a class of strongly log-concave distributions. \emph{Electron. Commun. Probab.} $\mathbf{21}$ no$.$ 56 (2016), pp. 1--14.

\bibitem{madan2} Madan, D. B., Carr, P. and Chang, E. C.  The variance gamma process and option pricing. \emph{Eur. Finance Rev.} $\mathbf{2}$ (1998), pp. 74--105.

\bibitem{madan} Madan, D. B. and Seneta, E. The Variance Gamma (V.G.) Model for Share Market Returns.  \emph{J. Bus.} $\mathbf{63}$ (1990), pp. 511--524.

\bibitem{mt12} Maejima, M. and Tudor, C. A. Selfsimilar processes with stationary increments in the second Wiener chaos.
\emph{Probab. Math. Stat.-Pol.} $\mathbf{32}$ (2012), pp. 167--186.

\bibitem{nasell} N\r{a}sell, I.  Inequalities for Modified Bessel Functions.  \emph{Math. Comput.} $\mathbf{28}$ (1974), pp. 253--256. 

\bibitem{np09} Nourdin, I. and Peccati, G. Stein's method on Wiener chaos.  \emph{Probab. Theory Rel.} $\mathbf{145}$ (2009), pp. 75--118.

\bibitem{np10} Nourdin, I. and Peccati, G. Cumulants on the Wiener space. \emph{J. Funct. Anal.} $\mathbf{258}$ (2010), pp. 3775--3791.

\bibitem{np10b} Nourdin, I. and Peccati, G. Stein's method meets Malliavin calculus: A short survey with new estimates. In: \emph{Recent Development in Stochastic Dynamics and Stochastic Analysis}, World Scientific (2010), pp. 207--236.

\bibitem{np12} Nourdin, I. and Peccati, G. Normal approximations with Malliavin calculus: from Stein's method to universality. Vol. 192. Cambridge University Press, 2012. 

\bibitem{np15} Nourdin, I. and Peccati, G. The optimal fourth moment theorem. \emph{Proc. Amer. Math. Soc.} $\mathbf{143}$ (2015), pp. 3123--3133.

\bibitem{np05} Nualart, D. and Peccati, G. Central limit theorems for sequences of multiple stochastic integrals. \emph{Ann. Probab.} $\mathbf{33}$ (2005), 177--193.

\bibitem{olver} Olver, F. W. J., Lozier, D. W., Boisvert, R. F. and Clark, C. W.  \emph{NIST Handbook of Mathematical Functions.} Cambridge University Press, 2010.

\bibitem{pekoz1} Pek\"oz, E. and R\"ollin, A. New rates for exponential approximation and the theorems of R\'{e}nyi and Yaglom. \emph{Ann. Probab.} $\mathbf{39}$ (2011), pp. 587--608.


\bibitem{pekoz} Pek\"oz, E., R\"ollin, A. and Ross, N. Degree asymptotics with rates for preferential attachment random graphs. \emph{Ann. Appl. Probab.} $\mathbf{23}$ (2013), pp. 1188--1218.

\bibitem{penfold} Penfold, R., Vanden-Broeck, J.--M. and Grandison, S. Monotonicity of some modified Bessel function products. \emph{Integr. Transf. Spec. F.} $\mathbf{18}$ (2007), pp. 139--144.


\bibitem{pike} Pike, J. and Ren, H. Stein's method and the Laplace distribution. \emph{ALEA Lat. Am. J. Probab. Math. Stat.} $\mathbf{11}$ (2014), pp. 571--587.




\bibitem{roellin12} R\"ollin, A. On the optimality of Stein factors.  In \emph{Probability Approximations and Beyond, Lecture Notes in Statistics 205}, Springer, (2012), pp. 61--72.

\bibitem{ross} Ross, N. Fundamentals of Stein's method.  \emph{Probab. Surv.} $\mathbf{8}$ (2011), pp. 210--293.

\bibitem{soni} Soni, R. P.  On an inequality for modified Bessel functions. \emph{J. Math. Phys.} $\mathbf{44}$ (1965), pp. 406--407.

\bibitem{stein} Stein, C.  A bound for the error in the normal approximation to the the distribution of a sum of dependent random variables.  In \emph{Proc. Sixth Berkeley Symp. Math. Statis. Prob.} (1972), vol. 2, Univ. California Press, Berkeley, pp. 583--602.



\bibitem{t75} Taqqu, M. S. Weak convergence to fractional Brownian motion and to the Rosenblatt process. \emph{Probab.
Theory Rel.} $\mathbf{31}$ (1975), pp. 287--302.


\end{thebibliography}
\end{document}